\title{Galois representations, $(\varphi, \Gamma)$-modules and prismatic F-crystals}
\author{Zhiyou Wu}
\DeclareSymbolFontAlphabet{\mathbb}{AMSb} 
\DeclareSymbolFontAlphabet{\mathbbl}{bbold} 
\newcommand{\prism}{{\mathlarger{\mathbbl{\Delta}}}}
\newcommand*{\rom}[1]{\expandafter\@slowromancap\romannumeral #1@}
\newtheorem{proposition}{Proposition}[section]
\newtheorem{theorem}[proposition]{Theorem}
\newtheorem{example}[proposition]{Example}
\newtheorem{corollary}[proposition]{Corollary}
\newtheorem{definition}[proposition]{Definition}
\newtheorem{remark}[proposition]{Remark}
\newtheorem{lemma}[proposition]{Lemma}
\begin{document}

\maketitle

\begin{abstract}
We prove that both local Galois representations and $(\varphi,\Gamma)$-modules can be recovered from prismatic F-crystals, from which we obtain a new proof of the equivalence of Galois representations and $(\varphi,\Gamma)$-modules.
\end{abstract}

\section{Introduction}
 
The theory of $(\varphi, \Gamma)$-modules was developed by Fontaine (\cite{Fontaine2007}) to study local Galois representations. It plays an important role in the study of families of Galois representations and $p$-adic Langlands correspondence. 

The main result of the theory is that $(\varphi, \Gamma)$-modules are equivalent to Galois representations. The rough idea is to encode the difficult deeply ramified part of Galois theory into complicated rings. In other words, representations of complicated groups with simple coefficients are traded with representations of simple groups but with complicated coefficients. 

The key part of the theory is then the construction of these complicated coefficient rings. It is based on the theory of fields of norms, which is a machine to switch between characteristic 0 and characteristic $p$ worlds. There is another, probably more well-known, theory that serves the same purpose, namely perfectoid fields. Indeed, theory of fields of norms can be viewed as a deperfection of perfectoid fields. 
The coefficient rings appearing in $(\varphi, \Gamma)$-modules are certain infinitesimal lifting of fields of norms along the $p$-direction, in technical terms, they are Cohen rings of fields of norms. 

The insight of this work is to put these rings in a world in which they naturally live, namely the framework of prisms as developed by Bhatt and Scholze (\cite{2019arXiv190508229B}). More precisely, these mysterious rings have natural integral structures which can be viewed as prisms, and the rings themselves are viewed as a structure sheaf on the prismatic site. Then $(\varphi, \Gamma)$-modules are vector bundles (with extra structures) on the prismatic site. This perspective is useful since prismatic sites are very rich. In particular, we can encode infinitesimal lifting of perfectoid fields into the prismatic world as well, which links with Galois representations. We can deduce the classical equivalence of Galois representations and $(\varphi, \Gamma)$-modules from this perspective, namely they are both equivalent to a third, arguably more fundamental object, the prismatic F-crystals. In summary, we have

\begin{theorem}
Let $k$ be a perfect field of characteristic $p$, and $K$ be a finite extension of $W(k)[\frac{1}{p}]$. Then $(\varphi,\Gamma)$-modules over $\bold{A}_K$ are equivalent to prismatic F-crystals in $\mathcal{O}_{\prism}[\frac{1}{I_{\prism}}]^{\wedge}_p$-modules over $(\mathcal{O}_K)_{\prism}$.

Moreover, continuous finite free $\mathbb{Z}_p$-representations of the absolute Galois group $G_K$ are also equivalent to prismatic F-crystals in $\mathcal{O}_{\prism}[\frac{1}{I_{\prism}}]^{\wedge}_p$-modules over $(\mathcal{O}_K)_{\prism}$. 
\end{theorem}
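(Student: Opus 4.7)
The plan is to realise both equivalences by evaluating prismatic F-crystals in $\mathcal{O}_{\prism}[\frac{1}{I}]^{\wedge}_p$-modules on specific (covering) objects of $(\mathcal{O}_K)_{\prism}$, and then to match the resulting extra structures on each side. The classical equivalence between $(\varphi,\Gamma)$-modules and Galois representations then emerges as a corollary by composing the two evaluation functors.

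For the $(\varphi,\Gamma)$-module direction, I would first construct a natural object $\mathfrak{S}_\infty \in (\mathcal{O}_K)_{\prism}$ whose $\mathcal{O}_{\prism}[\frac{1}{I}]^{\wedge}_p$-value is precisely the coefficient ring $\mathbf{A}_K$. As suggested by the introduction, this prism should arise, via the Cohen-ring interpretation of the field of norms, from a Breuil–Kisin style prism attached to the cyclotomic tower $K_\infty/K$, and $\mathbf{A}_K$ appears as its $p$-completed étale (Frobenius-)localisation. The group $\Gamma = \mathrm{Gal}(K_\infty/K)$ then acts on $\mathfrak{S}_\infty$ by prismatic automorphisms, so evaluating an F-crystal $\mathcal{M}$ on $\mathfrak{S}_\infty$ produces a finite projective $\mathbf{A}_K$-module with a Frobenius-semilinear isomorphism (from the crystal structure) and a commuting continuous semilinear $\Gamma$-action — i.e.\ an étale $(\varphi,\Gamma)$-module.

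For the Galois representation direction, I would instead evaluate on a perfect prism: the object associated to $A_{\mathrm{inf}}(\widehat{\mathcal{O}_{\overline{K}}})$ equipped with its natural $G_K$-action. The value of $\mathcal{O}_{\prism}[\frac{1}{I}]^{\wedge}_p$ on this prism is $W(\mathbb{C}_K^\flat)$, and the $G_K$-equivariance of the evaluated F-crystal produces a finite projective étale $\varphi$-module over $W(\mathbb{C}_K^\flat)$ with a commuting continuous $G_K$-action. By Fontaine's classical equivalence (via the maximal unramified extension of $\mathbf{A}_K$ inside $W(\mathbb{C}_K^\flat)$), this is precisely the datum of a continuous finite free $\mathbb{Z}_p$-representation of $G_K$.

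The main obstacle in both halves is essential surjectivity: showing that every étale $(\varphi,\Gamma)$-module, respectively every $\mathbb{Z}_p$-representation of $G_K$, genuinely descends to a prismatic F-crystal on all of $(\mathcal{O}_K)_{\prism}$. Full faithfulness of the two evaluation functors should be the easier half, following once one knows that $\mathfrak{S}_\infty$ and $A_{\mathrm{inf}}$ are covers of the final object in the prismatic topos. For essential surjectivity, the plan is prismatic descent along these covers; the technical heart is to compute the self-products of $\mathfrak{S}_\infty$ and of $A_{\mathrm{inf}}$ inside $(\mathcal{O}_K)_{\prism}$ and to identify them with rings whose modules precisely encode a $\Gamma$- or $G_K$-action. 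Once this identification is in place, matching the cocycle condition coming from the crystal structure with the semilinear action data of a $(\varphi,\Gamma)$-module or Galois representation should reduce to classical computations of Fontaine, now reinterpreted inside the prismatic framework.
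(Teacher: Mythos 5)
Your overall strategy coincides with the paper's: evaluate F-crystals on the prism $(\bold{A}_K^+,(\phi^n([p]_q)))$, whose $\mathcal{O}_{\prism}[\frac{1}{I}]^{\wedge}_p$-value is $\bold{A}_K$, for the $(\varphi,\Gamma)$-side, and on $(A_{\text{inf}}(\mathcal{O}_{\mathbb{C}}),\text{Ker}(\theta))$ for the Galois side; both are covers of the final object, and the content is to identify descent data along them with semilinear group actions. However, there is a genuine gap at exactly what you call the technical heart. The self-product $(C,J)$ of $(\bold{A}_K^+,(\phi^n([p]_q)))$ over the final object is obtained by freely adjoining $\frac{\omega\otimes 1-1\otimes\omega}{\phi^n([p]_q)\otimes 1}$ and $\frac{1\otimes\phi^n([p]_q)}{\phi^n([p]_q)\otimes 1}$ to $\bold{A}_K^+\hat{\otimes}_{W(k)}\bold{A}_K^+$ as a $\delta$-ring and completing; this ring is not a ring of continuous functions on $\Gamma$, and no classical computation identifies modules over $C[\frac{1}{J}]^{\wedge}_p$ with $\Gamma$-equivariant objects. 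The missing idea is the paper's key theorem that for a bounded prism $(A,I)$ the base change $A[\frac{1}{I}]^{\wedge}_p\rightarrow((\underset{\phi}{\text{colim}}\ A)^{\wedge}_{(p,I)}[\frac{1}{I}])^{\wedge}_p$ induces an equivalence on \'etale $\varphi$-modules (proved by reducing mod $p$ to a comparison of finite \'etale sites of perfect Banach rings, via Kedlaya--Liu). Only after this reduction to perfections does the self-product become computable: its perfection corresponds, via the diamond formalism and $\text{Spd}(K,\mathcal{O}_K)=\text{Spa}(F,F^+)^{\diamond}/\Gamma$ with $F=K(\zeta_{p^{\infty}})^{\wedge}$, to $\text{Spa}(F,F^+)^{\diamond}\times\underline{\Gamma}$, yielding $C^0(\Gamma,W(F^{\flat}))$, and likewise $C^0(G_K,W(\mathbb{C}^{\flat}))$ on the Galois side. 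Without this perfection step your plan stalls at the descent-data identification.

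Two smaller points. First, one must choose $n$ so that $\mathcal{O}_K$ itself, and not merely $W(k)$ or some $\mathcal{O}_{K(\zeta_{p^k})}$, embeds into $\bold{A}_K^+/\phi^n([p]_q)$; this is what makes the prism an object of $(\mathcal{O}_K)_{\prism}$ and is not automatic. Second, invoking ``Fontaine's classical equivalence'' to conclude the Galois side risks circularity, since that equivalence is precisely what the theorem is meant to reprove; the fact actually needed there is only that \'etale $\varphi$-modules over $W(\mathbb{C}^{\flat})$ are equivalent to finite free $\mathbb{Z}_p$-modules via $F$-invariants because $\mathbb{C}^{\flat}$ is algebraically closed.
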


\begin{corollary}
The category of continuous finite free $\mathbb{Z}_p$-representations of the absolute Galois group $G_K$ is equivalent to the category of $(\varphi,\Gamma)$-modules over $\bold{A}_K$. 
\end{corollary}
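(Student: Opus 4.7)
The plan is to observe that the corollary follows formally from the theorem by composing two equivalences of categories. The theorem produces two independent equivalences with a common intermediate category: the category of continuous finite free $\mathbb{Z}_p$-representations of $G_K$ is equivalent to prismatic F-crystals in $\mathcal{O}_{\prism}[\frac{1}{I}]^{\wedge}_p$-modules over $(\mathcal{O}_K)_{\prism}$, and likewise so is the category of $(\varphi,\Gamma)$-modules over $\mathbf{A}_K$. Composing one equivalence with the inverse of the other yields the desired equivalence between Galois representations and $(\varphi,\Gamma)$-modules.

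Concretely, I would define the functor from $G_K$-representations to $(\varphi,\Gamma)$-modules as the composite: start with $T$, produce its associated prismatic F-crystal $\mathcal{E}(T)$ on $(\mathcal{O}_K)_{\prism}$ (using the part of the theorem that passes through the perfectoid / Fontaine-style prism associated to $\mathcal{O}_{\mathbb{C}_p}$), and then evaluate $\mathcal{E}(T)$ at the specific prism giving $\mathbf{A}_K$ (the Cohen ring of the field of norms, viewed prismatically) to obtain a module with its Frobenius and its $\Gamma$-action. Running the argument in reverse, or using quasi-inverses already produced in the theorem, gives the opposite direction.

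Since the entire substance is packaged in the theorem, there is essentially no remaining obstacle at the level of establishing an equivalence; the only thing to check is that the composite is functorial, which is automatic. The genuinely interesting verification, which I would carry out if one wanted more than an abstract equivalence, is that this composed functor agrees with Fontaine's classical construction. That comparison reduces to tracing through the two evaluations of a prismatic F-crystal at the perfectoid prism and at the field-of-norms prism, and identifying the resulting descent datum with the classical $(\varphi,\Gamma)$-module; but the corollary as stated requires only the abstract equivalence, so this comparison can be omitted.
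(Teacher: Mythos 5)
Your proposal is correct and matches the paper's own argument: the corollary is obtained exactly by composing the two equivalences of the theorem through the common intermediate category of prismatic F-crystals. The paper's later Corollary \ref{mxxxxxxx} additionally identifies the explicit functors (which you correctly note is not needed for the statement as given), by evaluating an F-crystal at the field-of-norms prism and at the $A_{\mathrm{inf}}$-prism, just as you sketch.
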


See the main text for explanations of the notation. 

The equivalence of Galois representations and prismatic F-crystals in $\mathcal{O}_{\prism}[\frac{1}{I_{\prism}}]^{\wedge}_p$-modules is also contained in the work of Bhatt and Scholze \cite{bhatt2021prismatic}. The  proof  for $(\varphi, \Gamma)$-modules follows the same line as the proof for Galois representations.

\subsection*{Acknowledgments}
I would like to thank Peter Scholze for helpful discussions and encouragement. I would like to thank Heng Du, Koji Shimiz, Yu Min and Liang Xiao for discussions on the initial draft. I would like to thank Tong Liu for pointing out a mistake in the early version of the paper. I would also like to  thank the anonymous referee for many suggestions any corrections.  I am  grateful to Max Planck Institute for Mathematics in Bonn for its hospitality and financial support.

\subsection*{Convention}

We follow the notation of \cite{2019arXiv190508229B}. Fixing a prime $p$, a $\delta$-ring is a $\mathbb{Z}_{(p)}$-algebra $R$ equipped with a map $\delta : R \rightarrow R$ such that $\delta(0)=\delta(1)=0$ satisfying \[
\delta(x+y) = \delta(x) +\delta(y) + \frac{x^p + y^p -(x+y)^p}{p}
\]
\[
\delta(xy)= x^p \delta(y)+y^p \delta(x) +p \delta(x) \delta(y)
\]
for any $x,y \in R$. We write 
\[
\phi(x) := x^p +p \delta(x)
\]
which is a ring homomorphism lifting the Frobenius. An element $x $ of a $\delta$-ring $R$ is called distinguished if $\delta(x)$ is a unit. 

Following the notation of Scholze,
for an integral perfectoid ring $R$, we denote by $R^{\flat} := \lim_{\phi} R/p$ 
the tilt of $R$. We can also identify $R^{\flat}$ with $\lim_{x \rightarrow x^p} R$ 
as a multiplicative monoid, then there exists a natural monoid map $R^{\flat} \rightarrow R$ given by 
\[
x=(x_0,x_1, \cdots) \rightarrow x^{\sharp} := x_0,
\]
where $x_{i+1}^p =x_i$, so $(x_0,x_1, \cdots)$ represents an element of $\lim_{x \rightarrow x^p} R$. As a standard notation in $p$-adic Hodge theory, we denote 
\[
A_{\text{inf}}(R) := W(R^{\flat}).
\]
There exists a canonical surjection of rings
\[
\theta: W(R^{\flat}) \rightarrow R
\]
characterized by $\theta([x])= x^{\sharp}$. Moreover, we know that $\text{Ker}(\theta)$ is principal and is generated by any distinguished element in the kernel, see \cite{2019arXiv190508229B} lemma 3.8, lemma 2.33 and lemma 2.24 for example. In particular, if $R$ contains a compatible system of $p$-power roots of unit $\zeta_{p^n}$, then 
\[
\epsilon:= (1, \zeta_p, \zeta_{p^2}, \cdots) \in R^{\flat}
\]
and $1+[\epsilon]^{\frac{1}{p}}+\cdots +[\epsilon]^{\frac{p-1}{p}}$
generates $\text{Ker}(\theta)$. 

We will use the theory of diamonds as developed in \cite{2017arXiv170907343S} and \cite{scholze2020berkeley} in a rudimentary way. Recall that a diamond is a sheaf on the  pro-étale site of characteristic $p$ perfectoid spaces which can be written as the quotient  of a representable sheaf by a  pro-étale equivalence relation. There is a functor from analytic adic spaces over $\text{Spa}(\mathbb{Z}_p)$ into diamonds, which sends
\[
X \longrightarrow X^{\diamond}
\]
where $X^{\diamond}$ is the sheaf whose value on a characteristic $p$ perfectoid space $S$ is the set of untilts $S^{\sharp}$ of $S$ together with a map $S^{\sharp} \rightarrow X$ of adic spaces. When $X=\text{Spa}(R,R^+)$ is affinoid, we sometimes denote 
\[
\text{Spd}(R,R^+) := \text{Spa}(R,R^+)^{\diamond}.
\]

When $A = R\otimes_S R$  and $M$ is an $R$-module, we will write 
\[
M \otimes_R A
\]
when $A$ is viewed as an $R$-algebra with respect to the first factor, i.e. the $R$-algebra structure is $R \rightarrow R\otimes_S R$ is $x \rightarrow x\otimes 1$. Similarly, we write 
\[
A\otimes_R M
\]
when $A$ is equipped with the $R$-algebra structure with respect to the second factor. The convention applies also to other situations when $A$ has two structure maps, such as $A = R \hat{\otimes}_S R $.

\section{Prisms} \label{prismsection}

We recall the basic theory of prisms as developed in \cite{2019arXiv190508229B}, and introduce the primary examples that will be relevant to us.

\subsection{Definitions and Examples}

\begin{definition}
A prism is a pair $(A,I)$, where $A$ is a $\delta$-ring, and $I$ is an ideal of $A$ such that $A$ is derived $(p,I)$-complete, $I$ defines a Cartier divisor on $Spec(A)$, and $p \in I+ \phi(I)A$. The category of prisms has objects the prisms, and the arrows are $\delta$-ring maps preserving the given ideals. 

A prism $(A,I)$ is called bounded if $A/I[p^{\infty}] = A/I[p^n]$ for some $n$. 
\end{definition}

\begin{definition}
Let $X$ be a $p$-adic formal scheme, then the (absolute) prismatic site $X_{\prism}$ of $X$ has objects bounded prisms $(A,I)$ together with a map of formal schemes $Spf(A/I) \rightarrow X$. The arrows are morphisms of prisms preserving the structure map to $X$. An arrow  $(A,I) \rightarrow (B,J)$ 
is a cover if $B$ is $(p,I)$-completely flat over $A$. 

When $X= Spf(R)$ is affine, we simplify the notation by writing $R_{\prism} := X_{\prism}$. 
\end{definition}

\begin{example}
Let $k$ be a perfect field of characteristic $p$, then 
\[
(W(k)[[q-1]], ([p]_q))
\]
is a prism in $W(k)_{\prism}$, where 
$[p]_q := \frac{q^p-1}{q-1}$, 
and the $\delta$-structure is given by the usual $\delta$-structure on $W(k)$ and $\delta(q)=0$.  It is clearly
$(p,q-1)$-complete, which is equivalent to being $(p,[p]_q)$-complete as $[p]_q \equiv p \ \text{mod} (q-1)$ and 
$[p]_q \equiv (q-1)^{p-1} \ \text{mod} \ p$. Moreover,  from 
$[p]_q \equiv p \ \text{mod} (q-1)$
we have 
\[
p = [p]_q + (q-1) \alpha
\]
for some $\alpha \in W(k)[[q-1]]$. Applying $\phi$ to both sides, we have 
\[
p = \phi([p]_q) + (q^p-1) \phi(\alpha) = \phi([p]_q) + [p]_q (q-1) \phi(\alpha),
\]
proving $p \in ([p]_q,\phi([p]_q))$. 
\end{example}

\begin{example}
Let $\mathbb{C}$ be the completion of algebraic closure of $W(k)$, and $\mathcal{O}_{\mathbb{C}}$ 
its ring of integers. Then 
$(W(\mathcal{O}_{\mathbb{C}}^{\flat}), Ker(\theta))$
is a prism, where 
$\theta: W(\mathcal{O}_{\mathbb{C}}^{\flat}) \rightarrow \mathcal{O}_{\mathbb{C}}$
is the canonical map characterized by 
$\theta([\alpha]) = \alpha^{\sharp}$. 
We choose a compatible system $\{\zeta_{p^n}\}$ of $p$-power roots in $\mathbb{C}$, and let 
\[
\epsilon := (1, \zeta_p, \zeta_{p^2}, \cdots) \in \mathcal{O}_{\mathbb{C}}^{\flat},
\]
then it is well-known that 
$1+ [\epsilon^{\frac{1}{p}}] + [\epsilon^{\frac{2}{p}}] + \cdots + [\epsilon^{\frac{p-1}{p}}]$
generates $Ker(\theta)$, and we have a map of $\delta$-rings 
\[
W(k)[[q-1]] \rightarrow  W(\mathcal{O}_{\mathbb{C}}^{\flat}) \]
by sending $q$ to $[\epsilon^{\frac{1}{p}}]$, 
then $[p]_q$ is mapped to 
$1+ [\epsilon^{\frac{1}{p}}] + [\epsilon^{\frac{2}{p}}] + \cdots + [\epsilon^{\frac{p-1}{p}}]$, 
and the condition 
$p \in (Ker(\theta), \phi(Ker(\theta)))$
follows from the same condition for the prism $(W(k)[[q-1]],([p]_q))$. 

Moreover, 
$W(\mathcal{O}_{\mathbb{C}}^{\flat})$
is obviously $(p, [\varpi])$-complete, where $\varpi \in \mathcal{O}_{\mathbb{C}}^{\flat}$
is any nonzero topologically nilpotent element of 
$\mathcal{O}_{\mathbb{C}}^{\flat}$. This implies
$(p, Ker(\theta))$-completeness 
since 
$1+ [\epsilon^{\frac{1}{p}}] + [\epsilon^{\frac{2}{p}}] + \cdots + [\epsilon^{\frac{p-1}{p}}] \
mod \ p$
is such a $\varpi$. 

\end{example}

We made use of an embedding 
$W(k)[[q-1]] \rightarrow  W(\mathcal{O}_{\mathbb{C}}^{\flat})$
sending $q$ to $[\epsilon^{\frac{1}{p}}]$ 
in the previous example. This is not standard, and from now on we view $W(k)[[q-1]]$ as embedded into $W(\mathcal{O}_{\mathbb{C}}^{\flat})$
using the embedding 
\[
W(k)[[q-1]] \rightarrow  W(\mathcal{O}_{\mathbb{C}}^{\flat})
\]
which sends $q$ to $[\epsilon]$. It is the $\phi$-twist of the previous embedding. 

We now recall the theory of fields of norms, see \cite{ASENS_1983_4_16_1_59_0} for details. Let $K$ be a finite totally ramified extension of $W(k)[\frac{1}{p}]$ contained in a fixed completed algebraically closure $\mathbb{C}$ of $W(k)[\frac{1}{p}]$, then we can associate the cyclotomic tower 
\[ K \subset K(\zeta_{p})
\subset K(\zeta_{p^2}) \subset \cdots 
\]
the field of norms 
$\bold{E}_K$, whose ring of integers $\bold{E}_K^+$ can be characterized as a subring of  $\mathcal{O}_{K(\zeta_{p^{\infty}})^{\wedge}}^{\flat}$, namely
\[
\bold{E}_K^+ = \{ (\alpha_n)_n \in  \text{lim} \ \mathcal{O}_{K(\zeta_{p^{\infty}})^{\wedge}}/p \ | \ \ \alpha_n \in \mathcal{O}_{K(\zeta_{p^n})} /p \text{ for n sufficiently large}
\} 
\subset \mathcal{O}_{K(\zeta_{p^{\infty}})^{\wedge}}^{\flat}.
\]
Then $\bold{E}_K^+$ is a complete discrete valuation ring of characteristic $p$, which by construction contains $\bold{E}_{W(k)[\frac{1}{p}]}^+$. Moreover, we know that $\bold{E}_K$ is a finite separable extension of $\bold{E}_{W(k)[\frac{1}{p}]}$. 

We can compute 
$\bold{E}_{W(k)[\frac{1}{p}]}^+$
explicitly as 
\[
\bold{E}_{W(k)[\frac{1}{p}]}^+ = k[[\epsilon-1]] \subset \mathcal{O}_{W(k)[\frac{1}{p}](\zeta_{p^{\infty}})^{\wedge}}^{\flat} \subset \mathcal{O}_{\mathbb{C}}^{\flat}.
\]
We observe that 
\[
W(k)((q-1)) := W(k)[[q-1]][\frac{1}{q-1}]^{\wedge}_p
\subset W(\mathbb{C}^{\flat})
\]
is a Cohen ring of 
$\bold{E}_{W(k)[\frac{1}{p}]}$. By the henselian property of 
$W(k)((q-1))$,
the extension 
\[
\bold{E}_{W(k)[\frac{1}{p}]} \subset \bold{E}_{K} \subset \mathbb{C}^{\flat}
\]
lifts canonically to 
\[
W(k)((q-1)) \subset
\bold{A}_K
\subset
W(\mathbb{C}^{\flat}) 
\]
for a Cohen ring $\bold{A}_K$ of $\bold{E}_K$. Then as $W(k)((q-1)) \subset W(\mathbb{C}^{\flat})$ is invariant under the lift of Frobenius $\phi$ on $W(\mathbb{C}^{\flat})$, so is $\bold{A}_K$   by naturality of $\bold{A}_K$ being extension of  $W(k)((q-1))$ inside $W(\mathbb{C}^{\flat})$. 

Let $\overline{W(k)[[q-1]]}^K$  be the   integral closure of $W(k)[[q-1]]$ in $\bold{A}_K$, and 
\[
\bold{A}_K^+ :=  (\overline{W(k)[[q-1]]}^K)^{\wedge}_p,
\]
i.e. the $p$-adic completion of $\overline{W(k)[[q-1]]}^K$.  We know that 
$W(k)[[q-1]]$ is invariant under the Frobenius, so is $\overline{W(k)[[q-1]]}^K$
\footnote{For $y\in \overline{W(k)[[q-1]]}^K$ satisfying $f(y)=0$ with $f = x^n +a_1x^{n-1} + \cdots + a_n$ a monic polynomial with $W(k)[[q-1]]$ coefficients, $\phi(y)$ satisfies $\phi(y)^n +\phi(a_1)\phi(y)^{n-1} + \cdots + \phi(a_n)=0$.}.
Passing to the $p$-adic completion, this equips  $\bold{A}_K^+$ with a $\delta$-ring structure such that the embedding $\bold{A}_K^+ \subset \bold{A}_K$
is a $\delta$-ring map. The inclusion 
$W(k)[[q-1]] \subset
\bold{A}_K^+$
enables us to view $[p]_q$ as elements of $\bold{A}_K^+ $. We have the following lemmas.

\begin{lemma}
The inclusion $\bold{A}_K^+ \subset \bold{A}_K$ induces an isomorphism
    \[
    \bold{A}_K^+/p \cong \bold{E}_K^+ \subset \bold{E}_K \cong \bold{A}_K/p.
    \]
\end{lemma}

\begin{proof}
    First we show that the natural map 
    $\bold{A}_K^+/p = \overline{W(k)[[q-1]]}^K/p \rightarrow \bold{A}_K/p$ is an inclusion. Suppose that $x\in \overline{W(k)[[q-1]]}^K$ is in the kernel, then 
    $x= py$ for some $y \in \bold{A}_K$.  If $x$ satisfies the minimal ploynomial equation
    $x^n +a_1x^{n-1} + \cdots + a_n=0$ 
    with $a_i \in W(k)[[q-1]]$, then we have that $y$ satisfies the minimal polynomial equation 
    $y^n +\frac{a_1}{p}x^{n-1} + \cdots + \frac{a_n}{p^n}=0$
    with coefficients in the fraction field of the complete discrete valuation ring $W(k)((q-1))$, but $y \in \bold{A}_K$ is integral over $W(k)((q-1))$ as it is an extension of complete DVRs, which tells us that $\frac{a_1}{p}x^{n-1}, \cdots, \frac{a_n}{p^n} \in W(k)((q-1))$. Further, $a_i \in w(k)[[q-1]]$ forces us that 
    $\frac{a_1}{p}x^{n-1}, \cdots, \frac{a_n}{p^n} \in W(k)[[q-1]]$, so $y \in \overline{W(k)[[q-1]]}^K$ proving the injectivity. 

   Next we note that by definition $\bold{A}_K^+/p$ is integral over  $W(k)[[q-1]]/p \cong k[[\epsilon-1]]$, so
\[
   \bold{A}_K^+/p \subset \bold{E}_K^+.
\]
 If $\bar{\alpha}\in \bold{E}_K^+$, and $\bar{f} \in k[[\epsilon-1]][x]$ is the minimal monic polynomial of $\bar{\alpha}$, then $f$ is separable as $\bold{E}_K$ is a separable extension of $k((\epsilon-1))$. We choose a monic lift $f \in W(k)[[q-1]][x]$ of $\bar{f}$, then Hensel's lemma tells us that $f$ has a root $\alpha$ in $\bold{A}_K$ 
that reduces to $\bar{\alpha}$, which means that $\alpha \in \overline{W(k)[[q-1]]}^K$ is a lift of $\bar{\alpha}$, so 
$ \bold{E}_K^+ \subset \bold{A}_K^+/p$.
\end{proof}

\begin{corollary}
    $\bold{A}_K^+$ is a Noetherian ring. 
\end{corollary}
\begin{proof}
    This follows immediately from \cite{stacks-project} tag 05GH.
\end{proof}

\begin{lemma}
For every $n \in \mathbb{N}$, 
$(\bold{A}_K^+,( \phi^n([p]_q) ))$
is a prism. 
\end{lemma}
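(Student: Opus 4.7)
The plan is to verify the four defining conditions of a prism for $(\bold{A}_K^+, (\phi^n([p]_q)))$: the $\delta$-ring structure, the Cartier divisor condition on the generating ideal, the distinguished condition $p \in (\phi^n([p]_q)) + \phi((\phi^n([p]_q))) \bold{A}_K^+$, and derived $(p, \phi^n([p]_q))$-completeness of $\bold{A}_K^+$.

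The $\delta$-structure on $\bold{A}_K^+$ has already been constructed in the discussion preceding the lemma, inherited from $W(\mathcal{O}_{\mathbb{C}}^{\flat})$ via the Frobenius-stability of $\bold{A}_K^+ \subset W(\mathcal{O}_{\mathbb{C}}^{\flat})$. For the Cartier divisor condition I would use that $\bold{A}_K^+ \subset W(\mathcal{O}_{\mathbb{C}}^{\flat})$ embeds into an integral domain, so the nonzero element $\phi^n([p]_q)$ is a nonzerodivisor in $\bold{A}_K^+$, making $(\phi^n([p]_q))$ an invertible (hence Cartier) ideal. For the distinguished condition, applying $\phi^n$ (which fixes $p$) to the relation $p \in ([p]_q, \phi([p]_q))$ already verified in $W(k)[[q-1]]$ in the first example yields $p \in (\phi^n([p]_q), \phi^{n+1}([p]_q))$ already inside the subring $W(k)[[q-1]] \subset \bold{A}_K^+$.

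The substantive point is derived $(p, \phi^n([p]_q))$-completeness. My first step would be to compare the $(p, \phi^n([p]_q))$-adic and $(p, q-1)$-adic topologies on $\bold{A}_K^+$. The standard congruence $[p]_q \equiv (q-1)^{p-1} \pmod{p}$, together with the fact that $\phi$ reduces mod $p$ to the $p$-th power Frobenius, gives $\phi^n([p]_q) \equiv (q-1)^{(p-1)p^n} \pmod{p}$; conversely the congruence $[p]_q \equiv p \pmod{q-1}$ iterates to $\phi^n([p]_q) \in (p, q-1)$. Hence the two ideals $(p, \phi^n([p]_q))$ and $(p, q-1)$ have the same radical and define the same ideal-adic topology on $\bold{A}_K^+$, reducing the task to classical $(p, q-1)$-completeness.

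The step I expect to require the most care is showing that $\bold{A}_K^+$ is $(p, q-1)$-adically complete. My approach is to establish that $\bold{A}_K^+$ is module-finite over $W(k)[[q-1]]$: the finite separable extension $\bold{E}_K / \bold{E}_{W(k)[1/p]}$ lifts (by henselianness and unramified lifting) to a finite étale extension $\bold{A}_K / W(k)((q-1))$ of the stated degree, and $\bold{A}_K^+ = \bold{A}_K \cap W(\mathcal{O}_{\mathbb{C}}^{\flat})$ coincides with the integral closure of $W(k)[[q-1]]$ in $\bold{A}_K$. This integral closure is module-finite because $W(k)[[q-1]]$ is a complete regular local ring and hence excellent. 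Since $W(k)[[q-1]]$ is itself $(p, q-1)$-adically complete and Noetherian, any finitely generated module over it is automatically $(p, q-1)$-adically complete, and classical completeness with respect to a finitely generated ideal implies derived completeness, which is what we need.
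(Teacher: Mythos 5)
Your proof is correct, and for the first three conditions it is essentially the paper's argument: the $\delta$-structure is inherited from $W(\mathcal{O}_{\mathbb{C}}^{\flat})$, the element $\phi^n([p]_q)$ is a nonzerodivisor because $\bold{A}_K^+$ sits inside the domain $W(\mathcal{O}_{\mathbb{C}}^{\flat})$, and $p\in(\phi^n([p]_q),\phi^{n+1}([p]_q))$ follows by applying $\phi^n$ to the relation $p=a[p]_q+b\phi([p]_q)$ in $W(k)[[q-1]]$. Where you genuinely diverge is the completeness step. The paper argues directly that $\bold{A}_K^+$ is $p$-complete by construction and that $\bold{A}_K^+/p\cong\bold{E}_K^+$ is a complete DVR in which $\phi^n([p]_q)\equiv(q-1)^{p^n(p-1)}$ is a pseudouniformizer, which immediately gives $(p,\phi^n([p]_q))$-completeness. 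You instead reduce (correctly, via the same congruences) to classical $(p,q-1)$-completeness and deduce that from module-finiteness of $\bold{A}_K^+$ over the complete Noetherian ring $W(k)[[q-1]]$. This route is sound, and the finiteness you invoke is consistent with what the paper itself asserts later (it calls $\bold{A}_K^+$ ``of finite type over $W(k)[[q-1]]$'' to conclude it is Noetherian); but it front-loads the two claims you state without proof, namely that $\bold{A}_K\cap W(\mathcal{O}_{\mathbb{C}}^{\flat})$ is exactly the integral closure of $W(k)[[q-1]]$ in $\bold{A}_K$ (which requires knowing, e.g., that $W(\mathcal{O}_{\mathbb{C}}^{\flat})$ is integrally closed in $W(\mathbb{C}^{\flat})$ and that elements of the intersection reduce into $\bold{E}_K^+$ mod $p$) and that this closure is module-finite. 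The paper's version trades these for the unproved but built-in facts that $\bold{A}_K^+$ is $p$-complete and that $\bold{A}_K^+/p\cong\bold{E}_K^+$; it is the shorter and more self-contained of the two, while yours has the side benefit of making the Noetherianity/finiteness of $\bold{A}_K^+$ over $W(k)[[q-1]]$ explicit, which the paper needs anyway in the subsequent flatness lemmas.
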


\begin{proof}
Being a subring of $\bold{A}_K$, 
$\bold{A}_K^+$ is an integral domain, so $ \phi^n([p]_q) $
is a non-zero divisor. Since $(W(k)[[q-1]],([p]_q))$ is a prism, we have  $p= a[p]_q + b\phi([p]_q)$
for some $a,b \in W(k)[[q-1]]$. Applying $\phi^n$ to it, we have 
$p = \phi^n(a) \phi^n([p]_q) + \phi^n(b) \phi^{n+1}([p]_q)$, proving $p \in (\phi^n([p]_q), \phi(\phi^n([p]_q)))$. Lastly, $\bold{A}_K^+$ is $p$-complete by construction and $ \bold{A}_K^+/p \cong \bold{E}_K^+$, which is a complete DVR. We have $\phi^n([p]_q)   \equiv (q-1)^{p^n(p-1)} \ \text{mod}\ p$, which is a pseudouniformizer in 
$\bold{E}_{W(k)[\frac{1}{p}]}^+ \cong k[[q-1]]$, hence a pseudouniformizer in $\bold{E}_K^+$. This proves that $\bold{A}_K^+$ is $(p, \phi^n([p]_q))$-complete, by \cite{stacks-project} tag 0DYC. 
\end{proof}

We record some simple algebraic  properties of the prism $(\bold{A}_K^+,( \phi^n([p]_q) ))$. 

\begin{lemma}
The canonical inclusion
$W(k)[[q-1]] \rightarrow \bold{A}_K^+$
is  faithfully flat. 
\end{lemma}

\begin{proof}
By remark 4.31 of \cite{Bhatt2018}, it is enough to prove flatness of the maps after reduction mod $p$, which is 
$k[[q-1]] \rightarrow \bold{E}^+_K$. This is an injective map from a DVR to an integral domain, hence flat.

To show it is faithfully flat, it is enough to show that for any finitely generated $W(k)[[q-1]]$-module $M$ such that $M \otimes_{W(k)[[q-1]]} \bold{A}_K^+ =0$, then $M=0$. We have 
\[
M/p\otimes_{W(k)[[q-1]]/p} \bold{A}_K^+/p  \cong ( M \otimes_{W(k)[[q-1]]} \bold{A}_K^+ )  \otimes_{\bold{A}_K^+} \bold{A}_K^+/p =0,
\]
but 
$W(k)[[q-1]]/p = k[[q-1]]\rightarrow \bold{E}_K^+=\bold{A}_K^+/p$ 
is faithfully flat as it is a local flat map between DVRs. Thus we have $M/p =0$, which implies that $M=0$ by Nakayama.
\end{proof}

\begin{corollary}
$\bold{A}_K^+/\phi^n([p]_q)$ is 
$p$-torsionfree, so the prism $(\bold{A}_K^+,( \phi^n([p]_q) ))$
is bounded. 
\end{corollary}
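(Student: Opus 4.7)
The plan is to verify directly that multiplication by $p$ is injective on $\bold{A}_K^+/\phi^n([p]_q)$; boundedness of the prism is then automatic, since $p$-torsionfreeness forces $A/I[p^\infty]=0$, which certainly equals $A/I[p^n]$ for $n=1$.

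For the injectivity I would start from an arbitrary relation $p x = \phi^n([p]_q)\, y$ with $x,y \in \bold{A}_K^+$ and aim to produce $y' \in \bold{A}_K^+$ with $x = \phi^n([p]_q)\, y'$. The key input has already been recorded in the proof of the preceding lemma: $\phi^n([p]_q) \equiv (q-1)^{p^n(p-1)} \bmod p$. Reducing the relation modulo $p$ and using $\bold{A}_K^+/p = \bold{E}_K^+$, it becomes $(q-1)^{p^n(p-1)}\,\bar{y}=0$ in $\bold{E}_K^+$. Since $\bold{E}_K^+$ is a complete DVR and $(q-1)^{p^n(p-1)}$ is a power of a pseudouniformizer (hence nonzero), we conclude $\bar{y}=0$, i.e.\ $y = p y'$ for some $y' \in \bold{A}_K^+$. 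Substituting back gives $p x = p\,\phi^n([p]_q)\,y'$, and cancelling $p$ --- legitimate because $\bold{A}_K^+$ is a subring of the domain $W(\mathcal{O}_{\mathbb{C}}^{\flat})$ --- yields $x = \phi^n([p]_q)\,y'$, as desired.

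A more structural variant, closer in spirit to the preceding lemma, would first verify in the regular local ring $W(k)[[q-1]]$ that $\phi^n([p]_q), p$ is a regular sequence (the joint quotient is the zero-dimensional ring $k[[q-1]]/(q-1)^{p^n(p-1)}$, so this is a system of parameters in a two-dimensional Cohen--Macaulay ring) and then transfer $p$-torsionfreeness of the quotient by $\phi^n([p]_q)$ along the flat map $W(k)[[q-1]] \to \bold{A}_K^+$ established above. There is no real obstacle in either route; the whole corollary rests on the single mod $p$ identity $\phi^n([p]_q) \equiv (q-1)^{p^n(p-1)}$ combined with the fact that $\bold{A}_K^+/p$ is a DVR, both of which are already in hand.
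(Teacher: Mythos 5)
Both routes you sketch are correct. The second, ``structural'' variant is essentially the paper's proof: the paper uses flatness of $W(k)[[q-1]]\to \bold{A}_K^+$ (the lemma immediately before the corollary) to base-change to a flat map $W(k)[[q-1]]/\phi^n([p]_q)\to\bold{A}_K^+/\phi^n([p]_q)$, and checks $p$-torsionfreeness of the source by identifying it explicitly as $W(k)[\zeta_{p^n}]$, whereas you verify the same thing by noting $(\phi^n([p]_q),p)$ is a system of parameters in the two-dimensional Cohen--Macaulay ring $W(k)[[q-1]]$ --- the same flat-transfer idea with a slightly different justification of the base case. Your first argument, by contrast, is a genuinely different and more elementary route: a direct element chase in the domain $\bold{A}_K^+$, using only that $\overline{\phi^n([p]_q)}=(q-1)^{p^n(p-1)}$ is a nonzero element (hence nonzerodivisor) of the DVR $\bold{E}_K^+=\bold{A}_K^+/p$ and that $p$ can be cancelled in a domain. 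What the element-chase buys you is independence from the flatness lemma; what the flat base-change argument buys you is a cleaner conceptual statement (regular sequences transfer along flat maps) that would generalize to settings where direct manipulation is less transparent. Your observation on boundedness --- $p$-torsionfreeness gives $A/I[p^\infty]=0=A/I[p]$ --- is exactly right and matches the implicit logic of the corollary.
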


\begin{proof}
We have that
\[
W(k)[[q-1]]/\phi^n([p]_q)  \rightarrow \bold{A}_K^+/\phi^n([p]_q) 
\]
is flat since it is the base change of 
$W(k)[[q-1]] \rightarrow \bold{A}_K^+$. We observe that 
\[
W(k)[[q-1]]/\phi^n([p]_q) \cong W(k)[\zeta_{p^n}],
\]
which is $p$-torsionfree. Then the flatness tells us that $\bold{A}_K^+/\phi^n([p]_q)$
is also $p$-torsionfree.
\end{proof}

\begin{lemma} \label{oooooo}
The map $\phi: \bold{A}^+_K \rightarrow \bold{A}^+_K$ is  faithfully flat.
\end{lemma}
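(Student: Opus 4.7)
The plan is to reduce the question modulo $p$, exactly as in the proof of the previous lemma, and then analyze the Frobenius on the field of norms directly.

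Since $\bold{A}_K^+$ is noetherian (being of finite type over $W(k)[[q-1]]$) and $p$-adically complete, I would invoke Remark 4.31 of \cite{Bhatt2018} to reduce flatness of $\phi:\bold{A}_K^+\to\bold{A}_K^+$ to flatness of its reduction modulo $p$. Under the identification $\bold{A}_K^+/p\cong\bold{E}_K^+$ recorded in the previous lemma, the Witt vector Frobenius reduces to the absolute Frobenius $F:x\mapsto x^p$ on $\bold{E}_K^+$, so it suffices to prove that $F$ is flat.

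Now $\bold{E}_K^+$ is a complete discrete valuation ring of characteristic $p$ whose residue field equals that of $\mathcal{O}_K$, namely $k$, which is perfect (here one uses that $K$ is totally ramified over $W(k)[\frac{1}{p}]$, so the residue field is unchanged along the cyclotomic tower). By Cohen's structure theorem one has $\bold{E}_K^+\cong k[[\pi]]$ for a choice of uniformizer $\pi$. Perfectness of $k$ lets me compute the image: since every $\sum_j b_j \pi^{pj}$ can be written as $F\bigl(\sum_j b_j^{1/p}\pi^j\bigr)$, the image $F(\bold{E}_K^+)$ is precisely $k[[\pi^p]]$. But then $\bold{E}_K^+=k[[\pi]]$ is free of rank $p$ over $k[[\pi^p]]$ with basis $1,\pi,\ldots,\pi^{p-1}$, and in particular $F$ is (finite) flat, which is what was needed.

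I do not expect any substantial obstacle: the strategy is a direct analog of the preceding lemma, with the perfectness of $k$ (inherited by the residue field of $\bold{E}_K^+$) being the key input that pins down the image of Frobenius and hence makes the rank $p$ free-module description possible.
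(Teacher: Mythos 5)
Your proposal is correct and shares its essential step with the paper's proof: both reduce, via Remark 4.31 of the cited reference (using that $\bold{A}_K^+$ is noetherian and $p$-adically complete), to flatness of the mod-$p$ Frobenius $F\colon \bold{E}_K^+ \to \bold{E}_K^+$. You diverge only in the last step: the paper disposes of it in one line by noting that $F$ is an injective endomorphism of a DVR, hence flat (a module over a DVR is flat iff torsion-free), whereas you identify $\bold{E}_K^+ \cong k'[[\pi]]$ via Cohen's structure theorem and use perfectness of the residue field to exhibit $\bold{E}_K^+$ as free of rank $p$ over the image of $F$. Your route gives the stronger conclusion that $F$ is finite flat of degree $p$, at the cost of invoking perfectness of the residue field; the paper's route is shorter and works for any injective endomorphism of a DVR. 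One small imprecision: the residue field of $\bold{E}_K^+$ is a priori only a finite extension of $k$ (total ramification of the cyclotomic tower over $K$ is not automatic), but any algebraic extension of a perfect field is perfect, so your argument is unaffected.
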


\begin{proof}
Again by remark 4.31 of \cite{Bhatt2018}, it is enough to show the flatness mod $p$, which is  the Frobenius  $\phi : \bold{E}^+_K \rightarrow \bold{E}^+_K$. It is an injective map of DVRs, so flat. 

To show it is faithfully flat, it is enough to show that for any finitely generated $\bold{A}_K^+$-module $M$ such that $M \otimes_{\bold{A}_K^+, \phi} \bold{A}_K^+ =0$, then $M=0$. We have 
$M/p\otimes_{\bold{A}_K^+/p, \phi} \bold{A}_K^+/p  \cong ( M \otimes_{\bold{A}_K^+, \phi} \bold{A}_K^+ )  \otimes_{\bold{A}_K^+} \bold{A}_K^+/p =0$, but 
$\phi: \bold{A}_K^+/p = \bold{E}_K^+ \rightarrow \bold{E}_K^+$ is faithfully flat as it is a local flat map between DVRs. Thus we have $M/p =0$, which implies that $M=0$ by Nakayama.
\end{proof}

\subsection{Perfect prisms and perfectoid rings}

There is an important class of prisms that has close connection with perfectoid rings. 

\begin{definition}
A prism $(A,I)$ is called perfect if $\phi$ is an automorphism of $A$. 
\end{definition}

We have a natural perfection functor for prisms. 

\begin{proposition}
(\cite{2019arXiv190508229B} lemma 3.8) Let $(A,I)$ be a prism, and
\[
A_{\text{perf}} := (\underset{\phi}{colim} \ A)^{\wedge}_{(p,I)},
\]
then $(A_{perf},IA_{perf})$ is a perfect prism. Moreover, it is the universal perfect prism over $(A,I)$.
\end{proposition}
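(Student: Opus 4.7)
The plan is to verify first that $A_{\mathrm{perf}}$ is a well-defined $\delta$-ring on which Frobenius is an automorphism, then check the three prism axioms, then establish the universal property. I would begin with the intermediate ring $B := \mathrm{colim}_\phi A$ (colimit in $\delta$-rings, which makes sense because $\phi$ is a $\delta$-ring endomorphism since $\phi$ commutes with $\delta$). On $B$ the Frobenius is tautologically an automorphism: its inverse is induced by the shift of the colimit diagram. The next step is to pass to the derived $(p,I)$-completion and check this preserves the $\delta$-structure and the automorphism property; the key input is that the category of $\delta$-rings is stable under derived completion with respect to a finitely generated ideal containing $p$, and classical $(p,I)$-completion agrees with derived completion once we know boundedness (which we will verify).

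Next I would verify the prism axioms for $(A_{\mathrm{perf}}, I A_{\mathrm{perf}})$. The condition $p \in I A_{\mathrm{perf}} + \phi(I A_{\mathrm{perf}}) A_{\mathrm{perf}}$ is inherited directly from $(A,I)$ by pushing forward the relation $p = a d + b \phi(d)$ (for any generator $d$ of $I$) through $A \to A_{\mathrm{perf}}$. The Cartier divisor condition requires showing that if $d \in I$ is a generator, then its image in $A_{\mathrm{perf}}$ is a non-zero-divisor. I would argue this using the fact that non-zero-divisors are preserved by filtered colimits with flat (in fact, rank-$1$ free) transition maps — multiplication by $d$ on each copy of $A$ is injective because $I$ is a Cartier divisor on $A$, and $\phi$-transition maps send $d$ to $\phi(d)$ which still generates a Cartier divisor in the target (using $p \in (d,\phi(d))$ to bound things, combined with the $\delta$-ring fact that $\phi(d)$ equals a unit times $d^p$ modulo $p$-something). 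Completion preserves this because the completion is computed derived and $A_{\mathrm{perf}}/d$ is the completion of $B/d$, which by the above is concentrated in degree zero. For boundedness, reduce mod $I$ and observe that $A_{\mathrm{perf}}/I A_{\mathrm{perf}}$ is a perfectoid ring, hence its $p^\infty$-torsion equals its $p$-torsion.

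For the universal property, suppose $(A,I) \to (C, IC)$ is a map of prisms with $(C,IC)$ perfect. Because Frobenius on $C$ is invertible, the composed maps $A \xrightarrow{\phi^n} A \to C$ glue to a $\delta$-ring map $B = \mathrm{colim}_\phi A \to C$, and this extension is unique. Since $C$ is $(p,IC)$-complete (being a bounded prism implies classical $(p,I)$-completeness), this map factors uniquely through the $(p,I)$-completion $A_{\mathrm{perf}}$, giving the universal property. The resulting map is automatically compatible with the ideals, so it is a map of prisms.

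The main obstacle I anticipate is the Cartier divisor check after completion: one must control how derived $(p,I)$-completion interacts with quotient by a generator $d$ of $I$, and verify that the completed quotient is discrete and $d$-torsion-free. This is where the boundedness of the original prism and the structural input that $d$ is distinguished play the essential role, and it is the step where invoking the general framework of derived completions from Bhatt–Scholze is unavoidable.
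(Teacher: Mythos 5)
The paper does not prove this proposition at all; it quotes it from \cite{2019arXiv190508229B}, Lemma~3.8, and uses it as a black box, so there is no in-paper argument to compare yours against. Your sketch reproduces the correct overall shape (form $B := \mathrm{colim}_\phi A$ as a $\delta$-ring, complete, verify the prism axioms, deduce the universal property from the universal properties of colimit and completion), but it contains one outright error and one genuine unresolved gap, the latter exactly where you flag it.

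The error is the assertion that the transition maps in $\mathrm{colim}_\phi A$ are ``flat (in fact, rank-$1$ free).'' The transition map is $\phi : A \to A$, a ring homomorphism; viewing $A$ as an $A$-module through $\phi$, it is certainly not free of rank one unless $\phi$ is already an isomorphism, and in general $\phi$ is not even flat. The actual reason $d$ remains a non-zero-divisor in $B$ is just exactness of filtered colimits applied to the compatible system of injections $\cdot\,\phi^n(d) : A \to A$; and the injectivity of each map in that system is a nontrivial input (that $\phi(I)$ is again generated by a non-zero-divisor, \cite{2019arXiv190508229B} Lemma~3.6) which you allude to but never invoke. The genuine gap is the passage through $(p,I)$-completion: you need to know that the derived $(p,I)$-completion of $B$ has $d$ a non-zero-divisor, equivalently that the derived completion of $B/d$ is discrete. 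This does not follow from what you have written; it requires the structural inputs that a derived $p$-complete perfect $\delta$-ring is $p$-torsion-free (hence equals $W$ of its perfect reduction) and that a distinguished element is regular in such a ring. Writing ``this is where invoking the general framework of derived completions from Bhatt--Scholze is unavoidable'' correctly identifies the crux, but it is not a proof of it. Finally, your boundedness argument (via $A_{\mathrm{perf}}/I$ being integral perfectoid) is sound, but it is logically downstream of the Cartier-divisor check: one only knows $A_{\mathrm{perf}}/I$ is integral perfectoid after the prism axioms for $(A_{\mathrm{perf}}, IA_{\mathrm{perf}})$ have been established, so it cannot be used to shore up that verification.
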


Perfect prisms are canonically equivalent to integral perfectoid rings as defined in \cite{Bhatt2018}. We recall the definition of integral perfectoid rings first. 

\begin{definition}
A ring $R$ is integral perfectoid if it is $\pi$-adically complete for some $\pi \in R$ such that $\pi^p$ divides $p$, the Frobenius on $R/p$ is surjective, and the canonical map 
$\theta: W(R^{\flat}) \rightarrow R$ 
has principal kernel. 
\end{definition}

The desired equivalence with perfect prisms is the following theorem. 

\begin{theorem}
(\cite{2019arXiv190508229B} theorem 3.9) The category of perfect prisms is equivalent to the category of integral perfectoid rings. The equivalence functors are $(A,I) \rightarrow A/I$, and $R \rightarrow (W(R^{\flat}), Ker(\theta))$.
\end{theorem}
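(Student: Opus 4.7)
The plan is to construct both functors, verify well-definedness, and exhibit them as mutually quasi-inverse. A key preliminary observation drives both directions: any perfect prism $(A,I)$ has $A$ canonically isomorphic to $W(A/p)$ as a $\delta$-ring with $A/p$ a perfect $\mathbb{F}_p$-algebra. Indeed, $\phi$ being an automorphism lifting Frobenius forces $p$ to be a non-zero-divisor, so derived $p$-completeness becomes classical $p$-completeness, and the standard equivalence between perfect $p$-complete $\delta$-rings and perfect $\mathbb{F}_p$-algebras then applies.

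For the functor $R \mapsto (W(R^\flat), \ker \theta)$: since $R^\flat$ is perfect, $W(R^\flat)$ is automatically a perfect $\delta$-ring, and $\ker \theta$ is principal by the definition of integral perfectoid. The only nontrivial prism axiom is $p \in \ker \theta + \phi(\ker \theta)$, which is equivalent to producing one distinguished generator of $\ker \theta$. This is the main technical step: one chooses a compatible system of $p$-power roots of a pseudo-uniformizer $\pi \in R$ with $\pi^p$ dividing $p$ to obtain $\varpi \in R^\flat$, and verifies by a direct $\delta$-calculation that a suitable element of $\ker \theta$ of the form $[\varpi]^p - p \cdot (\text{unit})$ has $\delta$-value a unit, hence is distinguished.

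For the reverse functor $(A,I) \mapsto A/I$: writing $I = (d)$ with $d$ distinguished, the congruence $\phi(d) \equiv d^p \pmod p$ combined with $p \in (d, \phi(d))$ exhibits in $A/I$ an element whose $p$-th power divides $p$, yielding the pseudo-uniformizer; Frobenius surjectivity on $(A/I)/p$ is inherited from $A/p$ being perfect. The canonical map $\theta_{A/I}: W((A/I)^\flat) \to A/I$ is then identified with the quotient $A \to A/I$ via the tilting isomorphism $A/p \cong (A/I)^\flat$, which sends $x \in A/p$ to the compatible system $(x, \phi^{-1}(x), \phi^{-2}(x), \ldots) \in \varprojlim_\phi A/(p, I)$ (well-defined since $A/p$ is perfect). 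This map is an isomorphism by a direct check using $(p, I)$-adic completeness and the principality of $I$, and in particular $\ker \theta_{A/I}$ corresponds to the principal ideal $(d)$, so $A/I$ is integral perfectoid.

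The quasi-inverse property then follows: $R \mapsto (W(R^\flat), \ker \theta) \mapsto R$ is the identity by the defining property of $\theta$, and $(A, I) \mapsto A/I \mapsto (W((A/I)^\flat), \ker \theta)$ recovers $(A, I)$ via the chain of identifications $A \cong W(A/p) \cong W((A/I)^\flat)$ with the ideals matching. The two main obstacles are the explicit construction of a distinguished generator of $\ker \theta$ for an arbitrary integral perfectoid ring, and the careful verification of the tilting identification $A/p \cong (A/I)^\flat$ for a perfect prism; both rely essentially on the flexibility afforded by the principal Cartier divisor structure of $I$ together with the perfectness of $A/p$.
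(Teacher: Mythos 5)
Note first that the paper offers no proof of this statement: it is recalled verbatim from \cite{2019arXiv190508229B} (Theorem 3.9 there) and used as a black box, so there is no in-paper argument to compare against. Your outline is a faithful reconstruction of the proof in that source --- the identification $A\cong W(A/p)$ for a perfect prism, the construction of a distinguished generator of $\ker\theta$, and the tilting identification $A/p\cong (A/I)^{\flat}$ are precisely its three pillars --- and I see no wrong turns. Two steps deserve more care than your sketch gives them: a pseudo-uniformizer $\pi$ with $\pi^p\mid p$ need not itself admit a compatible system of $p$-power roots, only $\pi u$ for a suitable unit $u$ does (this is the lemma the paper invokes as \cite{Bhatt2018} Lemma 3.9 in the proof of Proposition \ref{integralperfectoid}); and once a distinguished element of $\ker\theta$ is produced --- the correct shape is $d=p-[\varpi]^p\alpha$ with $\alpha$ a lift of some $a$ satisfying $\pi^p a=p$, rather than $[\varpi]^p$ minus $p$ times a unit --- one must still argue that $d$ \emph{generates} $\ker\theta$, which uses the assumed principality of $\ker\theta$ together with the multiplicative rigidity of distinguished elements (\cite{2019arXiv190508229B} Lemma 2.24), exactly the combination of lemmas the paper records in its Conventions section.
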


There is another notion of perfectoid rings used in the theory of perfectoid spaces. We recall the definition and compare it with integral perfectoid rings. Recall that a complete Tate ring is a complete Huber ring that contains a topological nilpotent unit. In more concrete terms, it is a complete topological ring $R$ which contains an open subring $R^+$ whose topology is  $\pi$-adic for some element $\pi \in R^+$, and $R= R^+[\frac{1}{\pi}]$. For any Huber ring $R$, we denote by $R^{\circ}$ the subring of power bounded elements.

\begin{definition}
A perfectoid Tate ring is a uniform complete Tate ring $R$, i.e. $R^{\circ}$ is bounded, such that there exists a topological nilpotent unit $\pi \in R^{\circ}$ such that $\pi^p$ divides $p$, and Frobenius is surjective on $R^{\circ}/\pi^p$. 
\end{definition}

We have the following comparison between the two notions of perfectoid rings. Recall that a ring of integral elements of a Huber ring $R$ is an open and integrally closed subring $R^+$ of $R^{\circ}$. 

\begin{proposition} \label{perfectoidtate}
(\cite{Bhatt2018} lemma 3.20)
Let $R$ be a complete Tate ring, and $R^+ \subset R$ be a ring of integral elements. Then $R$ is a perfectoid Tate ring  if and only if $R^+$ is bounded in $R$ and integral perfectoid.
\end{proposition}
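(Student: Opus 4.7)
I split the ``if and only if'' into two implications, using throughout that a ring of integral elements $R^+$ always sits inside $R^{\circ}$ and contains every topologically nilpotent element of $R$, so in particular the pseudouniformizer of $R$ lies in $R^+$.

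\emph{Direction $(\Rightarrow)$.} Suppose $R$ is perfectoid Tate with pseudouniformizer $\pi$. Uniformity of $R$ says $R^{\circ}$ is bounded, so the subring $R^+ \subseteq R^{\circ}$ is bounded too. Boundedness plus openness force the induced topology on $R^+$ to be $\pi$-adic, and completeness of $R = R^+[1/\pi]$ then translates into $\pi$-adic completeness of $R^+$; the divisibility $\pi^p \mid p$ is inherited. To obtain Frobenius surjectivity on $R^+/p$ from surjectivity on $R^{\circ}/\pi^p$, I argue that boundedness of $R^+$ together with the perfectoid structure force $R^+ = R^{\circ}$: any power-bounded element can be approximated by $p$-power roots via Frobenius surjectivity, and integral closedness of $R^+$ then places it in $R^+$. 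Principality of $\ker(\theta_{R^+})$ follows from the standard tilting arguments of \cite{Bhatt2018} and \cite{2019arXiv190508229B}, given the perfectoid structure now available on $R^+$.

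\emph{Direction $(\Leftarrow)$.} Conversely, let $R^+$ be bounded and integral perfectoid with pseudouniformizer $\pi$ satisfying $\pi^p \mid p$. Declaring $R^+$ open with its $\pi$-adic topology, $R = R^+[1/\pi]$ is a complete Tate ring with topologically nilpotent unit $\pi$. The remaining tasks are uniformity of $R$ and Frobenius surjectivity on $R^{\circ}/\pi^p$. The key step is $R^{\circ} \subseteq R^+$: given $x \in R^{\circ}$, successive use of Frobenius surjectivity on $R^+/\pi^p$ produces $y_n \in R^+$ with $y_n^{p^n}$ congruent to $x$ modulo increasingly high powers of $\pi$, and this, combined with $\pi^p \mid p$ and integral closedness of $R^+$ in $R$, forces $x$ into $R^+$. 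With $R^{\circ} = R^+$ established, both uniformity and Frobenius surjectivity on $R^{\circ}/\pi^p$ follow at once.

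\emph{Main obstacle.} In both directions the technical heart is the identification $R^{\circ} = R^+$. Boundedness of $R^+$ is clearly necessary, and sufficiency relies on the perfectoid structure: $p$-power root extraction from Frobenius surjectivity together with $\pi^p \mid p$ lets one upgrade ``power bounded'' to ``integral over $R^+$''. Everything else is formal manipulation of complete Tate rings and standard tilting arguments for perfectoid rings.
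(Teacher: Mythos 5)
The paper gives no argument for this proposition at all --- it is quoted verbatim from \cite{Bhatt2018}, Lemma 3.20 --- so the only question is whether your proof is correct. It is not: both of your directions are routed through the claim that $R^{+}=R^{\circ}$, and that claim is false. A ring of integral elements is only required to be an open, integrally closed subring of $R^{\circ}$ containing $R^{\circ\circ}$, and for a perfectoid Tate ring it can be strictly smaller than $R^{\circ}$: whenever the ``residue ring'' $R^{\circ}/R^{\circ\circ}$ admits a proper integrally closed subring (e.g.\ for a perfectoid field whose residue field is not algebraic over $\mathbb{F}_p$, or for perfectoid Tate algebras such as $\mathbb{Q}_p^{\mathrm{cyc}}\langle T^{1/p^{\infty}}\rangle$), one gets rings of integral elements $R^{+}\subsetneq R^{\circ}$, and the whole point of the proposition is that these are still integral perfectoid. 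Your argument for the inclusion $R^{\circ}\subseteq R^{+}$ also does not work as stated: producing $y_n$ with $y_n^{p^n}\equiv x$ modulo high powers of $\pi$ writes $x$ as an element of $R^{+}$ plus an error term, but it never exhibits $x$ as a root of a monic polynomial over $R^{+}$, which is what integral closedness would require; and in the forward direction the $y_n$ can only be found in $R^{\circ}$, not in $R^{+}$, so the induction cannot even start.

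The correct arguments avoid this identification entirely. For $(\Rightarrow)$, Frobenius surjectivity on $R^{+}/p$ is obtained pointwise: given $x\in R^{+}$, surjectivity of Frobenius on $R^{\circ}/\pi^{p}$ yields $y\in R^{\circ}$ with $y^{p}-x\in\pi^{p}R^{\circ}\subseteq R^{\circ\circ}\subseteq R^{+}$; then $y$ satisfies the monic equation $Y^{p}-(y^{p})=0$ with $y^{p}\in R^{+}$, so $y\in R^{+}$ by integral closedness. (Principality of $\ker\theta$ does require the structural input of \cite{Bhatt2018}, Lemmas 3.9--3.10, and cannot be dismissed as ``standard tilting''; also note that $\pi^{p}\mid p$ need not hold in $R^{+}$ for the given $\pi$ --- one must first replace $\pi$ by a suitable unit multiple admitting $p$-power roots.) For $(\Leftarrow)$, uniformity does not need $R^{\circ}=R^{+}$ either: one always has $\pi R^{\circ}\subseteq R^{\circ\circ}\subseteq R^{+}$, hence $R^{\circ}\subseteq\pi^{-1}R^{+}$, which is bounded because $R^{+}$ is; Frobenius surjectivity on $R^{\circ}/\pi^{p}$ is then deduced from that on $R^{+}$ by a similar rescaling-by-$\pi$ argument. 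I would recommend either citing \cite{Bhatt2018} as the paper does, or rewriting both directions along these lines.
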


The proposition characterizes integral perfectoid subrings of a perfectoid Tate ring. We can also build a perfectoid Tate ring from a integral perfectoid ring as in the following proposition. 

\begin{proposition} \label{integralperfectoid}
Let $R$ be an integral perfectoid ring and $\pi \in R$ be an element such that $R$ is $\pi$-adically complete and $\pi^p$ divides $p$, then
$R/Ann(\pi)$  is integral perfectoid, and $R[\frac{1}{\pi}]$ is a perfectoid Tate ring with ring of definition $R/Ann(\pi)$.

Similarly, for $R$ integral perfectoid, then   $R/Ann(p)$ is integral perfectoid and $R[\frac{1}{p}]$ is a perfectoid Tate ring with ring of definition $R/Ann_R(p)$.
\end{proposition}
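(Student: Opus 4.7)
The plan is to reduce both assertions to the converse direction of Proposition~\ref{perfectoidtate}, by realizing $A := R/\operatorname{Ann}_R(\pi)$ as a bounded ring of integral elements inside $R[\tfrac{1}{\pi}]$ that is itself integral perfectoid. Once this is accomplished, Proposition~\ref{perfectoidtate} immediately yields the perfectoid Tate structure on $R[\tfrac{1}{\pi}]$ with ring of definition $A$.

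The key technical step is the identification of $A$ with the image of $R$ inside $R[\tfrac{1}{\pi}]$. Since the localization map has kernel $\bigcup_{n} \operatorname{Ann}_R(\pi^n)$, this comes down to the annihilator stability $\operatorname{Ann}_R(\pi^n) = \operatorname{Ann}_R(\pi)$ for all $n \ge 1$. I would prove this by transporting the question to the tilt: the hypothesis $\pi^p \mid p$ supplies a canonical lift $\pi^\flat \in R^\flat$ (equivalently a compatible system of $p$-power roots of $\pi$), and annihilators in $R$ correspond to annihilators in $R^\flat$ via $\theta$. Since $R^\flat$ is a perfect $\mathbb{F}_p$-algebra hence reduced, the stability is automatic in the tilt (from $x^n y = 0$ one reads $(xy)^n = 0$, so $xy = 0$), and pushing back to $R$ produces the desired identity. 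The same circle of ideas shows that $A$ is itself integral perfectoid: $\pi$-adic completeness of $A$ follows because $\operatorname{Ann}_R(\pi)$ is closed in a Hausdorff topological ring, Frobenius surjectivity on $A/p$ is inherited from $R/p$, and the principality of $\ker(\theta_A)$ descends from that of $\ker(\theta_R)$ under the identification $A^\flat = R^\flat/\operatorname{Ann}_{R^\flat}(\pi^\flat)$.

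With these ingredients it is routine to verify that $A$ is open, bounded, and integrally closed in $R[\tfrac{1}{\pi}]^{\circ}$---boundedness because every element of $R[\tfrac{1}{\pi}]$ has the form $a\pi^{-N}$ with $a \in A$, and integral closedness because $A$ is $\pi$-adically Henselian---so that Proposition~\ref{perfectoidtate} closes out the first paragraph. The second paragraph is handled in parallel, applying the same scheme to any chosen $\varpi \in R$ with $\varpi^p \mid p$ (which exists by definition of an integral perfectoid ring): $\varpi$ serves as a topologically nilpotent pseudo-uniformizer for $R[\tfrac{1}{p}]$, and the corresponding annihilator-stability argument carried out in the tilt identifies $R/\operatorname{Ann}_R(p)$ with the image of $R$ in $R[\tfrac{1}{p}]$. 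The principal obstacle throughout is the annihilator-stability step: without the perfectoid structure one cannot collapse $\bigcup_n \operatorname{Ann}_R(\pi^n)$ to $\operatorname{Ann}_R(\pi)$, and this is precisely where the hypothesis does the essential work.
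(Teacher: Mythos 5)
Your overall shape — realize $A := R/\operatorname{Ann}_R(\pi)$ as a bounded, integral perfectoid ring of definition inside $R[\tfrac{1}{\pi}]$ and then invoke a Tate characterization — is the right one, and is essentially the paper's route. The difference is that the paper cites the needed structural facts directly: \cite{Bhatt2018} Lemma~3.9 to replace $\pi$ by $\pi u$ admitting a compatible system of $p$-power roots (a unit tweak which does not change the annihilator), \cite{2004math......9584G}~16.3.69 for the nontrivial fact that $R/\operatorname{Ann}_R(\pi u)$ is integral perfectoid and $\pi$-torsion-free, and then \cite{Bhatt2018} Lemma~3.21 to get the perfectoid Tate structure. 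You instead try to re-derive the content of the Gabber--Ramero input by hand, and that is where the proposal breaks down.

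The concrete gap is the claim that ``annihilators in $R$ correspond to annihilators in $R^{\flat}$ via $\theta$'' and the consequent identification $A^{\flat} = R^{\flat}/\operatorname{Ann}_{R^{\flat}}(\pi^{\flat})$. Neither is justified, and neither holds in the naive form you need. The map $\theta: W(R^{\flat}) \to R$ has a nontrivial (distinguished) kernel, so for $x \in R$ with $x\pi = 0$ there is no reason a Teichm\"uller-style lift of $x$ should annihilate $[\pi^{\flat}]$ in $W(R^{\flat})$, and $\sharp: R^{\flat} \to R$ is only multiplicative, not additive, so it does not transport ideals. Likewise, tilting is a limit construction and does not commute with quotients, so the asserted description of $A^{\flat}$ — which you then use to transfer principality of $\ker\theta$ — is not available. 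The observation that $R^{\flat}$ is perfect hence reduced is correct but idle until you have a genuine mechanism moving the torsion question between $R$ and $R^{\flat}$; that mechanism is precisely the substance of \cite{2004math......9584G}~16.3.69, and sketching ``transport to the tilt'' does not supply it. A secondary, smaller issue: for the Tate conclusion you only need $A$ to be a bounded ring of \emph{definition} (as the statement itself says, and as \cite{Bhatt2018} Lemma~3.21 delivers), so the excursion into integral closedness of $A$ via Henselianness is unnecessary and, as written, also unproved.

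Until the perfectoid-ness of $R/\operatorname{Ann}_R(\pi)$ is actually established — either by citing the Gabber--Ramero result as the paper does, or by a genuine proof — the argument does not close.
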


\begin{proof}
By \cite{Bhatt2018} lemma 3.9, there are units $u,v$ of $R$ such that both $\pi u$ and $pv$ has compatible systems of $p$-power roots in $R$. Then by \cite{2004math......9584G}  16.3.69, 
$R/\text{Ann}_R(\pi u)  $, resp. $R/\text{Ann}_R(p v)$,
is integral perfectoid without $\pi$-, resp. $p$-, torsion (the definition of perfectoid in \cite{2004math......9584G} is the same as being integral perfectoid, as \cite{Bhatt2018} remark 3.8 shows). Now 
$R[\frac{1}{\pi}] $, resp. $R[\frac{1}{p}]$,
is a perfectoid Tate ring with ring of definition $R/\text{Ann}_R(\pi u)$, resp. $R/\text{Ann}_R(\pi u)$, by \cite{Bhatt2018} lemma 3.21.  
\end{proof}

\begin{remark}
We have not excluded the zero ring in the proposition. For example, any perfect ring of characteristic $p$ is integral perfectoid with $\pi =0$, then the rings produced in the proposition are all zero. This tells us that in some sense the integral perfectoid rings are more general than being perfectoid Tate. For example, finite fields are integral perfectoid, but can not be nonzero (ring of integers of) perfectoid Tate in any way. 
\end{remark}

We can compute the perfection of the prism $(\bold{A}_K^+, (\phi^n([p]_q)))$.

\begin{lemma} \label{9900000000}
We have 
\[
(\bold{A}_K^+)_{perf} \cong W(\mathcal{O}_{K(\zeta_{p^{\infty}})^{\wedge}}^{\flat}).
\]
\end{lemma}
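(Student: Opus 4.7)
The plan is to construct a natural map from $(\bold{A}_K^+)_{perf}$ to $W(\mathcal{O}_{K(\zeta_{p^{\infty}})^{\wedge}}^{\flat})$ and then verify that it is an isomorphism by reducing mod $p$ and invoking the equivalence between perfect prisms and integral perfectoid rings. For the map, I would first observe that the embedding $\bold{A}_K^+ \hookrightarrow W(\mathcal{O}_{\mathbb{C}}^{\flat})$ factors through $W(\mathcal{O}_{K(\zeta_{p^{\infty}})^{\wedge}}^{\flat})$, since by its very construction $\bold{A}_K^+$ is the canonical lift to characteristic zero of $\bold{E}_K^+ \subset \mathcal{O}_{K(\zeta_{p^{\infty}})^{\wedge}}^{\flat}$. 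Because $\mathcal{O}_{K(\zeta_{p^{\infty}})^{\wedge}}$ is an integral perfectoid ring, its associated prism $(W(\mathcal{O}_{K(\zeta_{p^{\infty}})^{\wedge}}^{\flat}), \ker(\theta))$ is perfect; the universal property of perfection then produces a $\delta$-map $(\bold{A}_K^+)_{perf} \to W(\mathcal{O}_{K(\zeta_{p^{\infty}})^{\wedge}}^{\flat})$ (one checks that $\phi^n([p]_q)$ maps into $\ker(\theta)$ up to a unit in the perfect target, where all $\phi$-iterates of a generator of the kernel are associate).

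Next, by the Bhatt--Scholze equivalence recalled above, to show this map of perfect prisms is an isomorphism it suffices to check that it induces an isomorphism after taking quotients by the distinguished ideal. Equivalently, using that both sides are $p$-torsionfree and $(p, \phi^n([p]_q))$-complete, it suffices to show the map is an isomorphism modulo $p$. Modulo $p$ the source becomes the $(q-1)$-adic completion of $\operatorname{colim}_\phi \bold{E}_K^+$ (since $\phi^n([p]_q) \equiv (q-1)^{p^n(p-1)} \bmod p$ as computed in the prism lemma), while the target becomes $\mathcal{O}_{K(\zeta_{p^{\infty}})^{\wedge}}^{\flat}$.

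The remaining core assertion is therefore
\[
\bigl(\operatorname{colim}_\phi \bold{E}_K^+\bigr)^{\wedge}_{(q-1)} \;\cong\; \mathcal{O}_{K(\zeta_{p^{\infty}})^{\wedge}}^{\flat},
\]
which is a classical identification from the theory of fields of norms. I would establish it by noting that $\bold{E}_K^+$ is a complete discrete valuation ring, hence in particular $\pi$-adically complete for any uniformizer, and that applying $\phi^{-n}$ on the right-hand side amounts to extracting a $p^n$-th root of a pseudouniformizer. Thus the image of $\operatorname{colim}_\phi \bold{E}_K^+$ inside $\mathcal{O}_{K(\zeta_{p^{\infty}})^{\wedge}}^{\flat}$ is dense (its closure contains all $p$-power roots of a pseudouniformizer and is a perfect subring with the right residue field), and the map is injective because $\bold{E}_K^+$ is a domain and the Frobenius on the target is an automorphism. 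Taking completions gives the desired isomorphism.

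The main obstacle is the last step, because one has to carefully compare the $(q-1)$-adic topology on the perfection of the field of norms with the natural topology on $\mathcal{O}_{K(\zeta_{p^{\infty}})^{\wedge}}^{\flat}$, and to verify density. Once that identification is set up cleanly --- most efficiently by appealing to the standard description of tilts of perfectoid completions of cyclotomic towers --- the Witt vector functor and the perfect-prism/integral-perfectoid equivalence upgrade it automatically to the desired statement in characteristic zero.
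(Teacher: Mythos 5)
Your overall strategy---reduce to the isomorphism modulo $p$, where everything collapses to the classical field-of-norms identification $(\operatorname{colim}_\phi \bold{E}_K^+)^{\wedge} \cong \mathcal{O}_{K(\zeta_{p^{\infty}})^{\wedge}}^{\flat}$---is the same reduction the paper makes. The paper takes a slightly cleaner route: it invokes \cite{2019arXiv190508229B} corollary 2.31 (a $p$-complete perfect $\delta$-ring $A$ is canonically $W(A/p)$) to write $(\bold{A}_K^+)_{\text{perf}} \cong W((\bold{A}_K^+)_{\text{perf}}/p)$, then computes the reduction mod $p$ through the colimit and cites the field-of-norms fact directly, without ever needing to construct and then verify a candidate map. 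Your version requires setting up a map first, which is fine, but is extra work that the Witt-vector functoriality handles for free.

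One assertion in your construction is concretely false: ``all $\phi$-iterates of a generator of the kernel are associate'' in the perfect target. In a perfect prism $(A,(\xi))$, the elements $\xi, \phi(\xi), \phi^2(\xi),\dots$ generate pairwise distinct ideals (e.g.\ in $A_{\text{inf}}(\mathcal{O}_{\mathbb C})$, $\xi$ and $\phi(\xi)$ cut out $\theta$ and $\theta\circ\phi^{-1}$, which are genuinely different); if they were associate the prism structure would be $\phi$-stable, which is not the case. Consequently $\phi^n([p]_q)$ does \emph{not} land in $\ker(\theta)$ up to a unit, and the universal property of perfection cannot be applied with target $(W(\mathcal{O}_{K(\zeta_{p^{\infty}})^{\wedge}}^{\flat}),\ker(\theta))$. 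The fix is to use the $\phi$-twisted prism $(W(\mathcal{O}_{K(\zeta_{p^{\infty}})^{\wedge}}^{\flat}),(\phi^{n+1}(\xi)))$ as the target (this has the same underlying $\delta$-ring, which is all the lemma claims), or more simply to bypass the universal property of prisms altogether: the $\delta$-ring inclusion $\bold{A}_K^+ \hookrightarrow W(\mathcal{O}_{K(\zeta_{p^{\infty}})^{\wedge}}^{\flat})$ factors through $\operatorname{colim}_\phi \bold{A}_K^+$ because $\phi$ is an automorphism of the target, and then extends to the $(p,I)$-completion because $\phi^n([p]_q)\bmod p$ becomes a pseudouniformizer there. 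Finally, your sketch of the density argument for the field-of-norms identification (``closure contains all $p$-power roots of a pseudouniformizer and is a perfect subring with the right residue field'') is too thin to stand on its own; the paper just cites this as a classical result, and you should do the same or give a genuine proof.
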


\begin{proof}
Let $I=(\phi^n([p]_q))$. 
By \cite{2019arXiv190508229B} corollary 2.31,  we have
\[
(\bold{A}_K^+)_{\text{perf}} \cong
W((\bold{A}_K^+)_{\text{perf}} /p)
\cong
W((\underset{\phi}{\text{colim}} \ \bold{A}_K^+)^{\wedge}_{I}/p)
\cong
W((\underset{\phi}{\text{colim}} \ \bold{A}_K^+/p)^{\wedge}_{I})
\cong 
W((\underset{\phi}{\text{colim}} \ \bold{E}_K^+)^{\wedge}_I)
\]
where we use  lemma 10.96.1 (1) of Stacks project, and commutation of colimit with tensoring with $\mathbb{Z}/p$, in the third equality. 
From the theory of fields of norms, we know that
\[
(\underset{\phi}{\text{colim}} \ \bold{E}_K^+)^{\wedge} = \mathcal{O}_{K(\zeta_{p^{\infty}})^{\wedge}}^{\flat},
\]
where the completion is with respect to the natural valuation. We know that $\phi^n([p]_q) \equiv (q-1)^{p^n(p-1)} \  \text{mod} \ p$ 
is a pseudouniformizer in $k[[q-1]] \subset \bold{E}_K^+ $, hence the completion is the same as completion with respect to $I$. Then we have 
\[
(\bold{A}_K^+)_{\text{perf}} \cong 
W(\mathcal{O}_{K(\zeta_{p^{\infty}})^{\wedge}}^{\flat})
\]
as desired.
\end{proof}

\begin{corollary} \label{corollaryii}
The automorphism group of $(\bold{A}_K^+,(\phi^n([p]_{q})))$
in the category $Spf(\mathcal{O}_K)_{\prism}$
is $\Gamma=Gal(K(\zeta_{p^{\infty}})/K)$. 
\end{corollary}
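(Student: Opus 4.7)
The plan is to construct mutually inverse maps between $\Gamma$ and $\mathrm{Aut}_{(\mathcal{O}_K)_{\prism}}((\bold{A}_K^+,(\phi^n([p]_q))))$, using the identification $(\bold{A}_K^+)_{\mathrm{perf}} \cong W(\mathcal{O}_{K(\zeta_{p^{\infty}})^{\wedge}}^{\flat})$ from Lemma \ref{9900000000} to transport prism automorphisms to automorphisms of the cyclotomic tower.

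For the map $\Gamma \to \mathrm{Aut}$: each $\gamma \in \Gamma$ acts on $\mathcal{O}_{\mathbb{C}}$, hence on $\mathcal{O}_{\mathbb{C}}^{\flat}$ and, by functoriality of Witt vectors, on $W(\mathcal{O}_{\mathbb{C}}^{\flat})$, and this action commutes with $\phi$ and fixes $\mathcal{O}_K$. Writing $q=[\epsilon]$ and using $\gamma(\epsilon)=\epsilon^{\chi(\gamma)}$, the action restricts to $W(k)[[q-1]]$ by $q \mapsto q^{\chi(\gamma)}$; since $[p]_q=\Phi_p(q)$ cuts out the primitive $p$-th roots of unity, which $\gamma$ permutes, the ideal $([p]_q)$ is stable, and applying $\phi^n$ the same holds for $(\phi^n([p]_q))$. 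Stability of $\bold{A}_K^+$ under the action follows from the characterization of $\bold{A}_K$ as the Cohen ring lift of $\bold{E}_K$ inside $W(\mathbb{C}^{\flat})$, combined with the functoriality of the field of norms, which gives $\Gamma$ a natural action on $\bold{E}_K$ compatible with its action on $\mathcal{O}_{\mathbb{C}}^{\flat}$.

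For the inverse: given $\sigma \in \mathrm{Aut}$, the universal property of the perfection functor yields a unique $\delta$-ring automorphism $\tilde\sigma$ of $W(\mathcal{O}_{K(\zeta_{p^{\infty}})^{\wedge}}^{\flat})$ preserving the ideal $(\phi^n([p]_q))$. Quotienting by this ideal via the isomorphism $W(\mathcal{O}_{K(\zeta_{p^{\infty}})^{\wedge}}^{\flat})/(\phi^n([p]_q)) \cong \mathcal{O}_{K(\zeta_{p^{\infty}})^{\wedge}}$ induced by $\theta \circ \phi^{-n-1}$ produces a continuous ring automorphism $\bar\sigma$ of $\mathcal{O}_{K(\zeta_{p^{\infty}})^{\wedge}}$. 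The compatibility of $\sigma$ with the structure map $\mathcal{O}_K \to \bold{A}_K^+/(\phi^n([p]_q))$, which factors through the inclusion $\bold{A}_K^+/(\phi^n([p]_q)) \hookrightarrow \mathcal{O}_{K(\zeta_{p^{\infty}})^{\wedge}}$, forces $\bar\sigma$ to fix $K$ pointwise, so $\bar\sigma \in \mathrm{Gal}(K(\zeta_{p^{\infty}})^{\wedge}/K)=\Gamma$. The two constructions are mutually inverse because the injection $\bold{A}_K^+ \hookrightarrow W(\mathcal{O}_{K(\zeta_{p^{\infty}})^{\wedge}}^{\flat})$ lets us read off both maps from the same action on the ambient Witt ring.

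The main technical obstacle I expect is verifying $\Gamma$-stability of $\bold{A}_K^+$, which requires invoking the naturality of the field of norms / Cohen ring construction under Galois action rather than merely using that $\Gamma$ acts on $W(\mathcal{O}_{\mathbb{C}}^{\flat})$. A secondary subtlety is unambiguously identifying the prismatic structure map $\mathcal{O}_K \to \bold{A}_K^+/(\phi^n([p]_q))$ with the natural inclusion $\mathcal{O}_K \hookrightarrow \mathcal{O}_{K(\zeta_{p^{n+1}})}$ (sending $q \mapsto \zeta_{p^{n+1}}$), so that the $\mathcal{O}_K$-linearity condition on $\sigma$ genuinely descends to $\bar\sigma$ fixing $K$ in the completed cyclotomic field.
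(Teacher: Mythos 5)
Your proposal is correct and follows essentially the same route as the paper: pass to the perfection $(\bold{A}_K^+)_{\text{perf}} \cong W(\mathcal{O}_{K(\zeta_{p^{\infty}})^{\wedge}}^{\flat})$, use the equivalence between perfect prisms and integral perfectoid rings (together with the $\mathcal{O}_K$-structure) to read off $\Gamma$, then verify that $\Gamma$ genuinely acts on $(\bold{A}_K^+,(\phi^n([p]_q)))$ and use injectivity of $\bold{A}_K^+ \hookrightarrow (\bold{A}_K^+)_{\text{perf}}$ to conclude. The one place where the paper is more careful than your sketch is the stability of the ideal $(\phi^n([p]_q))$ under $\Gamma$: rather than appealing informally to ``$\gamma$ permutes the roots of $\Phi_p$,'' the paper writes $\gamma(\phi^n([p]_q)) = u \cdot \phi^n([p]_q)$ and checks by an explicit power-series computation that $u \in W(k)[[q-1]]^\times$, which is the clean way to make your argument rigorous.
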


\begin{proof}
Let $\gamma $ be an automorphism of $(\bold{A}_K^+,(\phi^n([p]_{q})))$, then $\gamma$ by definition is a $\delta$-ring morphism, and is continuous with respect to  $(p, \phi^n([p]_q))$-topology, hence $\gamma$ extends to an automorphism of $(\bold{A}_K^+,(\phi^n([p]_{q})))_{\text{perf}}$. By theorem 3.10 of \cite{2019arXiv190508229B}, the automorphism group of  $(\bold{A}_K^+,(\phi^n([p]_{q})))_{\text{perf}}$ 
as an abstract prism
is the same as the automorphism group of the corresponding integral perfectoid ring, which is $\mathcal{O}_{K(\zeta_{p^{\infty}})^{\wedge}}$ by the proposition. The automorphism of $(\bold{A}_K^+,(\phi^n([p]_{q})))_{\text{perf}}$ as objects of $\text{Spf}(\mathcal{O}_K)_{\prism}$ 
is then the $\mathcal{O}_K$-algebra automorphism of $\mathcal{O}_{K(\zeta_{p^{\infty}})^{\wedge}}$, so 
\[
\gamma \in \text{Aut}(\mathcal{O}_{K(\zeta_{p^{\infty}})^{\wedge}}/\mathcal{O}_K) = \Gamma.
\]
But we observe that $\Gamma$ already acts on $(\bold{A}_K^+,(\phi^n([p]_{q})))$. The action on $\bold{A}_K^+$ is clear from its construction, and we need to check that it preserves the ideal $(\phi^n([p]_{q}))$. 

First,
\[
\gamma(q) = q^{\alpha} = (1+q-1)^{\alpha}= \sum_{i=0}^{\infty} \binom{\alpha}{i} (q-1)^i
\]
for some $\alpha \in \mathbb{Z}_p^{\times}$,
where $\binom{\alpha}{i} := \frac{\alpha \cdot (\alpha-1) \cdots (\alpha-i+1)}{i!}$. This follows from the definition $q=[\epsilon]$, and $\alpha$ is the value at $\gamma$ of the cyclotomic character. Then 
\[
\gamma(\phi^n([p]_q)) = \frac{q^{\alpha p^{n+1}}-1}{q^{\alpha p^n}-1}= \frac{q^{p^n}-1}{q^{\alpha p^n}-1}\times \frac{q^{\alpha p^{n+1}}-1}{q^{p^{n+1}}-1} \times \phi^n([p]_q)
\]
and we claim that 
$\frac{q^{p^n}-1}{q^{\alpha p^n}-1}\times \frac{q^{\alpha p^{n+1}}-1}{q^{p^{n+1}}-1}$
is a unit, which is what we want to prove. This follows from the computation \[
\frac{q^{p^n}-1}{q^{\alpha p^n}-1}= (\alpha + \sum_{i=1}^{\infty} \binom{\alpha}{i+1} (q^{p^n}-1)^i)^{-1}=
\alpha^{-1} + (q-1) (\cdots) 
\]
\[
\frac{q^{\alpha p^{n+1}}-1}{q^{p^{n+1}}-1}=
\alpha + \sum_{i=1}^{\infty} \binom{\alpha}{i+1} (q^{p^{n+1}}-1)^i
= \alpha + (q-1)(\cdots) 
\]
which are units in $W(k)[[q-1]]$ as $\alpha \in \mathbb{Z}_p^{\times}. $

Now the corollary follows from the fact that $\bold{A}_K^+ \rightarrow (\bold{A}_K^+)_{\text{perf}}$
is injective ($\phi $ is injective as $\bold{A}_K^+$ is a $\delta$-subring of the perfect $\delta$-ring $W(\mathbb{C}^{\flat})$).
\end{proof}

Moreover, we have the following proposition.

\begin{proposition} \label{bbbb}
There exists $n \in \mathbb{N}$ such that 
\[
(\bold{A}_K^+, (\phi^n([p]_q))) \in (\mathcal{O}_K)_{\prism},
\]
i.e. there exists a map 
$\mathcal{O}_K \rightarrow \bold{A}_K^+/\phi^n([p]_q)$.
\end{proposition}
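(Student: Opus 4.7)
Plan. The plan is to realize $\bold{A}_K^+/\phi^n([p]_q)$, for $n$ sufficiently large, as the ring of integers $\mathcal{O}_{K(\zeta_{p^{n+1}})}$ sitting inside $\mathcal{O}_{\mathbb{C}}$; the required structure map is then furnished by the natural inclusion $\mathcal{O}_K \subset \mathcal{O}_{K(\zeta_{p^{n+1}})}$.

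First I would introduce the Frobenius-twisted theta map $\tilde\theta_n := \theta \circ \phi^{-(n+1)} \colon W(\mathcal{O}_{\mathbb{C}}^{\flat}) \to \mathcal{O}_{\mathbb{C}}$. Under the embedding $q = [\epsilon]$, the identity $\phi^n([p]_q) = \phi^{n+1}(\xi)$ (with $\xi = 1 + [\epsilon^{\frac{1}{p}}] + \cdots + [\epsilon^{\frac{p-1}{p}}]$ generating $\ker \theta$) shows $\ker \tilde\theta_n = (\phi^n([p]_q))$. Combined with the flatness lemma proved above, this yields an injection $\bold{A}_K^+/\phi^n([p]_q) \hookrightarrow \mathcal{O}_{\mathbb{C}}$ whose image, by standard properties of Cohen rings and the inclusion $\bold{A}_K^+ \subset W(\mathcal{O}_{K(\zeta_{p^{\infty}})^{\wedge}}^{\flat})$, lies in $\mathcal{O}_{K(\zeta_{p^{\infty}})^{\wedge}}$.

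Next I analyze the image modulo $p$. On a Teichmüller lift $[(\alpha_m)_m]$, $\tilde\theta_n \bmod p$ is simply the $(n+1)$-th coordinate projection $(\alpha_m)_m \mapsto \alpha_{n+1}$. By the Fontaine-Wintenberger characterization of $\bold{E}_K^+ = \bold{A}_K^+/p$ recalled in the excerpt and the finite generation of $\bold{E}_K^+$ over $\bold{E}_{W(k)[\frac{1}{p}]}^+$, I can pick $n$ uniformly large enough so that the reductions of all generators land in $\mathcal{O}_{K(\zeta_{p^{n+1}})}/p$. Moreover, by the field-of-norms theory, the $(n+1)$-th component of a uniformizer of $\bold{E}_K^+$ is a uniformizer of $\mathcal{O}_{K(\zeta_{p^{n+1}})}$ for $n$ large, so the image of $\tilde\theta_n \bmod p$ actually equals $\mathcal{O}_{K(\zeta_{p^{n+1}})}/p$.

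Finally I bootstrap this mod-$p$ identification to characteristic zero. Write $M := \tilde\theta_n(\bold{A}_K^+) \subset \mathcal{O}_{K(\zeta_{p^{\infty}})^{\wedge}}$. Since $\bold{A}_K^+/\phi^n([p]_q)$ is finite free of rank $r = [\bold{E}_K : \bold{E}_{W(k)[\frac{1}{p}]}]$ over $W(k)[\zeta_{p^{n+1}}]$, the fraction field $L := M[\frac{1}{p}]$ is an $r$-dimensional subfield of $K(\zeta_{p^{\infty}})^{\wedge}$; Ax's theorem forces $L \subset K(\zeta_{p^{\infty}})$, and a $p$-adic successive-approximation argument (using the mod-$p$ equality and $p$-adic completeness of $\mathcal{O}_L$) shows $\mathcal{O}_{K(\zeta_{p^{n+1}})} \subset \mathcal{O}_L$, which combined with equality of dimensions (valid for $n$ large, since $[K(\zeta_{p^{n+1}}) : W(k)[\frac{1}{p}](\zeta_{p^{n+1}})]$ stabilizes to $r$) gives $L = K(\zeta_{p^{n+1}})$. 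A second approximation argument, exploiting that $\mathcal{O}_{K(\zeta_{p^{n+1}})}$ is $p$-saturated in $\mathcal{O}_{K(\zeta_{p^{\infty}})^{\wedge}}$ and that $M$ is $p$-adically complete, upgrades the inclusion $M \subset \mathcal{O}_{K(\zeta_{p^{n+1}})}$ to equality. The hardest part is this final step: two $p$-torsion-free $\mathcal{O}_{K_0(\zeta_{p^{n+1}})}$-orders inside $\mathcal{O}_{K(\zeta_{p^{\infty}})^{\wedge}}$ can agree modulo $p$ without being equal, so the $p$-saturation property of the ambient ring of integers must be used carefully to close the argument.
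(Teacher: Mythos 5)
Your approach is genuinely different from the paper's, and more ambitious: you aim to identify $\bold{A}_K^+/\phi^n([p]_q)$ with $\mathcal{O}_{K(\zeta_{p^{n+1}})}$ outright via the twisted map $\theta\circ\phi^{-(n+1)}$, whereas the paper only ever produces the embedding $\mathcal{O}_K \hookrightarrow \bold{A}_K^+/\phi^n([p]_q)$. The paper's route is: compute $(\bold{A}_K^+)_{\text{perf}}/I \cong \mathcal{O}_{K(\zeta_{p^{\infty}})^{\wedge}}$, deduce by an integrality-plus-completeness argument that $\underset{\phi}{\text{colim}}\ \bold{A}_K^+/\phi^i([p]_q) \cong \mathcal{O}_{K(\zeta_{p^{\infty}})}$ with injective transition maps, and then use that $\mathcal{O}_K$ is finite over $W(k)$ to conclude it lands in a finite stage. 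The individual quotients are never identified, and this is deliberate.

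The gap in your argument sits exactly at the step the paper avoids: the claim that the image of $\tilde\theta_n \bmod p$ equals $\mathcal{O}_{K(\zeta_{p^{n+1}})}/p$ for $n\gg 0$. Your justification --- that a uniformizer of $\bold{E}_K^+$ projects to a uniformizer of $\mathcal{O}_{K(\zeta_{p^{n+1}})}$ --- only shows the image contains the monogenic order $W(k)[\zeta_{p^{n+1}}][\pi_{n+1}]/p$. This equals $\mathcal{O}_{K(\zeta_{p^{n+1}})}/p$ only if $K(\zeta_{p^{n+1}})/W(k)[\frac{1}{p}](\zeta_{p^{n+1}})$ is totally ramified and the order generated by a uniformizer is maximal; neither is automatic (total ramification of $K/W(k)[\frac{1}{p}]$ does not persist after adjoining $\zeta_{p^{n+1}}$ --- the residue field can grow, as for $K=\mathbb{Q}_3(\sqrt{3})$ with $p=3$). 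Surjectivity onto $\mathcal{O}_{K(\zeta_{p^{n+1}})}/p$ for $n\gg 0$ is true, but it is precisely the quantitative content of the Fontaine--Wintenberger theory (stabilization of differents in the cyclotomic tower, i.e.\ almost \'etale descent), which you would have to import wholesale rather than cite as a one-line remark; moreover your identification $L=K(\zeta_{p^{n+1}})$ already presupposes $K\subset L$, which circles back to the same surjectivity. Once that input is granted, your endgame is fine and in fact easier than you make it: $M\subseteq \mathcal{O}_{K(\zeta_{p^{n+1}})}$ with $M$ $p$-adically complete and $M+p\,\mathcal{O}_{K(\zeta_{p^{n+1}})}=\mathcal{O}_{K(\zeta_{p^{n+1}})}$ gives equality by successive approximation, with no delicate saturation issues. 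But as written the central surjectivity is an assertion, not a proof, and it cannot be waved through.
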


\begin{remark}
$\bold{A}_K^+$ depends only on $K(\zeta_{p^{\infty}})$. In other words, $\bold{A}_K^+$ cannot see the difference between $K(\zeta_{p^k})$ and $K$, and this is taken care by the choice of $\phi^n([p]_q)$. 
\end{remark}

\begin{proof}
Let $I=([p]_q)$, 
we have by lemma \ref{9900000000}
\[
(\bold{A}_K^+)_{\text{perf}}/I \cong 
W(\mathcal{O}_{K(\zeta_{p^{\infty}})^{\wedge}}^{\flat})/I 
\cong 
\mathcal{O}_{K(\zeta_{p^{\infty}})^{\wedge}},
\]
where the last isomorphism follows from
and $\phi$ being an automorphism and  $\text{Ker}(\theta) = (\phi^{-1}([p]_q))$, for 
$\theta :W(\mathcal{O}_{K(\zeta_{p^{\infty}})^{\wedge}}^{\flat}) \twoheadrightarrow \mathcal{O}_{K(\zeta_{p^{\infty}})^{\wedge}}$. 
On the other hand, we also have
\[
(\bold{A}_K^+)_{\text{perf}}/I \cong
(\underset{\phi}{\text{colim}} \ \bold{A}_K^+)^{\wedge}_{(p,I)}/I
\cong
(\underset{\phi}{\text{colim}} \ \bold{A}_K^+)^{\wedge}_{p}/I
\cong
(\underset{\phi}{\text{colim}} \ \bold{A}_K^+/\phi^i(I))^{\wedge}_p
\]
by definition of $(\bold{A}_K^+)_{\text{perf}}$, hence
\[
(\underset{\phi}{\text{colim}} \ \bold{A}_K^+/\phi^i(I))^{\wedge}_p
\cong
\mathcal{O}_{K(\zeta_{p^{\infty}})^{\wedge}},
\]
and we claim that this implies that
\begin{equation} \label{jjjjj}
\underset{\phi}{\text{colim}} \ \bold{A}_K^+/\phi^i(I) \cong 
\mathcal{O}_{K(\zeta_{p^{\infty}})}.
\end{equation}

Observe that $\bold{A}_K^+/\phi^i(I)$ is integral over $W(k)$, so
$\underset{\phi}{\text{colim}} \ \bold{A}_K^+/\phi^i(I)$
is also integral over $ W(k)$. 
Since there is no non-trivial integral extension of 
$\mathcal{O}_{K(\zeta_{p^{\infty}})}$
in its completion
$\mathcal{O}_{K(\zeta_{p^{\infty}})^{\wedge}}$, 
$\underset{\phi}{\text{colim}} \ \bold{A}_K^+/\phi^i(I)$, 
being integral over a subring of $\mathcal{O}_{K(\zeta_{p^{\infty}})}$, has to be contained in $\mathcal{O}_{K(\zeta_{p^{\infty}})}$. We look at the short exact sequence of $(\underset{\phi}{\text{colim}} \ \bold{A}_K^+/\phi^i(I))$-modules
\[
0 \longrightarrow 
\underset{\phi}{\text{colim}} \ \bold{A}_K^+/\phi^i(I)
\longrightarrow 
\mathcal{O}_{K(\zeta_{p^{\infty}})}
\longrightarrow
\mathcal{O}_{K(\zeta_{p^{\infty}})}/\underset{\phi}{\text{colim}} \ \bold{A}_K^+/\phi^i(I)  =: M
\longrightarrow 0.
\]
From
\[
(\underset{\phi}{\text{colim}} \ \bold{A}_K^+/\phi^i(I))^{\wedge}_p
\cong
\mathcal{O}_{K(\zeta_{p^{\infty}})^{\wedge}}
\]
and Stacks project lemma 10.96.1,
we have $M^{\wedge}_p =0$. The identification after completion also implies that 
$\underset{\phi}{\text{colim}} \ \bold{A}_K^+/\phi^i(I)$
and 
$\mathcal{O}_{K(\zeta_{p^{\infty}})}$
have the same fraction field,  in other words, $M$ is $p$-torsion. Further, we observe that 
$\underset{\phi}{\text{colim}} \ \bold{A}_K^+/\phi^i(I)$ 
contains 
\[
\underset{\phi}{\text{colim}} \ W(k)[[q-1]]/\phi^i([p]_q) \cong 
W(k)[\zeta_{p^{\infty}}]
\]
over which 
$\mathcal{O}_{K(\zeta_{p^{\infty}})}$
is finite. This implies that $M$ is a finitely generated $(\underset{\phi}{\text{colim}} \ \bold{A}_K^+/\phi^i(I))$-module, which,  together with being $p$-torsion, tells us that $M$ is killed by $p^k$ for some $k$, so $M$ is $p$-adically complete. Then we have 
$M=M^{\wedge}_p = 0$, proving the claim.

Now as $\mathcal{O}_K \subset \mathcal{O}_{K(\zeta_{p^{\infty}})}$ is finite over $W(k)$, 
(\ref{jjjjj}) implies that $\mathcal{O}_K$ factorizes through $\mathcal{O}_K \rightarrow \bold{A}_K^+ /\phi^n(I)$ 
for some $n$. Indeed, let $\mathcal{O}_K=W(k)[z]/f(z)$ for some polymonial $f$ with coefficients in $W(k)$. Then $z$, as an element of $\mathcal{O}_{K(\zeta_{p^{\infty}})} = \underset{\phi}{\text{colim}} \ \bold{A}_K^+/\phi^i(I)$, 
lifts to an element $z_1\in \bold{A}_K^+ /\phi^n(I)$ for some $n>0$, and the relation $f(z)=0$ in the colimit forces that $\phi^m(f(z_1))=0$ in the ring $\bold{A}_K^+/\phi^{n+m}(I)$ for some $m$, which implies that $\mathcal{O}_K \rightarrow \bold{A}_K^+/\phi^{n+m}(I)$. Note that we have used the fact that $\mathcal{O}_K \cong \mathcal{O}_K \otimes_{W(k), \phi^n} W(k) \cong W(k)[x]/\phi^n(f)(x)$, where the first isomorphism follows from the perfectness of $W(k)$.
\end{proof}

\begin{remark}
We have used freely in the above proof the fact that there is no algebraic extension of nonarchimedean fields in their completion. More precisely, let $L$ be an algebraic extension of $W(k)[\frac{1}{p}]$, then there is no nontrivial algebraic extension in its completion $L^{\wedge}$. Equivalently, completion induces an equivalence between algebraic extensions $M/L$ of $L$ and algebraic extension  $M^{\wedge}/L^{\wedge}$ of $L^{\wedge}$.  This follows immediately from the fact that $\mathbb{C}^{\text{Gal}(\bar{L}/M)} = M^{\wedge}$, where $\mathbb{C}$ is a completed algebraic closure of $L$. 
\end{remark}

We extract  the  following  lemma from the proof, which will be useful later. 

\begin{lemma} \label{www.oo}
    The map $\mathcal{O}_K \longrightarrow \bold{A}_K^+/\phi^{n}([p]_q)$ is flat. 
\end{lemma}

\begin{proof}
Since $\mathcal{O}_K$ is a valuation ring, it is enough to show that every non-zero element of $\mathcal{O}_K$ is not a zero-divisor of 
$\bold{A}_K^+/\phi^{n}([p]_q)$.
We observe that 
    \[
    \mathcal{O}_K \rightarrow
\mathcal{O}_{K(\zeta_{p^{\infty}})}
\cong 
\underset{\phi}{\text{colim}} \ \bold{A}_K^+/\phi^i([p]_q) 
    \]
    is flat, so if $x\in \mathcal{O}_K \setminus 0$ satisfies $xy=0$ for some $y \in \bold{A}_K^+/\phi^{n}([p]_q)$, then $y$ defines a zero-divisor of $x$ in the colimit, which has to be $0$ by the flatness of $\mathcal{O}_K $ over 
$\mathcal{O}_{K(\zeta_{p^{\infty}})}$. Thus
    $ \phi^k(y)=0$ for  some $k$, which implies that $y=0$ by the faithfully flatness of $\phi$ established in lemma \ref{oooooo}. 
 \end{proof}

\section{ Prismatic F-crystals}

We recall the definition of prismatic F-crystals and make explicit an example that is relevant for us. Recall that we have a natural structure sheaf of $\delta$-rings $\mathcal{O}_{\prism}$ on $X_{\prism}$, together with an ideal sheaf $I_{\prism}$.  

\begin{definition}
Let $X$ be a $p$-adic formal scheme, and $\mathcal{R}$ be  $\mathcal{O}_{\prism}[\frac{1}{I_{\prism}}]^{\wedge}_p$, the $p$-adic completion of the structure sheaf $\mathcal{O}_{\prism}$ with (locally) a generator of $I_{\prism}$ inverted. A prismatic $F$-crystal on $X$ in $\mathcal{R}$-modules is a finite locally free $\mathcal{R}$-module $\mathcal{M}$ over $X_{\prism}$ such that 
\[
\mathcal{M} (A,I) \otimes_{\mathcal{R}(A,I)} \mathcal{R}( B,IB) \cong \mathcal{M} (B,IB)
\]
for any arrow $(A,I) \rightarrow (B,IB)$ in $X_{\prism}$, together with an isomorphism
\[
F: \phi^*\mathcal{M} [\frac{1}{I_{\prism}}] \cong \mathcal{M}[\frac{1}{I_{\prism}}]
\]
of $\mathcal{R}$-modules.
\end{definition}

A concrete way to work with prismatic F-crystals in $\mathcal{O}_{\prism}[\frac{1}{I_{\prism}}]^{\wedge}_p$-modules is to choose a cover $(A,I)$ of the final object $*$ of the topos $X_{\prism}$, suppose that $(A,I) \times_{*} (A,I)$ is representable, and
\[
(B,J):= (A,I) \times_{*} (A,I),
\]
 then a prismatic F-crystal in $\mathcal{O}_{\prism}[\frac{1}{I_{\prism}}]^{\wedge}_p$-modules is a finite projective $\varphi$-$A[\frac{1}{I}]^{\wedge}_p$-module $M$ together with an isomorphism 
\[
\beta: 
M \otimes_{A[\frac{1}{I}]^{\wedge}_p} B[\frac{1}{J}]^{\wedge}_p \cong B[\frac{1}{J}]^{\wedge}_p \otimes_{A[\frac{1}{I}]^{\wedge}_p} M
\]
of $\varphi$-$B[\frac{1}{J}]^{\wedge}_p$-modules
satisfying cocycle conditions. Indeed, it is obvious that we can obtain such an object from a prismatic $F$-crystal. Conversely, if we are given such data, we can build a prismatic $F$-crystal $\mathcal{M}$ as follows. Given a prism $(C,K) \in X_{\prism}$, since $(A,I)$ covers the final object, there exists a cover $(C',K')$ of $(C,K)$, which also lies over $(A,I)$. Then we can define
\[
\mathcal{M}(C,K) := \text{Eq}( M \otimes_{A[\frac{1}{I}]^{\wedge}_p} C'[\frac{1}{K'}]^{\wedge}_p  
\rightrightarrows
 M \otimes_{A[\frac{1}{I}]^{\wedge}_p} C''[\frac{1}{K''}]^{\wedge}_p  )
\]
where 
$(C'',K'') := (C',K') \times_{(C,K)} (C',K'),$
which lives over $(B,J)$, and the two arrows comes from the base change descent data
$\beta \otimes_{B[\frac{1}{J}]^{\wedge}_p} C''[\frac{1}{K''}]^{\wedge}_p $. Now by 
the next proposition, the descent data is effective on finite projective modules, so $\mathcal{M}(C,K)$ is a finite projective module over $C[\frac{1}{K}]^{\wedge}_p$. 

\begin{proposition}
Let $(A,I)\rightarrow (B,IB)$ be a cover in the category of bounded prisms, and $(B^{\bullet}, I B^{\bullet})$
be the corresponding Čech nerve, then we have an equivalence of categories
\[
\text{Vect}(A[\frac{1}{I}]^{\wedge}_p) \overset{\sim}{\rightarrow} \underset{\leftarrow}{\lim}(Vect(B[\frac{1}{I}]^{\wedge}_p) \rightrightarrows Vect(B^2[\frac{1}{I}]^{\wedge}_p) \substack{\longrightarrow \\[-1em] \longrightarrow \\[-1em] \longrightarrow} \cdots),
\]
where Vect(R) denotes the category of finite projective modules over the ring $R$.  
\end{proposition}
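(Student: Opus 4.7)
The plan is to reduce to classical faithfully flat descent by working modulo $p^n$ and then passing to the limit, exploiting $p$-adic completeness of all rings in sight.

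Let $R = A[1/I]^\wedge_p$ and $S = B[1/I]^\wedge_p$, with analogous notation $S^n$ for the $n$-fold $(p,I)$-completed tensor of $B$ over $A$, then localized at $I$ and $p$-adically completed. The key structural input is that $R \to S$ is $p$-completely faithfully flat. By hypothesis $(A,I) \to (B,IB)$ is a cover, so $A \to B$ is $(p,I)$-completely faithfully flat; since $I$ becomes a unit in $R$ and $S$, inverting $I$ collapses this to $p$-completely faithful flatness of $R \to S$. Boundedness of the prism guarantees that classical and derived $p$-adic completions agree, so no hidden higher $\operatorname{Tor}$ terms appear, and consequently for each $n \geq 1$ the map $R/p^n \to S/p^n$ is genuinely faithfully flat in the classical sense.

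For each fixed $n$, I would then apply classical faithfully flat descent for finite projective modules (Stacks project tag 023N, or SGA I Exp.~VIII) to the faithfully flat map $R/p^n \to S/p^n$ together with its Čech nerve, obtaining
\[
\text{Vect}(R/p^n) \overset{\sim}{\to} \underset{\leftarrow}{\lim}\bigl(\text{Vect}(S/p^n) \rightrightarrows \text{Vect}(S^2/p^n) \to \cdots \bigr).
\]
Next, I would invoke the standard fact that for a bounded $p$-adically complete ring $T$ the category $\text{Vect}(T)$ is equivalent to $\varprojlim_n \text{Vect}(T/p^n)$ via reduction modulo $p^n$, with quasi-inverse given by $p$-adic completion of the inverse limit. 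Applying this to $R$ and to each $S^k$ and interchanging the cofiltered limit in $n$ with the cosimplicial limit (which commute, as the latter is levelwise a finite limit in small degrees and is in any case formal) produces the desired equivalence.

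The main obstacle is the first step: one must verify that the Čech nerve $(B^\bullet, IB^\bullet)$ formed in the category of bounded prisms, after inverting $I$ and $p$-adically completing, genuinely coincides with the nerve built from $R \to S$ via $p$-completed tensor products over $R$. This is largely bookkeeping, but it is precisely where boundedness of the prism is used to prevent classical and derived completions from diverging and to guarantee that the $p$-complete flat descent framework really does reduce to classical descent mod $p^n$.
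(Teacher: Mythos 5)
Your overall strategy---reduce mod $p^n$ using $p$-completeness of all the rings, descend there, and pass back up the tower---matches the paper's, and your flagged ``bookkeeping'' issue (identifying the prismatic \v{C}ech nerve after inverting $I$ and $p$-completing with the completed tensor powers) is exactly what the paper handles via its lemma on classical versus derived completions. However, there is a genuine gap at the heart of your argument: the claim that $R/p^n \to S/p^n$, i.e.\ $A/p^n[\frac{1}{I}] \to B/p^n[\frac{1}{I}]$, is \emph{classically} faithfully flat does not follow from $(p,I)$-complete faithful flatness of $A \to B$. That hypothesis controls the derived reduction modulo $(p,I)$ (the ``special fibre''), and says nothing about the behaviour of the map after $I$ is \emph{inverted} (the ``generic fibre''); boundedness of the prism only ensures agreement of classical and derived completions, not flatness of the localized map. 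So classical faithfully flat descent (Stacks 023N) is not available, and this is precisely the hard point of the proposition.

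The paper circumvents this by noting that $A/p^n \to B/p^n$ is $I$-completely faithfully flat and then invoking Mathew's theorem 7.8 of arXiv:1912.10968, which asserts descent for finite projective modules over $A/p^n[\frac{1}{I}]$ along the \v{C}ech nerve of an $I$-completely faithfully flat map $A/p^n \to B/p^n$ --- without requiring the localized map to be flat. That theorem (a form of descent on generic fibres of $I$-completely flat covers, in the spirit of $v$-descent) is the essential nontrivial input your proposal is missing. If you replace your appeal to classical descent mod $p^n$ with this result, the rest of your outline (commuting the limit over $n$ with the cosimplicial limit, which is formal since limits commute with limits) goes through.
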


\begin{proof}
Since $A[\frac{1}{I}]^{\wedge}_p$ (resp. $B^{\bullet}[\frac{1}{I}]^{\wedge}_p$) is $p$-complete, by \cite{stacks-project}  tag 0D4B, $Vect(A[\frac{1}{I}]^{\wedge}_p)$ (resp. $ Vect(B^{\bullet}[\frac{1}{I}]^{\wedge}_p$)) is equivalent to $\underset{n}{\lim} \ Vect(A[\frac{1}{I}]/p^n)$ (resp. $\underset{n}{\lim} \ Vect(B^{\bullet}[\frac{1}{I}]/p^n)$). Thus it suffices to prove the equivalence
\begin{equation} \label{uroieuioreurioe}
\text{Vect}(A/p^n[\frac{1}{I}]) \overset{\sim}{\rightarrow} \underset{\leftarrow}{\lim}(Vect(B/p^n[\frac{1}{I}]) \rightrightarrows Vect(B^2/p^n[\frac{1}{I}]) \substack{\longrightarrow \\[-1em] \longrightarrow \\[-1em] \longrightarrow} \cdots).
\end{equation}

We know from the proof of  \cite{2019arXiv190508229B}
corollary 3.12 that 
$B^2$ is the derived $(p,I)$-completion of $B\otimes_A^{\mathbb{L}}B$, which is proved in $loc. \ cit.$ to be discrete and classically $(p,I)$-complete. Then lemma \ref{xieior} shows that 
\[
B^2 = (B\otimes_A B)^{\wedge}_{(p,I)},
\]
i.e. it is the classical $(p,I)$-completion of $B\otimes_A B$, and similarly for $B^{\bullet}$. Indeed, by the discreteness and classical $(p,I)$-completeness of $\widehat{B\otimes_A^{\mathbb{L}}B}$, the derived $(p,I)$-completion of $B\otimes_A^{\mathbb{L}}B$, we have 
\[
B^2 =
H^0(\widehat{B\otimes_A^{\mathbb{L}}B})= H^0(\widehat{B\otimes_A^{\mathbb{L}}B})^{\wedge}_{(p,I)} = H^0(B\otimes_A^{\mathbb{L}}B)^{\wedge}_{(p,I)} 
=(B\otimes_A B)^{\wedge}_{(p,I)},
\]
where the third equality follows from lemma \ref{xieior}.
Then we have 
\[
B^2/p^n = (B/p^n \otimes_{A/p^n} B/p^n)^{\wedge}_{I}
\]
\[
B^3/p^n = (B/p^n \otimes_{A/p^n} B/p^n \otimes_{A/p^n} B/p^n)^{\wedge}_{I}
\]
and similarly for $B^{\bullet}/p^n$. Since $A \rightarrow B$ is assumed to be $(p,I)$-completely faithfully flat, $A/p^n \rightarrow B/p^n$ is $I$-completely faithfully flat, and (\ref{uroieuioreurioe})
follows from \cite{2019arXiv191210968M}
theorem 7.8.
\end{proof}

\begin{lemma} \label{xieior}
Let $R$ be a ring and $I$ be a finitely generated ideal of $R$. Let $K \in D^{\leq 0}(R)$
be a complex of $R$-modules with zero cohomology in positive degrees, and $\hat{K}$ its derived $I$-completion, then we have a canonical identification
\[
H^0(\hat{K})^{\wedge}_I \cong H^0(K)^{\wedge}_I,
\]
in other words, the classical $I$-completion of $H^0(\hat{K})$ is the same as the classical $I$-completion of $H^0(K)$. 

\end{lemma}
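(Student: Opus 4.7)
The plan is to reduce the statement to the fact that the derived and classical $I$-completions of a connective complex agree after reducing modulo each power of $I$, and then to extract $H^0$ and pass to the limit.

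Let $F$ denote the fibre of the natural map $K \to \hat{K}$, giving an exact triangle $F \to K \to \hat{K}$ in $D(R)$. Because derived $I$-completion is idempotent, we have $\hat{F} = 0$; equivalently, $F$ is derived $I$-acyclic, meaning $\text{Hom}_{D(R)}(F, N) = 0$ for every derived $I$-complete object $N$. The first main step is to show that $F \otimes^{\mathbb{L}}_R R/I^n = 0$ in $D(R)$ for every $n \geq 1$. For any $X \in D(R)$, the complex $R\text{Hom}_R(R/I^n, X)$ is derived $I$-complete, since $R/I^n$ is derived $I$-torsion and $R\text{Hom}(T, -)$ from a derived $I$-torsion object lands in the derived complete subcategory. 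By the tensor-Hom adjunction,
\[
\text{Hom}_{D(R)}\bigl(F \otimes^{\mathbb{L}}_R R/I^n,\, X\bigr) \;=\; \text{Hom}_{D(R)}\bigl(F,\, R\text{Hom}_R(R/I^n, X)\bigr) \;=\; 0
\]
for every $X$, which forces $F \otimes^{\mathbb{L}}_R R/I^n = 0$ by Yoneda. Tensoring the fibre triangle with $R/I^n$ therefore yields a natural quasi-isomorphism $K \otimes^{\mathbb{L}}_R R/I^n \xrightarrow{\sim} \hat{K} \otimes^{\mathbb{L}}_R R/I^n$.

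The second step is to take $H^0$ of this isomorphism. Since derived completion is right $t$-exact, which can be verified by computing $\hat{K}$ as a derived limit against a Koszul-type system and using Mittag-Leffler to kill the relevant $R^1\lim$, the complex $\hat{K}$ still lies in $D^{\leq 0}(R)$. For any $A \in D^{\leq 0}(R)$ and any module $N$ concentrated in degree zero, we have $H^0(A \otimes^{\mathbb{L}}_R N) = H^0(A) \otimes_R N$ (the top cohomology of a tensor product is the ordinary tensor product of top cohomologies, as one sees by resolving $N$ by projectives). Applying this with $A = K,\hat{K}$ and $N = R/I^n$ gives natural isomorphisms
\[
H^0(K)/I^n H^0(K) \;\cong\; H^0(\hat{K})/I^n H^0(\hat{K}),
\]
compatible as $n$ varies. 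Passing to the inverse limit over $n$ yields the desired identification $H^0(K)^{\wedge}_I \cong H^0(\hat{K})^{\wedge}_I$.

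The main obstacle is the vanishing $F \otimes^{\mathbb{L}}_R R/I^n = 0$; this is where the Bousfield-localization character of derived completion is used essentially, via the observation that $R\text{Hom}$ out of a derived torsion object is automatically derived complete. The remainder of the argument is formal bookkeeping with the $t$-structure and with inverse limits.
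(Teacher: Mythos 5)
Your argument is correct, and it takes a genuinely different route from the paper's. The paper works entirely with universal properties: it invokes the spectral sequence for derived completion (Stacks tag 0BKE) to identify $H^0(\hat{K})$ with $H^0(\widehat{H^0(K)})$, observes that $H^0(K)\to H^0(\hat{K})$ is then initial among maps from $H^0(K)$ to derived $I$-complete modules, and upgrades this to the universal property of classical completion using that classical completeness implies derived completeness. You instead prove the stronger (and standard) statement that the fibre $F$ of $K\to\hat{K}$ satisfies $F\otimes^{\mathbb{L}}_R R/I^n=0$, so that $K$ and $\hat{K}$ agree after derived reduction mod $I^n$; combined with the right $t$-exactness of derived completion and the identity $H^0(A\otimes^{\mathbb{L}}_R N)=H^0(A)\otimes_R N$ for $A\in D^{\leq 0}(R)$, this gives $H^0(K)/I^n\cong H^0(\hat{K})/I^n$ compatibly in $n$, and the lemma follows by taking inverse limits. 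Each step of yours checks out: the vanishing of $F\otimes^{\mathbb{L}}_R R/I^n$ follows from the adjunction as you say (since $R\mathrm{Hom}_R(R/I^n,X)$ is derived $I$-complete because every $f\in I$ is nilpotent on $R/I^n$), and the Mittag--Leffler argument for $\hat{K}\in D^{\leq 0}(R)$ works because the system $H^0(K)/(f_1^n,\dots,f_r^n)H^0(K)$ has surjective transition maps. Your route buys slightly more — it identifies the two sides already at each finite level $I^n$, not just after completion — at the cost of needing connectivity of $\hat{K}$, whereas the paper's route extracts only the degree-zero information it needs from the spectral sequence and never leaves the world of universal properties.
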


\begin{proof}
For any $R$-module $M$, we denote $\hat{M}$ its derived $I$-completion by viewing $M$ as a complex concentrated in zero degree. Recall that there is a natural map $M \rightarrow H^0(\hat{M})$ which is initial among maps $M \rightarrow N$ with $N$ a
derived $I$-complete $R$-module, see \cite{2019arXiv191210968M} definition 2.27 for example. By the spectral sequence in \cite{stacks-project}
tag 0BKE, we have 
\[
H^0(\hat{K})= H^0(\widehat{H^0(K)}),
\]
so we see that
\[
H^0(K)\rightarrow H^0(\hat{K})
\]
is the initial map among $H^0(K) \rightarrow N$
with $N$ a derived $I$-complete $R$-module. We now claim that the composite map
\[
H^0(K) \rightarrow H^0(\hat{K}) \rightarrow H^0(\hat{K})^{\wedge}_I
\]
is initial among $H^0(K) \rightarrow N'$
where $N'$ is classical $I$-complete. This is exactly the universal property of the classical $I$-completion of $H^0(K)$, whence $H^0(\hat{K})^{\wedge}_I = H^0(K)^{\wedge}_I$. 

We now prove the claim. Let $H^0(K) \rightarrow N'$
where $N'$ be as given, since classical $I$-completeness implies derived $I$-completeness (\cite{stacks-project} tag 091T), we have a unique factorization
\[
H^0(K)\rightarrow H^0(\hat{K}) \rightarrow N',
\]
which further factors as
\[
H^0(K)\rightarrow H^0(\hat{K}) \rightarrow H^0(\hat{K})^{\wedge}_I
\rightarrow N'
\]
by the universal property of $H^0(\hat{K})^{\wedge}_I$. 
\end{proof}

\begin{example} \label{examplewww}
Let $k$ be a perfect field of characteristic $p$, $K$ be a finite totally ramified extension of $W(k)[\frac{1}{p}]$, and $X= \text{Spf}(\mathcal{O}_K)$. Let $n \in \mathbb{N}$ be chosen as in proposition \ref{bbbb} such that   
\[
(\bold{A}_K^+,(\phi^n([p]_q)))\in X_{\prism},
\]
then it is a cover of the final object by the following lemma \ref{coverfinal}. Moreover, we have by lemma \ref{89999}
\[
(\bold{A}_K^+, (\phi^n([p]_q))) \times_{\ast} (\bold{A}_K^+, (\phi^n([p]_q))) = (C,J),
\]
where $C$ is $(p,\phi^n([p]_q) \otimes 1)$-completion of 
$\bold{A}_K^+ \hat{\otimes}_{W(k)} \bold{A}_K^+
 \{\frac{\omega \otimes 1- 1\otimes\omega}{\phi^n([p]_q) \otimes 1}, \frac{1\otimes\phi^n([p]_q)}{\phi^n([p]_q) \otimes 1} \}$,
which is  freely adjoining $\frac{\omega \otimes 1- 1\otimes\omega}{\phi^n([p]_q) \otimes 1}$
and 
$\frac{1\otimes\phi^n([p]_q)}{\phi^n([p]_q) \otimes 1} $
to 
$\bold{A}_K^+ \hat{\otimes}_{W(k)} \bold{A}_K^+$
as $\delta$-rings, and $J=(\phi^n([p]_q)\otimes 1)$.

Thus a prismatic F-crystal in 
$\mathcal{O}_{\prism}[\frac{1}{I_{\prism}}]^{\wedge}_p$-modules
over $X$ is a $\bold{A}_K$-module $M$ together with an isomorphism
\[
M\otimes_{\bold{A}_K} C [\frac{1}{J}]^{\wedge}_p
\cong 
C [\frac{1}{J}]^{\wedge}_p 
\otimes_{\bold{A}_K} M
\]
satisfying cocycle conditions. The ring $C$ is difficult to understand explicitly, we will see below how we can bypass this difficulty by passing to perfections. 
\end{example}

\begin{lemma}\label{coverfinal}
Let $K$ be a finite totally ramified extension of $W(k)[\frac{1}{p}]$,  $X= Spf(\mathcal{O}_K)$, and $n \in \mathbb{N}$ be chosen as in proposition \ref{bbbb} such that   
\[
(\bold{A}_K^+,(\phi^n([p]_q)))\in X_{\prism},
\]
then it
covers the final object in $X_{\prism}$. 
\end{lemma}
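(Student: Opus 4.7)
The plan is to reduce to the perfectoid case via the perfection functor, and there to use $\mathcal{O}_{K(\zeta_{p^{\infty}})^{\wedge}}$ as a universal perfectoid cover of integral perfectoid $\mathcal{O}_K$-algebras.

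First I would invoke the general fact from \cite{2019arXiv190508229B} that the canonical map from any bounded prism to its perfection is a $(p,I)$-completely faithfully flat cover. Combined with Lemma \ref{9900000000}, this identifies the perfection of $(\bold{A}_K^+, (\phi^n([p]_q)))$ with the perfect prism associated with the integral perfectoid ring $\mathcal{O}_{K(\zeta_{p^{\infty}})^{\wedge}}$, the ideal being the Frobenius twist $\phi^{n+1}(\text{Ker}(\theta))$ of the canonical generator (which makes sense because $\phi$ is an automorphism on the perfection). The problem is thus reduced to showing that this perfect prism $(W(\mathcal{O}_{K(\zeta_{p^{\infty}})^{\wedge}}^{\flat}),J)$ covers the final object of $(\mathcal{O}_K)_{\prism}$.

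For the perfectoid case, given any $(C,I_C)\in(\mathcal{O}_K)_{\prism}$, I would pass to its perfection (again a cover), so by the equivalence between perfect prisms and integral perfectoid rings we may assume $(C,I_C)=(W(R^{\flat}),\text{Ker}(\theta))$ for some integral perfectoid $\mathcal{O}_K$-algebra $R$. It then suffices to construct a $p$-completely faithfully flat integral perfectoid extension $R\to R'$ together with an $\mathcal{O}_K$-algebra map $\mathcal{O}_{K(\zeta_{p^{\infty}})^{\wedge}}\to R'$. The natural candidate is to take $R'$ to be the perfectoidization of the $p$-completed tensor product $R\hat{\otimes}_{\mathcal{O}_K}\mathcal{O}_{K(\zeta_{p^{\infty}})^{\wedge}}$. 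Since $\mathcal{O}_K\to\mathcal{O}_{K(\zeta_{p^{\infty}})^{\wedge}}$ is $p$-completely faithfully flat (being the $p$-adic completion of the filtered colimit of the finite free $\mathcal{O}_K$-algebras $\mathcal{O}_{K(\zeta_{p^n})}$), the base change $R\to R\hat{\otimes}_{\mathcal{O}_K}\mathcal{O}_{K(\zeta_{p^{\infty}})^{\wedge}}$ is also $p$-completely faithfully flat, and this property is preserved by perfectoidization.

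The main obstacle is verifying that the perfectoidization is genuinely integral perfectoid (surjective Frobenius modulo $p$ and principal kernel of $\theta$) and that faithful flatness survives this process. I would address this either by invoking the perfectoidization machinery of \cite{bhatt2021prismatic} directly, or by computing the tilt of $R'$ as a suitable perfected completed tensor product of perfect $\mathbb{F}_p$-algebras (where Frobenius is automatically an isomorphism), and then recovering $R'$ as Witt vectors modulo a distinguished generator. Either approach produces the desired cover of $(C,I_C)$, with the structural map from $(W(\mathcal{O}_{K(\zeta_{p^{\infty}})^{\wedge}}^{\flat}),J)$ built in, completing the argument.
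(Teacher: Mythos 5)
Your reduction contains a genuine gap at its very first step: it is \emph{not} a general fact that the canonical map from a bounded prism $(A,I)$ to its perfection $(A_{\mathrm{perf}}, IA_{\mathrm{perf}})$ is a $(p,I)$-completely faithfully flat cover, and \cite{2019arXiv190508229B} does not assert this. Since $A_{\mathrm{perf}}$ is the $(p,I)$-completion of $\mathrm{colim}_{\phi} A$, such flatness would essentially force $\phi$ to be flat on $A$, i.e.\ (by Kunz) $A/p$ to be regular, which fails for general objects of $(\mathcal{O}_K)_{\prism}$ --- the test prisms $(C,I_C)$ in this lemma are arbitrary bounded prisms with a map $\mathcal{O}_K \to C/I_C$, and nothing constrains their singularities. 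Where you apply the claim to $(\bold{A}_K^+,(\phi^n([p]_q)))$ itself the damage is limited: to show an object covers the final object one only needs the existence of a prism map to something covering the final object (and in any case $\phi$ is flat on $\bold{A}_K^+$ by Lemma \ref{oooooo}). But where you apply it to the arbitrary test object $(C,I_C)$ it is fatal: your construction produces a cover of $(C_{\mathrm{perf}}, I_C C_{\mathrm{perf}})$ receiving a map from the perfect prism of $\mathcal{O}_{K(\zeta_{p^{\infty}})^{\wedge}}$, but the composite $C \to C_{\mathrm{perf}} \to D$ is not known to be $(p,I_C)$-completely faithfully flat, so you have exhibited no covering family of $(C,I_C)$ itself, and the epimorphism onto the final object is not established for non-perfect test objects. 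This cannot be patched by claiming every prism is flat-covered by a perfect one; that is exactly the same unproved assertion in disguise. (The comparison between the full and the perfect prismatic sites is a nontrivial statement, recovered in this paper only a posteriori in Remark \ref{wqpirei}.)

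The paper's proof sidesteps perfection of the test object entirely: given $(A,I)$, it uses the quasisyntomic cover $\mathcal{O}_K \to \mathcal{O}_K[\zeta_{p^{\infty}}]^{\wedge}_p$ together with \cite{2019arXiv190508229B} Proposition 7.11 to produce a genuine flat cover $(A,I) \to (C,J)$ in the site such that $C/J$ receives a map from $\mathcal{O}_K[\zeta_{p^{\infty}}]^{\wedge}_p$; the rigidity of perfectoid rings (\cite{2019arXiv190508229B} Lemma 4.7) then lifts this to a prism map from $(A_{\mathrm{inf}}(\mathcal{O}_K[\zeta_{p^{\infty}}]^{\wedge}_p), \mathrm{Ker}(\theta))$, which one precomposes with the perfection map of $(\bold{A}_K^+,(\phi^n([p]_q)))$ and the Frobenius untwist $\phi^{-n-1}$ via Lemma \ref{9900000000}. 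If you want to keep your strategy of base changing along $\mathcal{O}_K \to \mathcal{O}_{K(\zeta_{p^{\infty}})^{\wedge}}$ and perfectoidizing, you would still need to carry out that construction relative to the original $(C,I_C)$, not its perfection --- which is in effect what Proposition 7.11 packages --- and you would additionally have to justify that $p$-complete faithful flatness survives perfectoidization, which you flag but do not resolve.
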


\begin{proof}
Let $(A,I)$ be an object of $X_{\prism}$, then $A/I$ is a $\mathcal{O}_K$-algebra. We have a quasisyntomic cover $\mathcal{O}_K[\zeta_{p^{\infty}}]^{\wedge}_p$ 
of $\mathcal{O}_K$, hence 
\[
A/I \hat{\otimes}_{\mathcal{O}_K} \mathcal{O}_K[\zeta_{p^{\infty}}]^{\wedge}_p
\]
is a quasisyntomic cover of $A/I$. By \cite{2019arXiv190508229B} proposition 7.11, we can find a prism $(C,J)$ that covers $(A,I)$ such that there is a morphism 
\[
A/I \hat{\otimes}_{\mathcal{O}_K} \mathcal{O}_K[\zeta_{p^{\infty}}]^{\wedge}_p 
\rightarrow 
C/J.
\]
Now as $\mathcal{O}_K[\zeta_{p^{\infty}}]^{\wedge}_p$ is integral perfectoid, the composition 
\[
\mathcal{O}_K[\zeta_{p^{\infty}}]^{\wedge}_p
\rightarrow
A/I \hat{\otimes}_{\mathcal{O}_K} \mathcal{O}_K[\zeta_{p^{\infty}}]^{\wedge}_p 
\rightarrow 
C/J
\]
lifts to a map of prisms
\[
(A_{\text{inf}}(\mathcal{O}_K[\zeta_{p^{\infty}}]^{\wedge}_p), \text{Ker}(\theta)) \rightarrow (C,J)
\]
by \cite{2019arXiv190508229B} lemma 4.7. We have that 
\[
(\bold{A}_K^+,(\phi^n([p]_{q})))_{\text{perf}}= (A_{\text{inf}}(\mathcal{O}_K[\zeta_{p^{\infty}}]^{\wedge}_p), \phi^{n+1}(\text{Ker}(\theta)))
\]
by lemma \ref{9900000000}, so we have a map 
\[
(\bold{A}_K^+,(\phi^n([p]_{q}))) \rightarrow
(A_{\text{inf}}(\mathcal{O}_K[\zeta_{p^{\infty}}]^{\wedge}_p), \phi^{n+1}(\text{Ker}(\theta)))
\overset{\phi^{-n-1}}{\longrightarrow}
(A_{\text{inf}}(\mathcal{O}_K[\zeta_{p^{\infty}}]^{\wedge}_p), \text{Ker}(\theta)) \rightarrow (C,J)
\]
of prisms. As $(C,J)$ covers $(A,I)$, we have finished the proof.
\end{proof}

\begin{lemma} \label{89999}
Let $k$ be a perfect field of characteristic $p$, $K$ be a finite totally ramified extension of $W(k)[\frac{1}{p}]$, and
$\omega \in \bold{A}_K^+ $
be an element which modulo $\phi^n([p]_q)$ becomes a uniformizer of $\mathcal{O}_K$ under the inclusion 
$\mathcal{O}_K \subset \bold{A}_K^+/\phi^n([p]_q)$.
Then as objects of $Spf(\mathcal{O}_K)_{\prism}$, we have 
\[
(\bold{A}_K^+, (\phi^n([p]_q))) \times_{\ast} (\bold{A}_K^+, (\phi^n([p]_q))) = (C,J),
\]
where
\[
(C,J):= 
(\bold{A}_K^+ \hat{\otimes}_{W(k)} \bold{A}_K^+
 \{\frac{\omega \otimes 1- 1\otimes\omega}{\phi^n([p]_q) \otimes 1}, \frac{1\otimes\phi^n([p]_q)}{\phi^n([p]_q) \otimes 1} \}^{\wedge}, (\phi^n([p]_q) \otimes 1)).
\]
The ring displayed
is $(p,\phi^n([p]_q) \otimes 1)$-completion of 
$\bold{A}_K^+ \hat{\otimes}_{W(k)} \bold{A}_K^+
 \{\frac{\omega \otimes 1- 1\otimes\omega}{\phi^n([p]_q) \otimes 1}, \frac{1\otimes\phi^n([p]_q)}{\phi^n([p]_q) \otimes 1} \}$,
which is  freely adjoining $\frac{\omega \otimes 1- 1\otimes\omega}{\phi^n([p]_q) \otimes 1}$
and 
$\frac{1\otimes\phi^n([p]_q)}{\phi^n([p]_q) \otimes 1} $
to 
$\bold{A}_K^+ \hat{\otimes}_{W(k)} \bold{A}_K^+$
as $\delta$-rings.
\end{lemma}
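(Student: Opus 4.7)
The plan is to verify the universal property of the self fibre product in the category $\text{Spf}(\mathcal{O}_K)_{\prism}$, relying on the prismatic envelope formalism of Bhatt--Scholze. A cocone consists of a prism $(D, ID) \in \text{Spf}(\mathcal{O}_K)_{\prism}$ together with two prism morphisms $f_1, f_2 \colon (\bold{A}_K^+, (\phi^n([p]_q))) \to (D, ID)$, both compatible with the $\mathcal{O}_K$-structure on $D/ID$. I will show such data factors uniquely through $(C, J)$ via the two canonical projections $\pi_1, \pi_2 \colon \bold{A}_K^+ \to C$ coming from the two tensor factors.

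First I would verify that $(C, J)$ is a bounded prism. Starting from the $\delta$-ring $\bold{A}_K^+ \hat{\otimes}_{W(k)} \bold{A}_K^+$, freely adjoining the $\delta$-generators $u := (\omega \otimes 1 - 1 \otimes \omega)/(\phi^n([p]_q) \otimes 1)$ and $v := (1 \otimes \phi^n([p]_q))/(\phi^n([p]_q) \otimes 1)$ and then $(p, J)$-completing produces $C$. Invoking the prismatic envelope construction of \cite{2019arXiv190508229B} Section 3, with the requisite regularity of $\phi^n([p]_q) \otimes 1$ together with $\omega \otimes 1 - 1 \otimes \omega$ and $1 \otimes \phi^n([p]_q)$ in $\bold{A}_K^+ \hat{\otimes}_{W(k)} \bold{A}_K^+$ being checked modulo $p$ using the flatness assertions proved earlier in this section, yields that $J$ is a Cartier divisor and $(C, J)$ is a bounded prism. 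Moreover $v$ is automatically a unit in $C$: the element $1 \otimes \phi^n([p]_q)$ is distinguished since $\phi^n([p]_q)$ is, and two distinguished generators of the same principal ideal in a $\delta$-ring differ by a unit.

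I would next confirm that $(C, J) \in \text{Spf}(\mathcal{O}_K)_{\prism}$ and that both $\pi_i$ are morphisms in the site. By construction of $u$, modulo $J$ we have $\omega \otimes 1 \equiv 1 \otimes \omega$; as $\mathcal{O}_K$ is generated over $W(k)$ by $\omega$ through its Eisenstein minimal polynomial, the two maps $\mathcal{O}_K \hookrightarrow \bold{A}_K^+/\phi^n([p]_q) \to C/J$ coming from the two tensor factors coincide, giving a well-defined $\mathcal{O}_K$-algebra structure on $C/J$ under which the $\pi_i$ are site morphisms.

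For the universal property, the pair $(f_1, f_2)$ induces a $\delta$-map $\bold{A}_K^+ \hat{\otimes}_{W(k)} \bold{A}_K^+ \to D$. Both $f_1(\phi^n([p]_q))$ and $f_2(\phi^n([p]_q))$ are distinguished generators of $ID$, hence their ratio is a unit in $D$ supplying the image of $v$. Since $f_1, f_2$ induce the same $\mathcal{O}_K$-algebra structure on $D/ID$, we have $f_1(\omega) - f_2(\omega) \in ID = (f_1(\phi^n([p]_q)))$, so $(f_1(\omega) - f_2(\omega))/f_1(\phi^n([p]_q)) \in D$ supplies the image of $u$. The universal property of the free $\delta$-ring envelope then produces a unique extension $C \to D$ compatible with all the structure; since $D$ is $(p, ID)$-complete, this extension descends through the $(p, J)$-completion. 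The main obstacle is confirming that $(C, J)$ is indeed a bounded prism, which comes down to checking a suitable regular sequence in $\bold{A}_K^+ \hat{\otimes}_{W(k)} \bold{A}_K^+$ modulo $p$ before invoking Bhatt--Scholze's general prismatic envelope construction.
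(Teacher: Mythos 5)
Your overall strategy is the same as the paper's: both arrows factor through a $\delta$-map out of $\bold{A}_K^+ \hat{\otimes}_{W(k)} \bold{A}_K^+$, which then factors through the prismatic envelope $C$, and one invokes \cite{2019arXiv190508229B} proposition 3.13 to see that $(C,J)$ is a prism (the unit claim for $\frac{1\otimes\phi^n([p]_q)}{\phi^n([p]_q)\otimes 1}$ is handled the same way, via lemma 2.24 of \emph{loc.\ cit.}, though as stated your phrasing is slightly circular: a priori you only know $1\otimes\phi^n([p]_q)\in(\phi^n([p]_q)\otimes 1)$, not that the two elements generate the same ideal). However, the step you explicitly defer is the technical heart of the lemma, and the method you gesture at does not work. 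You say the regularity of the sequence $\{1\otimes\phi^n([p]_q),\,\omega\otimes 1-1\otimes\omega\}$ is ``checked modulo $p$ using the flatness assertions.'' Regularity of a sequence modulo $p$ does not imply regularity before reduction without additional torsion-freeness input, and modulo $p$ the relevant quotient acquires nilpotents (since $\phi^n([p]_q)\equiv(q-1)^{p^n(p-1)}\bmod p$), so the check becomes harder, not easier. The paper's actual argument is in characteristic zero: $1\otimes\phi^n([p]_q)$ is regular because $\bold{A}_K^+$ is a domain and $\bold{A}_K^+\to\bold{A}_K^+\hat{\otimes}_{W(k)}\bold{A}_K^+$ is flat; then $\omega\otimes 1-1\otimes\omega$ lies in $\bold{A}_K^+\hat{\otimes}_{W(k)}\mathcal{O}_K\cong\bold{A}_K^+[x]/E$ with $E$ Eisenstein, and this ring is an integral domain because $\bold{A}_K^+$ is a regular local ring, hence a UFD by Auslander--Buchsbaum, hence $\bold{A}_K^+[x]$ is a UFD by Gauss and the irreducible $E$ is prime; finally flatness of $\mathcal{O}_K\to\bold{A}_K^+/\phi^n([p]_q)$ (the target being a domain, as shown in the proof of proposition \ref{bbbb}) transports regularity to $\bold{A}_K^+\hat{\otimes}_{W(k)}(\bold{A}_K^+/\phi^n([p]_q))$. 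Without some such argument your proof is incomplete.

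A second, smaller gap: you pass from the pair $(f_1,f_2)$ to a $\delta$-map out of the tensor product \emph{over $W(k)$} without justifying that $f_1$ and $f_2$ agree on $W(k)$. The paper handles this by noting that, by deformation theory of perfect rings, the map $W(k)\to A/I$ lifts uniquely to $W(k)\to A$ for every object $(A,I)$ of $(\mathcal{O}_K)_{\prism}$, so every arrow in the site is automatically a $W(k)$-algebra map; some remark of this kind is needed before the tensor product over $W(k)$ is the right universal object.
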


\begin{proof}
By definition, an object of $X_{\prism}$ is a prism $(A,I)$ equipped with a morphism $\mathcal{O}_K \rightarrow A/I$. 
The $\mathcal{O}_K$-algebra structure does not necessarily lift to $A$, but the corresponding $W(k)$-algebra structure does. Indeed,
by deformation theory of perfect rings, $W(k)\subset \mathcal{O}_K \rightarrow A/I$ lifts canonically to a morphism $W(k) \rightarrow A$, hence objects of $X_{\prism}$ are naturally equipped with $W(k)$-algebra structures, and all arrows in $X_{\prism}$ are $W(k)$-algebra morphisms. 

Now given such an $(A,I)$, together with two arrows
$(\bold{A}_K^+, (\phi^n([p]_q))) \rightarrow (A,I)$
in $X_{\prism}$,
they give rise canonically to a $\delta$-ring morphism 
\[
f:
\bold{A}_K^+ \hat{\otimes}_{W(k)} \bold{A}_K^+ \longrightarrow A.
\]
We know from properties of prisms that $I = (f(\phi^n([p]_q) \otimes 1)) = (f(1 \otimes \phi^n([p]_q)))$. Moreover, being arrows in $X_{\prism}$, the two arrows are $\mathcal{O}_K$-algebra morphisms $\text{mod} \ \phi^n([p]_q)$,
which means that $f$ maps
$1\otimes\omega-\omega \otimes 1 \ \text{mod} \ \phi^n([p]_q) \otimes 1$ 
to 0 in $A/I$. Thus
 $f$ factors through 
 $\bold{A}_K^+ \hat{\otimes}_{W(k)} \bold{A}_K^+
 \{\frac{\omega \otimes 1- 1\otimes\omega}{\phi^n([p]_q) \otimes 1}, \frac{1\otimes\phi^n([p]_q)}{\phi^n([p]_q) \otimes 1} \}^{\wedge}$. 

It remains to show that
$
(C,J)
$ 
is a prism. First we observe that 
$\frac{1\otimes\phi^n([p]_q)}{\phi^n([p]_q) \otimes 1}$
is a unit in 
$
C
$, 
whence $(1\otimes\phi^n([p]_q)) = (\phi^n([p]_q) \otimes 1)$. 
This follows from the equation 
\[
1\otimes\phi^n([p]_q)= \phi^n([p]_q) \otimes 1 \cdot \frac{1\otimes\phi^n([p]_q)}{\phi^n([p]_q) \otimes 1}
\]
and \cite{2019arXiv190508229B} lemma 2.24.

We now claim that 
$\{ 1\otimes\phi^n([p]_q),\omega \otimes 1- 1\otimes\omega\}$
is a regular sequence of 
$\bold{A}_K^+ \hat{\otimes}_{W(k)} \bold{A}_K^+$.
Viewing 
$\bold{A}_K^+ \hat{\otimes}_{W(k)} \bold{A}_K^+$ as a $\bold{A}_K^+$-algebra along the first factor, then it follows from \cite{2019arXiv190508229B} proposition 3.13 that 
$
(C,J)
$
is a prism. It remains to prove the claim, which puts us into the setting of \cite{2019arXiv190508229B} proposition 3.13. 

Observe that $1\otimes\phi^n([p]_q)$ is regular since $\phi^n([p]_q)$ is regular in $\bold{A}_K^+$ (being an integral domain), and 
$\bold{A}_K^+ \hat{\otimes}_{W(k)} \bold{A}_K^+ $
is flat over $\bold{A}_K^+$. We want to show that $\omega \otimes 1- 1\otimes\omega$ 
is regular in 
\[
\bold{A}_K^+ \hat{\otimes}_{W(k)} \bold{A}_K^+ / 1\otimes\phi^n([p]_q)
\cong \bold{A}_K^+ \hat{\otimes}_{W(k)} (\bold{A}_K^+ /\phi^n([p]_q)).
\]
We note that 
\[
\omega \otimes 1- 1\otimes\omega \in 
\bold{A}_K^+ \hat{\otimes}_{W(k)} \mathcal{O}_K \subset
\bold{A}_K^+ \hat{\otimes}_{W(k)} (\bold{A}_K^+ /\phi^n([p]_q))
\]
by definition of $\omega$. By our assumption on $k$, $\mathcal{O}_K$ is totally ramified over $W(k)$, so $\mathcal{O}_K \cong W(k)[x]/E$
for an Eisenstein polynomial $E \in W(k)[x]$. Then 
\[
\bold{A}_K^+ \hat{\otimes}_{W(k)} \mathcal{O}_K \cong
\bold{A}_K^+[x]/E,
\]
and by Eisenstein criterion, $E$ is irreducible in 
$\bold{A}_K^+[x]$. Note that $\bold{A}_K^+$ is a regular local ring, whence a UFD by Auslander–Buchsbaum theorem. Then Gauss's lemma tells us $\bold{A}_K^+[x]$ is a UFD as well, so irreducible polynomials are prime. Then  
$\bold{A}_K^+[x]/E$ is an integral domain, and $\omega \otimes 1- 1\otimes\omega$
is regular in $\bold{A}_K^+ \hat{\otimes}_{W(k)} \mathcal{O}_K$. 
We know from lemma  \ref{www.oo} that 
$\mathcal{O}_K \rightarrow \bold{A}_K^+ /\phi^n([p]_q)$
is flat,  so
 $\omega \otimes 1- 1\otimes\omega$
 is regular in 
$\bold{A}_K^+ \hat{\otimes}_{W(k)} (\bold{A}_K^+ /\phi^n([p]_q))$. 
\end{proof}

\section{étale $\varphi$-modules}

In this section, we prove that étale $\varphi$-modules on prisms does not change by passing to perfections. We first recall the definition of étale $\varphi$-modules.  

\begin{definition}
Let $R$ be a ring equipped with a ring morphism $\varphi : R\rightarrow R$, an étale $\varphi$-modules over $R$ is a finite projective  $R$-module $M$ equipped with an $R$-module isomorphism
\[
F: \varphi^*M=M \otimes_{R,\varphi} R \overset{\sim}{\longrightarrow} M.
\]
The morphisms between étale $\varphi$-modules are $R$-module morphisms preserving $F$. We denote by $\textbf{ÉM}_{/R}$ the category of étale $\varphi$-modules over $R$. 
\end{definition}    

First,  we observe that by passing to naive perfections, the category does not change. 

\begin{proposition} \label{perfectionetale}
There is an equivalence of categories 
\[
\textbf{ÉM}_{/R} \overset{\sim}{\longrightarrow}
\textbf{ÉM}_{/\underset{\varphi}{colim} \ R}
\]
induced by base changing to $\underset{\varphi}{colim} \ R$. 
\end{proposition}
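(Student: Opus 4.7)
The plan is to show the base-change functor along $R \to R_\infty := \underset{\varphi}{\mathrm{colim}} \, R$ is essentially surjective and fully faithful on the categories of étale $\varphi$-modules, using the twin observations that (i) finitely presented modules and morphisms over a filtered colimit descend to a finite stage, and (ii) in this particular colimit, ``descending by $k$ stages'' literally means ``applying $\varphi^k$'' on the module/morphism side.

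For essential surjectivity, I start with an étale $\varphi$-module $(N, G)$ over $R_\infty$. Since $N$ is finite projective over $R_\infty$ it is finitely presented, so by the standard descent of finitely presented modules along filtered colimits there exists a finite projective $R$-module $M$ with $N \cong M \otimes_R R_\infty$. Using that $\varphi$ is an automorphism of $R_\infty$, the source $\varphi^* N$ is identified with $\varphi^* M \otimes_R R_\infty$, so the $R_\infty$-morphism $G : \varphi^* N \to N$ between finitely presented modules descends to an $R$-morphism $G_0 : \varphi^* M \to M$ at some finite stage of the colimit. The morphism $G_0$ need not be an isomorphism on the nose over $R$, but it becomes one after base change to $R_\infty$; since the transition maps in the colimit are themselves Frobenius, this forces $(\varphi^k)^* G_0$ to be an isomorphism over $R$ for $k$ large enough. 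Replacing the pair $(M, G_0)$ by $((\varphi^k)^* M, (\varphi^k)^* G_0)$ then produces an étale $\varphi$-module over $R$ that base-changes to $(N, G)$.

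For full faithfulness, given $(M, F_M), (N, F_N)$ over $R$ and a $\varphi$-compatible $R_\infty$-linear morphism $g : M \otimes_R R_\infty \to N \otimes_R R_\infty$, I again use finite presentation of $M$ to descend $g$ to an $R$-morphism $g_n : (\varphi^n)^* M \to (\varphi^n)^* N$ at some finite stage $n$. Composing with the $R$-linear isomorphisms $F_M^n : (\varphi^n)^* M \xrightarrow{\sim} M$ and $F_N^n : (\varphi^n)^* N \xrightarrow{\sim} N$ produces a candidate morphism $f := F_N^n \circ g_n \circ (F_M^n)^{-1} : M \to N$. A direct computation using the $\varphi$-compatibility of $g$ together with that of $F_M, F_N$ shows that $f$ commutes with the Frobenii and that $f \otimes_R R_\infty = g$; well-definedness in $n$ (compatibility of the different finite-stage representatives of $g$) and the vanishing implication $f \otimes_R R_\infty = 0 \Rightarrow f = 0$ both follow from the same colimit descent principle, which together yield the desired bijection on $\mathrm{Hom}$.

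The main obstacle is really just careful bookkeeping of Frobenius twists when descending from $R_\infty$ to $R$. There is no deep input beyond standard filtered-colimit descent for finitely presented modules, because the notion of étale $\varphi$-module is precisely designed to be insensitive to such Frobenius pullbacks via the chosen isomorphism $F$.
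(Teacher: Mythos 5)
Your argument is correct and is essentially the paper's own proof: descend the module (resp.\ morphism) to a finite stage of the colimit by standard finite-presentation descent, then absorb the resulting Frobenius twist using the iterated structure map $F\circ\varphi^*F\circ\cdots\circ(\varphi^{n-1})^*F : (\varphi^n)^*M\xrightarrow{\sim}M$, with full faithfulness handled by exactly the same conjugation $f:=F_N^n\circ g_n\circ(F_M^n)^{-1}$. One small caveat: the implication $f\otimes_R R_\infty=0\Rightarrow f=0$ does not follow from colimit descent alone (which only yields $(\varphi^n)^*f=0$, and $\varphi$ need not be flat or injective); you must additionally invoke the $\varphi$-compatibility $f=F_N^n\circ(\varphi^n)^*f\circ(F_M^n)^{-1}$, which is precisely how the paper concludes uniqueness.
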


\begin{proof}
For notational convenience, we index the rings $R$ in the relevant system by $R_n$, i.e. the ring $\underset{\varphi}{\text{colim}} \ R$
is the colimit of the cofiltered system
\[
R_0 \overset{\varphi}{\longrightarrow} R_1 \overset{\varphi}{\longrightarrow} R_2 \overset{\varphi}{\longrightarrow} \cdots
\]
with each $R_i = R$. As the data of an étale $\varphi$-module is finite in nature, an étale $\varphi$-module over 
$\underset{\varphi}{\text{colim}} \ R$
comes via base change from an étale $\varphi$-module over $R_n$ for some $n$. We need to show that it has further descent to $R_0$. Now let $M$ be an étale $\varphi$-module over $R_n$. Since $R_n=R=R_0$, we can view $M$ as an étale $\varphi$-module over $R_0$, and we claim that
$M \otimes_{R_o} R_n = (\varphi^{n})^{*}M $
is isomorphic to $M$ as  étale $\varphi$-modules over $R_n$. Iterate the $\varphi$-module structure
$F: \varphi^*M \cong M$, we obtain an $R$-module isomorphism
\[
G := F \circ \varphi^* F \circ \cdots \circ (\varphi^{n-1})^*F : (\varphi^n)^* M \overset{\sim}{\longrightarrow} M
\]
we need to check that this is an étale $\varphi$-module isomorphism, i.e. 
$G \circ (\varphi^n)^*F = F \circ \varphi^*G$, but this is clear. This proves essential surjectivity. 

For fully faithfullness, we need to show that for
$(M,F_M), (N,F_N) \in \textbf{ÉM}_{/R_0}$, an arrow 
$M \otimes_{R_0} \underset{\varphi}{\text{colim}} \ R
\rightarrow 
N\otimes_{R_0} \underset{\varphi}{\text{colim}} \ R$
in
$\textbf{ÉM}_{/\underset{\varphi}{\text{colim}} \ R}$
comes uniquely from an arrow in 
$\textbf{ÉM}_{/R_0}$ 
via base change. The arrow comes from an arrow in 
$\textbf{ÉM}_{/R_n}$
for some $n$ by standard finiteness argument, and we are reduced to showing that any arrow 
\[
f: M\otimes_{R_0} R_n \longrightarrow N \otimes_{R_0}R_n
\]
in $\textbf{ÉM}_{/R_n}$ has a unique descent to $R_0$. We show that it descends uniquely to $R_{n-1}$, which proves the claim by iteration. Now we can assume $n=1$, the previous paragraph shows that 
\[
F_M  : \varphi^*M \overset{\sim}{\longrightarrow} M
\]
\[
F_N 
:\varphi^*N \overset{\sim}{\longrightarrow} N
\]
are isomorphisms as étale $\varphi$-modules over $R$ (being denoted by $G$ in the previous paragraph). Let 
\[
g:= F_N \circ f \circ F_M^{-1} : M \longrightarrow N
\]
then $g$ is an arrow in 
$\textbf{ÉM}_{/R_0}$, and we claim that $\varphi^*g = f$. Since $f$ is an arrow in $\textbf{ÉM}_{/R_1}$, we have 
\[
\varphi^* F_N \circ \varphi^*f = f\circ \varphi^*F_M 
\]
and then 
\[
\varphi^*g= \varphi^*F_N \circ \varphi^*f \circ \varphi^*F_M^{-1} = f \circ \varphi^*F_M \circ \varphi^*F_M^{-1} 
= f,
\]
proving the existence of the descent. It is unique since any descent $h$ of $f$ satisfies the relation $\varphi^*h =f$ by definition, and $h$ satisfies the relation
$h \circ F_M =F_N \circ \varphi^*h $
as it is in 
$\textbf{ÉM}_{/R_0}$, combining the two we have
\[
h\circ F_M = F_N \circ \varphi^*h = F_N \circ f
\]
proving 
\[
h= F_N \circ f \circ F_M^{-1} = g.
\]
\end{proof}

Next we show that $p$-adic completion of the perfection does not lose information of étale $\varphi$-modules over $p$-complete rings. 

\begin{lemma} \label{uiiiiiiiweuwi}
Let $R$ be a $p$-adically complete ring equipped with a ring morphism $\varphi: R\longrightarrow R$, then base change induces an equivalence of categories 
\[
\textbf{ÉM}_{/R} \overset{\sim}{\longrightarrow} \textbf{ÉM}_{/\underset{\varphi}{(colim} \ R)^{\wedge}_p}.
\]
\end{lemma}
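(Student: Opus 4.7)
Let $S := \underset{\varphi}{\text{colim}} \, R$ and $\hat{S} := S^{\wedge}_p$. By Proposition \ref{perfectionetale} the base change functor $\textbf{ÉM}_{/R} \to \textbf{ÉM}_{/S}$ is already an equivalence, so the remaining task is to show that base change along $S \to \hat{S}$ is an equivalence. My plan is to exploit the identification $S/p^n = \hat{S}/p^n$ and the naturality of Proposition \ref{perfectionetale}: applied to the ring $R/p^n$ (with its induced Frobenius), it yields $\textbf{ÉM}_{/R/p^n} \cong \textbf{ÉM}_{/S/p^n}$, and these equivalences are compatible under the transition $n+1 \to n$ because all functors in sight are given by base change.

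For essential surjectivity, given $(\hat{M}, \hat{F}) \in \textbf{ÉM}_{/\hat{S}}$, I would reduce modulo $p^n$ to obtain a compatible system $(M_n, F_n)$ of étale $\varphi$-modules over $\hat{S}/p^n = S/p^n$. By Proposition \ref{perfectionetale} applied to $R/p^n$, each $(M_n, F_n)$ descends canonically to an étale $\varphi$-module $(N_n, G_n)$ over $R/p^n$, and the naturality above ensures that the $(N_n, G_n)$ form a compatible system under reduction. Since $R$ is $p$-adically complete, \cite{stacks-project} tag 0D4B identifies $\text{Vect}(R)$ with $\lim_n \text{Vect}(R/p^n)$, so the $(N_n)$ assemble to a finite projective $R$-module $N$ and the compatible $(G_n)$ glue (using that $\varphi^*$ of a finite projective $R$-module commutes with inverse limits of its reductions) into an isomorphism $G : \varphi^*N \overset{\sim}{\to} N$. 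Base changing $(N, G)$ along $R \to \hat{S}$ then recovers $(\hat{M}, \hat{F})$ by unwinding the construction.

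For fully faithfulness, given $(M_i, F_i) \in \textbf{ÉM}_{/R}$ with $i=1,2$, the morphism set $\text{Hom}_{\textbf{ÉM}_{/R}}(M_1, M_2)$ is the equalizer of two $R$-linear maps on the finite projective $R$-module $\text{Hom}_R(M_1, M_2)$, and an analogous equalizer description holds over $\hat{S}$. Finite projectivity and tag 0D4B give $\text{Hom}_{\textbf{ÉM}_{/R}}(M_1, M_2) = \lim_n \text{Hom}_{\textbf{ÉM}_{/R/p^n}}(M_1/p^n, M_2/p^n)$ and similarly on the $\hat{S}$ side; Proposition \ref{perfectionetale} at each level $n$ identifies the corresponding terms of the two inverse systems, whence the limits agree and the base change map on morphisms is bijective.

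The main obstacle is the bookkeeping of naturality in $n$ for Proposition \ref{perfectionetale}. Although naturality is essentially automatic because the descent functor is built from an explicit Frobenius-twist formula that commutes with quotients by $p^n$, one must carefully verify that the descended data $(N_n, G_n)$ really fit into a compatible tower and that the gluing respects both the projective module structure and the $\varphi$-equivariance, so that tag 0D4B may be invoked to assemble them into a single étale $\varphi$-module over $R$.
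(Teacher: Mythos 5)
Your proposal is correct and follows essentially the same route as the paper: reduce to finite level using $p$-completeness (via tag 0D4B, i.e.\ $\textbf{ÉM}_{/T}\cong\lim_n\textbf{ÉM}_{/T/p^n}$ for $p$-complete $T$), identify $\hat S/p^n = S/p^n = \underset{\varphi}{\mathrm{colim}}\,(R/p^n)$, apply Proposition \ref{perfectionetale} at each level $n$, and glue. The paper compresses this into a chain of categorical equalities, while you unpack essential surjectivity and full faithfulness and flag the naturality-in-$n$ issue explicitly, but the underlying argument is identical.
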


\begin{proof}
By $p$-completeness of 
$\underset{\varphi}{(\text{colim}} \ R)^{\wedge}_p$, 
we have
\[
\textbf{ÉM}_{/\underset{\varphi}{(\text{colim}} \ R)^{\wedge}_p} = \underset{n}{\text{lim}} \ \textbf{ÉM}_{/\underset{\varphi}{(\text{colim}} \ R)^{\wedge}_p/p^n} =
\underset{n}{\text{lim}} \ \textbf{ÉM}_{/\underset{\varphi}{(\text{colim}} \ R)/p^n} = 
\]
\[
\underset{n}{\text{lim}} \ \textbf{ÉM}_{/\underset{\varphi}{\text{colim}} \ (R/p^n)}
=
\underset{n}{\text{lim}} \ \textbf{ÉM}_{/ (R/p^n)}
=
\textbf{ÉM}_{/ R}
\]
where we use the commutativity of colimit with tensoring with $\mathbb{Z}/p^n$ in the third equality, proposition \ref{perfectionetale} in the fourth equality and $p$-completeness in the last one. 
\end{proof}

We now specialize to the case of prisms and closely related rings. Let $(A,I)$ be a bounded prism, we want to study étale $\varphi$-modules over $A[\frac{1}{I}]^{\wedge}_p$. There are two natural ways to form a perfection of the ring. The first is take the perfection directly and then $p$-complete it, namely 
\[
(\underset{\phi}{\text{colim}} \ A[\frac{1}{I}]^{\wedge}_p)^{\wedge}_p,
\]
while the second is to take the perfection of the prism $(A,I)$ first, then inverting $I$ and $p$-complete, i.e.
\[
((\underset{\phi}{\text{colim}} \ A )^{\wedge}_{(p,I)} [\frac{1}{I}])^{\wedge}_p.
\]
It is the second one that will ultimately help us, and we want to understand étale $\varphi$-modules over it. Note that we have already understand étale $\varphi$-modules over the first ring, namely lemma \ref{uiiiiiiiweuwi} tells us that étale $\varphi$-modules over 
$(\underset{\phi}{\text{colim}} \ A[\frac{1}{I}]^{\wedge}_p)^{\wedge}_p$
is the same as étale $\varphi$-modules over 
$A[\frac{1}{I}]^{\wedge}_p$. Observe that we have a natural morphism 
\[
(\underset{\phi}{\text{colim}} \ A[\frac{1}{I}]^{\wedge}_p)^{\wedge}_p
\longrightarrow
((\underset{\phi}{\text{colim}} \ A )^{\wedge}_{(p,I)} [\frac{1}{I}])^{\wedge}_p
\]
and we have the following theorem characterizing étale $\varphi$-modules over 
$((\underset{\phi}{\text{colim}} \ A )^{\wedge}_{(p,I)} [\frac{1}{I}])^{\wedge}_p$.

\begin{theorem} \label{934030598}
Let $(A,I)$ be a bounded prism such that $\phi(I) \ \text{mod} \ p$ is generated by a non-zero divisor in $A/p$, then
we have an equivalence of categories 
\[
\textbf{ÉM}_{/(\underset{\phi}{\text{colim}} \ A[\frac{1}{I}]^{\wedge}_p)^{\wedge}_p} \overset{\sim}{\longrightarrow} \textbf{ÉM}_{/((\underset{\phi}{\text{colim}} \ A )^{\wedge}_{(p,I)} [\frac{1}{I}])^{\wedge}_p}
\]
induced by base change.
\end{theorem}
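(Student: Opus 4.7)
The plan is to first reduce the comparison modulo $p$, and then handle the mod-$p$ case by combining the perfectness of the resulting rings with Frobenius descent.

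Both
\[
R_1 := (\underset{\phi}{\text{colim}} A[\tfrac{1}{I}]^{\wedge}_p)^{\wedge}_p \quad \text{and} \quad R_2 := ((\underset{\phi}{\text{colim}} A )^{\wedge}_{(p,I)} [\tfrac{1}{I}])^{\wedge}_p
\]
are $p$-adically complete, so the same reasoning already used in the proof of Lemma \ref{uiiiiiiiweuwi} gives $\textbf{ÉM}_{/R_i} = \underset{n}{\lim} \ \textbf{ÉM}_{/R_i/p^n}$. A standard infinitesimal-lifting argument---finite projective modules deform uniquely along nilpotent thickenings, and the Frobenius-isomorphism datum lifts uniquely too---reduces the problem to proving the equivalence modulo $p$.

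A direct computation, using that reduction modulo $p$ commutes both with $p$-completion and with the relevant filtered colimits, identifies $R_1/p = S[1/\bar I]$ and $R_2/p = S^{\wedge}_{\bar I}[1/\bar I]$, where $S := \underset{\phi}{\text{colim}} A/p$ is the perfection of $A/p$ and $\bar I$ is the image of $I$. The hypothesis that $\phi(I) \bmod p$ is generated by a non-zero divisor passes through the perfection to show that $\bar I$ is generated by a non-zero divisor $d \in S$, so that the classical and derived $\bar I$-adic completions agree. The problem is thereby reduced to showing that base change along $S[1/d] \to S^{\wedge}_d[1/d]$ induces an equivalence of categories of étale $\varphi$-modules, for the perfect $\mathbb{F}_p$-algebra $S$ with non-zero divisor $d$.

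This last step is the technical heart of the argument and the main obstacle. The idea is Frobenius descent: since $\phi$ is an automorphism of $S$ and $\phi(d) = d^p$, iterating the $\varphi$-isomorphism $F : \phi^* N \overset{\sim}{\to} N$ on an étale $\varphi$-module $N$ over $S^{\wedge}_d[1/d]$ contracts any chosen generators into ever smaller $d$-adic neighborhoods of the subring $S[1/d] \subset S^{\wedge}_d[1/d]$, and combined with perfectness this yields a canonical descent of both $N$ and of morphisms between such modules. The delicate point is to make this contraction rigorous: finite projective modules over $S[1/d]$ and over $S^{\wedge}_d[1/d]$ genuinely differ in general (by Beauville--Laszlo gluing), but the rigidity forced by the étale $\varphi$-structure is precisely what cancels this discrepancy. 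Carrying out the iteration requires exploiting both the perfectness of $S$ and the invertibility of $F$ in a delicate way.
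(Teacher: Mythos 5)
Your reduction to characteristic~$p$ is correct in spirit and the identification $R_1/p \cong S[1/d]$, $R_2/p \cong S^{\wedge}_d[1/d]$ with $S := \underset{\phi}{\text{colim}}\ A/p$ agrees with what the paper obtains (the paper instead observes directly that $R_1, R_2$ are $p$-complete perfect $\delta$-rings, hence Witt vectors of their reductions, and invokes Kedlaya--Liu Prop.\ 3.2.7 to convert the problem into a comparison of finite \'etale sites). Up to that point your route is a plausible alternative.

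The gap is in the final step, which you yourself flag as ``the technical heart.'' The proposed ``Frobenius descent'' for comparing $\textbf{ÉM}_{/S[1/d]}$ and $\textbf{ÉM}_{/S^{\wedge}_d[1/d]}$ does not work as described, and I do not see how to salvage it. Since $S$ is perfect, $\phi$ is an automorphism of both $S[1/d]$ and $S^{\wedge}_d[1/d]$ (note $\phi(d) = d^p$, so $\phi^{\pm 1}$ preserve the $d$-adic topology), so iterating $F$ produces no contraction of $S^{\wedge}_d[1/d]$ towards the subring $S[1/d]$: the map $S[1/d] \hookrightarrow S^{\wedge}_d[1/d]$ is $\phi$-equivariant and unchanged under conjugation by powers of $\phi$. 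The situation is genuinely different from Proposition~\ref{perfectionetale}, where the target is a perfection and iteration of $F$ literally exhausts the colimit; here the target is a \emph{completion}, and no finite iteration lands you back in $S[1/d]$. Moreover, you have discarded the one piece of structure the paper actually uses: the intermediate ring $A/p[\frac{1}{I}]$ is a Banach ring (because $A/p$ is $I$-adically complete for a bounded prism), and the paper compares both $S[1/d]$ and $S^{\wedge}_d[1/d]$ to $A/p[\frac{1}{I}]$ via Kedlaya--Liu's theorems that perfection (3.1.15) and uniformization (2.8.16) preserve the finite \'etale site. Trying to compare $S[1/d]$ and $S^{\wedge}_d[1/d]$ directly, without passing through $A/p[\frac{1}{I}]$ or any topological input, loses the hypothesis that $(A,I)$ is a bounded prism and seems to require a statement about arbitrary perfect rings with a nonzerodivisor that I do not believe is true in that generality. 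To repair the proof you would need to reintroduce the deperfected ring $A/p[\frac{1}{I}]$ and the finite-\'etale-site comparisons, at which point you are essentially reproducing the paper's argument.
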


\begin{proof}
We compute 
$(\underset{\phi}{\text{colim}} \ A[\frac{1}{I}]^{\wedge}_p)^{\wedge}_p$
first. Being a $p$-complete perfect $\delta$-ring, we know from \cite{2019arXiv190508229B} corollary 2.31 that 
\[
(\underset{\phi}{\text{colim}} \ A[\frac{1}{I}]^{\wedge}_p)^{\wedge}_p= W((\underset{\phi}{\text{colim}} \ A[\frac{1}{I}]^{\wedge}_p)^{\wedge}_p/p)
=
\]
\[
W((\underset{\phi}{\text{colim}} \ A[\frac{1}{I}]^{\wedge}_p)/p)=
W(\underset{\phi}{\text{colim}} \ (A[\frac{1}{I}]^{\wedge}_p/p))
=
W(\underset{\phi}{\text{colim}} \ (A/p [\frac{1}{I}]))
\]
where we use again the commutation of colimit with tensoring with $\mathbb{Z}/p$. $A$ is $(p,I)$-complete as $(A,I)$ is a bounded prism, so $A/p$ is $I$-adically complete. By \cite{2019arXiv190508229B} lemma 3.6, $\phi(I)A$ is principal, so $\phi(I)  \equiv I^p \ \text{mod} \ p$ is principal which is generated by a non-zero divisor by assumption. It follows that $A/p [\frac{1}{I}]$ is a Tate ring with ring of definition $A/p$. 

On the other hand, 
\[
((\underset{\phi}{\text{colim}} \ A )^{\wedge}_{(p,I)} [\frac{1}{I}])^{\wedge}_p 
=
W((\underset{\phi}{\text{colim}} \ A )^{\wedge}_{(p,I)}/p [\frac{1}{I}])
=
W((\underset{\phi}{\text{colim}} \ A/p )^{\wedge}_{I} [\frac{1}{I}])
\]
by \cite{2019arXiv190508229B} corollary 2.31 again. 

We know that étale $\varphi$-modules over $W(R)$ are equivalent to lisse sheaves on $R$ for a perfect ring $R$ by \cite{kedlaya2015relative} proposition 3.2.7, hence it is enough to compare the finite étale sites of
$(\underset{\phi}{\text{colim}} \ A/p )^{\wedge}_{I} [\frac{1}{I}]$
and
$\underset{\phi}{\text{colim}} \ (A/p [\frac{1}{I}])$. 

We note that 
$(\underset{\phi}{\text{colim}} \ A/p )^{\wedge}_{I} [\frac{1}{I}]$
is the completed perfection of the Tate ring $A/p[\frac{1}{I}]$, and by the following lemma the finite étale site of 
$(\underset{\phi}{\text{colim}} \ A/p )^{\wedge}_{I} [\frac{1}{I}]$
is the same as that of $A/p[\frac{1}{I}]$. But perfection also does not change finite étale site (see \cite{kedlaya2015relative} theorem 3.1.15(a)), hence finite étale site of 
$\underset{\phi}{\text{colim}} \ (A/p [\frac{1}{I}])$
is also identified with 
$A/p[\frac{1}{I}]$. This proves that the finite étale sites of
$(\underset{\phi}{\text{colim}} \ A/p )^{\wedge}_{I} [\frac{1}{I}]$
and
$\underset{\phi}{\text{colim}} \ (A/p [\frac{1}{I}])$
are equivalent via base change (as all intermediate equivalences here are through base change). 
\end{proof} 

\begin{lemma}
Let $R$ be a Banach ring of characteristic $p$ (in the sense of \cite{kedlaya2015relative} definition 2.2.1), then the finite étale site of $(\underset{\phi}{colim} \ R)^{\wedge}$
is equivalent to that of $R$ via base change. 
\end{lemma}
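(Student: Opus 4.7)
The plan is to factor the comparison functor through the uncompleted perfection $S := \underset{\phi}{\text{colim}}\, R$, so that the claim reduces to two independent invariance statements, namely
\[
\textbf{FÉt}(R) \xrightarrow{\sim} \textbf{FÉt}(S) \xrightarrow{\sim} \textbf{FÉt}(\hat S),
\]
where $\hat S = (\underset{\phi}{\text{colim}}\, R)^{\wedge}$. The first arrow is already available: it is Kedlaya--Liu theorem 3.1.15(a), invoked in the proof of theorem \ref{934030598}, which expresses the fact that in characteristic $p$ each Frobenius transition $\phi: R \to R$ is a universal homeomorphism for the étale site, and the filtered colimit preserves this equivalence.

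The second arrow, base change along the completion map $S \to \hat S$, is the substantive step. Since $S$ is perfect, its Banach structure is automatically uniform, so $\hat S$ is a perfect uniform Banach ring (a characteristic-$p$ perfectoid ring in the Kedlaya--Liu sense). To prove equivalence on finite étale sites I would proceed along the standard approximation/descent argument: a finite étale $\hat S$-algebra is, étale-locally, presented by a monic separable polynomial whose discriminant is a unit; because $S$ is dense in $\hat S$ and the units in $\hat S$ form an open subset (uniformness), one can approximate the coefficients by elements of $S$ so that the discriminant of the approximated polynomial remains a unit, producing a finite étale $S$-algebra whose base change is canonically isomorphic to the original. Uniqueness of descent on morphisms follows by the same approximation combined with the lifting of idempotents in the henselian-type setting provided by completion. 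These are precisely the features packaged in Kedlaya--Liu's invariance of finite étale sites under completion of uniform Banach rings, to which I would appeal.

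The main obstacle is the completion step: the perfection step is a formal consequence of Frobenius being a universal homeomorphism, but the completion step requires simultaneously descending both objects and morphisms through a non-finite, merely dense ring map, and controlling the topology carefully enough for étaleness to be preserved under approximation. This is where the hypothesis that $S$ is perfect (equivalently, that $\hat S$ is perfectoid) is essential, since for a generic dense inclusion of Banach rings the corresponding descent fails; perfectness gives the uniform Banach structure that makes the approximation arguments go through.
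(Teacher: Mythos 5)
Your factorization through the uncompleted perfection $S=\operatorname{colim}_{\phi} R$ handles the first arrow correctly: Frobenius induces an equivalence of finite étale sites with no hypothesis on $R$, which is \cite{kedlaya2015relative} theorem 3.1.15(a), the same input the paper uses. The gap is in the second arrow, and it sits exactly in the sentence ``since $S$ is perfect, its Banach structure is automatically uniform.'' This is not justified, and it is the whole difficulty: the lemma imposes no uniformity (or even reducedness) hypothesis on $R$, and the seminorm that $S$ inherits as a colimit of copies of $R$ along $\phi$ (whose transition maps are contractive only on the unit ball) is not obviously power-multiplicative or uniform. Kedlaya--Liu consistently treat ``perfect uniform'' as a compound hypothesis, and their completion-invariance statement for finite étale sites, theorem 3.1.15(b), is stated for uniform Banach rings. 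Without uniformity of $S$ in hand, the approximation argument you sketch (density of $S$ in $\hat{S}$ plus openness of units) is precisely the kind of argument that fails for a general dense inclusion of Banach rings, as you yourself observe; so as written the proof is circular at its key step.

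The paper closes this gap by a different decomposition. It first replaces $R$ by its uniformization $R^{u}$, which has the same finite étale site by \cite{kedlaya2015relative} proposition 2.8.16; it then applies theorem 3.1.15(b) to the genuinely uniform ring $R^{u}$ to compare with $\widehat{(R^{u})_{\mathrm{perf}}}$; and finally --- this is the step your proposal has no substitute for --- it identifies $\widehat{(R^{u})_{\mathrm{perf}}}$ with $\widehat{S}=(\operatorname{colim}_{\phi} R)^{\wedge}$ by writing $R$ as the completed colimit of an affinoid system $R_{i}$, so that $R^{u}$ is the completed colimit of the $R_{i}^{\mathrm{red}}$, and using $T^{\mathrm{red}}_{\mathrm{perf}}=T_{\mathrm{perf}}$. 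To repair your route you must either prove that $S$ with its colimit seminorm is uniform, or insert the uniformization step and supply the comparison $\widehat{S}\cong\widehat{(R^{u})_{\mathrm{perf}}}$; either way the missing content is the passage from $R$ to its reduced/uniform quotient, not the perfectness of $S$.
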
 

\begin{proof}
Let $R^u$ be the uniformization of $R$ as defined in \cite{kedlaya2015relative} definition 2.8.13, then by \cite{kedlaya2015relative} proposition 2.8.16, the finite étale site of $R^u$ is equivalent to that of $R$ under base change. Moreover, by \cite{kedlaya2015relative} theorem 3.1.15 (b), the finite étale sites of $(\underset{\phi}{colim} \ R^u)^{\wedge}$
and $R^u$ are equivalent, so we have the comparison between finite étale sites of $R$ and
$(\underset{\phi}{colim} \ R^u)^{\wedge}$. We claim that 
$(\underset{\phi}{colim} \ R^u)^{\wedge} = (\underset{\phi}{colim} \ R)^{\wedge}$. By \cite{kedlaya2015relative} lemma 2.6.2, there exists an affionid system $R_i$ (see \cite{kedlaya2015relative} definition 2.6.1 for definition) such that $R= (\text{colim} \ R_i)^{\wedge}$, then $R^u= (\text{colim} \ R_i^{\text{red}})^{\wedge}$ by \cite{kedlaya2015relative} corollary 2.5.6. Let $R_{\text{perf}}:= \underset{\phi}{colim} \ R$,
then 
\[
(R_{\text{perf}})^{\wedge} = ((\text{colim} \ R_i)_{\text{perf}})^{\wedge}
=
(\text{colim} \ (R_i)_{\text{perf}})^{\wedge} =
(\text{colim} \ (R_i^{\text{red}})_{\text{perf}})^{\wedge} =
(R^u_{\text{perf}})^{\wedge}
\]
where we use that $T^{\text{red}}_{\text{perf}} = T_{\text{perf}}$ 
for any ring $T$ of characteristic $p$, which can be checked directly. 
\end{proof}

Combining all the equivalences we have established, we have the following theorem.

\begin{theorem} \label{perfectionimp}
With assumptions as in theorem \ref{934030598}, we have an equivalence of categories
\[
\textbf{ÉM}_{/A[\frac{1}{I}]^{\wedge}_p} \overset{\sim}{\longrightarrow} \textbf{ÉM}_{/((\underset{\phi}{\text{colim}} \ A )^{\wedge}_{(p,I)} [\frac{1}{I}])^{\wedge}_p}
\]
induced by base change.
\end{theorem}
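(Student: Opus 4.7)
The plan is simply to compose the two equivalences already established in this section. The ring $A[\frac{1}{I}]^{\wedge}_p$ is $p$-adically complete by construction, so Lemma \ref{uiiiiiiiweuwi} applies with $R = A[\frac{1}{I}]^{\wedge}_p$ and yields, via base change, an equivalence
\[
\textbf{ÉM}_{/A[\frac{1}{I}]^{\wedge}_p} \overset{\sim}{\longrightarrow} \textbf{ÉM}_{/(\underset{\varphi}{\text{colim}} \ A[\frac{1}{I}]^{\wedge}_p)^{\wedge}_p}.
\]
Here the transition $\varphi$ on $A[\frac{1}{I}]^{\wedge}_p$ is induced by the prism Frobenius $\phi$ on $A$ (which stabilizes $(A[\frac{1}{I}]^{\wedge}_p$ since $\phi(I)$ contains a power of $I$ up to a unit, so $\phi$ extends continuously), so the colimit on the right agrees with the one appearing in Theorem \ref{934030598}.

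Next I would apply Theorem \ref{934030598}, which under the stated hypothesis on $\phi(I) \bmod p$ gives a base change equivalence
\[
\textbf{ÉM}_{/(\underset{\phi}{\text{colim}} \ A[\frac{1}{I}]^{\wedge}_p)^{\wedge}_p} \overset{\sim}{\longrightarrow} \textbf{ÉM}_{/((\underset{\phi}{\text{colim}} \ A )^{\wedge}_{(p,I)} [\frac{1}{I}])^{\wedge}_p}.
\]
Composing this with the previous equivalence, and noting that both are induced by base change along the evident ring maps, produces the desired equivalence. The composition is again induced by base change because each intermediate functor is, and the composition of ring maps
\[
A[\tfrac{1}{I}]^{\wedge}_p \longrightarrow (\underset{\phi}{\text{colim}} \ A[\tfrac{1}{I}]^{\wedge}_p)^{\wedge}_p \longrightarrow ((\underset{\phi}{\text{colim}} \ A )^{\wedge}_{(p,I)} [\tfrac{1}{I}])^{\wedge}_p
\]
is exactly the canonical map.

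There is no genuinely new content here — the whole section has been set up precisely so that this final statement falls out by a two-step composition. The only subtlety worth flagging is to verify that the transition maps $\varphi$ used in Lemma \ref{uiiiiiiiweuwi} (applied to $R = A[\frac{1}{I}]^{\wedge}_p$) match the Frobenius $\phi$ used in Theorem \ref{934030598}, so that the intermediate category $\textbf{ÉM}_{/(\underset{\varphi}{\text{colim}} \ A[\frac{1}{I}]^{\wedge}_p)^{\wedge}_p}$ is literally the same on both sides; this is immediate from the fact that the $\varphi$ in the étale $\varphi$-module formalism is by definition the one induced from the $\delta$-ring Frobenius on $A$. Hence no substantive obstacle arises, and the proof is a short concatenation of Lemma \ref{uiiiiiiiweuwi} and Theorem \ref{934030598}.
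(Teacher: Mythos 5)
Your proposal is correct and is essentially identical to the paper's own argument: the paper proves this theorem precisely by concatenating Lemma \ref{uiiiiiiiweuwi} (applied to $R = A[\frac{1}{I}]^{\wedge}_p$ with its induced Frobenius) with Theorem \ref{934030598}, both equivalences being given by base change along the evident ring maps. The point you flag about $\phi$ extending to $A[\frac{1}{I}]^{\wedge}_p$ and the two colimits agreeing is the only thing to check, and your justification of it is fine.
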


\section{$(\varphi,\Gamma)$-modules and prismatic $F$-crystals}

In this section, we interpret $(\varphi, \Gamma)$-modules in terms of prismatic $F$-crystals. We recover the equivalence between Galois representations and $(\varphi, \Gamma)$-modules using the new interpretation. 

Let us first recall the definition of $(\varphi, \Gamma)$-modules. Let $K$ be a finite totally ramified extension of $W(k)[\frac{1}{p}]$, and $\bold{A}_K^+$ be as in  section \ref{prismsection}. Recall that $\bold{A}_K \subset W(\mathbb{C}^{\flat})$ is stable by the canonical Frobenius lifting $\phi$ and the action of the Galois group Gal($\bar{K}/K$) on 
$W(\mathcal{O}_{\mathbb{C}}^{\flat})$. Moreover, the action factorizes through 
\[
\Gamma:=\text{Gal}(K(\zeta_{p^{\infty}})/K).
\]

\begin{definition}
A $(\varphi, \Gamma)$-module over $\bold{A}_K$ is an étale $\varphi$-module $M$ over
$\bold{A}_K$ (with respect to the $\phi$-structure on $\bold{A}_K$), i.e. a finite projective $\bold{A}_K$-module $M$ equipped with an isomorphism
\[
F: \phi^*M \cong M,
\]
together with an action of
$\Gamma$
on $M$ that commutes with $F$, and semilinear with respect to the action of $\Gamma$ on $\bold{A}_K$. 
\end{definition}

We have the following theorem.

\begin{theorem} \label{phigamma}
The category of prismatic F-crystals in $\mathcal{O}_{\prism}[\frac{1}{I_{\prism}}]^{\wedge}_p$-modules over $Spf(\mathcal{O}_K)$
is equivalent to the category of $(\varphi, \Gamma)$-modules over 
$\bold{A}_K$.
\end{theorem}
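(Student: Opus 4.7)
The plan is to evaluate a prismatic F-crystal $\mathcal{M}$ at the cover $(\bold{A}_K^+, (\phi^n([p]_q)))$ and realize the resulting descent datum as a continuous semilinear $\Gamma$-action. Concretely, the functor from prismatic F-crystals to $(\varphi,\Gamma)$-modules is $\mathcal{M} \mapsto \mathcal{M}(\bold{A}_K^+, (\phi^n([p]_q)))$: this is a finite projective $\bold{A}_K$-module, using the identification $\bold{A}_K^+[\tfrac{1}{\phi^n([p]_q)}]^{\wedge}_p = \bold{A}_K$ (since $\phi^n([p]_q) \equiv (q-1)^{p^n(p-1)} \bmod p$); it carries the $\varphi$-structure from $\mathcal{M}$; and by functoriality together with Corollary \ref{corollaryii}, which identifies $\Gamma = \mathrm{Gal}(K(\zeta_{p^\infty})/K)$ with the automorphism group of $(\bold{A}_K^+, (\phi^n([p]_q)))$ in $(\mathcal{O}_K)_{\prism}$, it acquires a semilinear $\Gamma$-action commuting with $\varphi$.

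To prove that this functor is an equivalence, I would use descent. By Lemma \ref{coverfinal}, our prism covers the final object of $(\mathcal{O}_K)_{\prism}$, so by Proposition 3.3 together with Example \ref{examplewww}, a prismatic F-crystal is equivalent to a finite projective étale $\varphi$-module $M$ over $\bold{A}_K$ equipped with a descent datum
\[
\beta: M \otimes_{\bold{A}_K} C[\tfrac{1}{J}]^{\wedge}_p \xrightarrow{\sim} C[\tfrac{1}{J}]^{\wedge}_p \otimes_{\bold{A}_K} M
\]
satisfying the standard cocycle on the threefold self-product. The task reduces to showing that specifying such a $\beta$ is the same as specifying a continuous semilinear $\Gamma$-action on $M$ commuting with $\varphi$.

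The key step is to pass to perfections. By Theorem \ref{perfectionimp}, base change induces an equivalence between étale $\varphi$-modules over $\bold{A}_K$ and over the perfection $((\underset{\phi}{\mathrm{colim}}\, \bold{A}_K^+)^{\wedge}_{(p,I)}[\tfrac{1}{I}])^{\wedge}_p \cong W(\mathcal{O}_L^\flat)[\tfrac{1}{I}]^{\wedge}_p$, where $L := K(\zeta_{p^\infty})^\wedge$ by Lemma \ref{9900000000}; applying this uniformly to $(\bold{A}_K^+, (\phi^n([p]_q)))$ and to $(C,J)$, the datum $\beta$ is equivalent to descent data in the perfect prism world. By the Bhatt--Scholze equivalence between perfect prisms and integral perfectoid rings, the perfection of $(C,J)$ corresponds to $\mathcal{O}_L \hat\otimes_{\mathcal{O}_K} \mathcal{O}_L$. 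Since $L/K$ is a pro-finite Galois extension with group $\Gamma$ and $\mathcal{O}_L/\mathcal{O}_K$ is a pro-étale $\underline{\Gamma}$-torsor in the perfectoid sense, one obtains a canonical identification
\[
\mathcal{O}_L \hat\otimes_{\mathcal{O}_K} \mathcal{O}_L \;\cong\; \mathrm{Cts}(\Gamma, \mathcal{O}_L),
\]
and correspondingly the perfected self-product ring in the prismatic setting is a ring of continuous functions $\Gamma \to W(\mathcal{O}_L^\flat)[\tfrac{1}{I}]^{\wedge}_p$. Under this identification, a cocycle-compatible descent datum becomes a continuous semilinear $\Gamma$-action (the threefold cocycle becoming associativity), and naturality ensures this $\Gamma$-action coincides with the one built from Corollary \ref{corollaryii}, closing the loop.

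The main obstacle I anticipate is the identification $\mathcal{O}_L \hat\otimes_{\mathcal{O}_K} \mathcal{O}_L \cong \mathrm{Cts}(\Gamma, \mathcal{O}_L)$ and a careful formulation of continuity for the resulting $\Gamma$-action. Two viable routes are: (i) argue diamantinely that $\mathrm{Spd}(L) \to \mathrm{Spd}(K)$ is a pro-étale $\underline{\Gamma}$-torsor, so that the self-product in diamonds is $\mathrm{Spd}(L) \times \underline{\Gamma}$ and descent along it encodes continuous $\Gamma$-actions; or (ii) reduce to the characteristic-$p$ statement about the tilt $\mathcal{O}_L^\flat/\mathcal{O}_K^\flat$—a pro-finite Galois extension of perfectoid fields where the self-product identification is standard—and then lift through $W(-)$. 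Assuming the identification, all other steps are formal consequences of the descent and perfection equivalences already established in the paper.
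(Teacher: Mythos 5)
Your overall strategy matches the paper's: reduce a prismatic F-crystal to an \'etale $\varphi$-module over $\bold{A}_K$ with a descent datum over $C[\tfrac{1}{J}]^{\wedge}_p$, pass to perfections via Theorem~\ref{perfectionimp}, and identify the perfected self-product ring with continuous functions on $\Gamma$. Your ``route (i)'' through diamonds is exactly what the paper's lemma after Theorem~\ref{phigamma} carries out, invoking that $\text{Spd}(K,\mathcal{O}_K)$ is the pro-\'etale quotient of $\text{Spa}(F,F^+)^{\diamond}$ by $\Gamma$ and the full faithfulness of the diamond functor.

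However, two steps in your proposal as written do not hold. First, the perfection of $(C,J)$ does \emph{not} correspond, under the perfect-prism/integral-perfectoid equivalence, to $\mathcal{O}_L\hat\otimes_{\mathcal{O}_K}\mathcal{O}_L$: since $\mathcal{O}_K$ is not perfectoid, this completed tensor product is generally not integral perfectoid. What one gets is the \emph{initial} integral perfectoid $\mathcal{O}_K$-algebra receiving two $\mathcal{O}_K$-maps from $\mathcal{O}_L$ (i.e.\ a perfectoidization of that tensor product), and the putative integral identification with $\mathrm{Cts}(\Gamma,\mathcal{O}_L)$ is only an ``almost'' statement. This is precisely why the paper first passes to the Tate ring $B/I[\tfrac{1}{p}]$ using Propositions~\ref{integralperfectoid} and~\ref{perfectoidtate}, proves $B/I[\tfrac{1}{p}]\cong C^0(\Gamma,F)$ via diamonds, and only then tilts and takes Witt vectors; it never asserts $B/I\cong\mathrm{Cts}(\Gamma,\mathcal{O}_L)$. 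Second, your ``route (ii)'' does not make sense: $\mathcal{O}_K$ is a mixed-characteristic DVR, not perfectoid, so $\mathcal{O}_K^{\flat}$ is undefined in the relevant sense, and $(K(\zeta_{p^\infty})^{\wedge})^{\flat}$ is \emph{purely inseparable} over the field of norms $\bold{E}_K$ rather than Galois with group $\Gamma$, so there is no characteristic-$p$ torsor to lift through $W(-)$. You should commit to route (i), working at the rational level as the paper does, and the argument goes through.
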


\begin{proof}
By example \ref{examplewww}, we see that a prismatic F-crystal in $\mathcal{O}_{\prism}[\frac{1}{I_{\prism}}]^{\wedge}_p$-modules over $Spf(\mathcal{O}_K)$ is an étale $\varphi$-module over 
$\bold{A}_K$
together with an isomorphism
\[
M\otimes_{\bold{A}_K} C [\frac{1}{J}]^{\wedge}_p
\cong 
C [\frac{1}{J}]^{\wedge}_p 
\otimes_{\bold{A}_K} M
\]
of étale $\varphi$-modules over 
$C [\frac{1}{J}]^{\wedge}_p$
satisfying cocycle conditions, 
with 
\[
(C, J)=
(\bold{A}_K^+ \hat{\otimes}_{W(k)} \bold{A}_K^+
 \{\frac{\omega \otimes 1- 1\otimes\omega}{\phi^n([p]_q) \otimes 1}, \frac{1\otimes\phi^n([p]_q)}{\phi^n([p]_q) \otimes 1} \}^{\wedge}, (\phi^n([p]_q) \otimes 1))
 \]
as in example \ref{examplewww}. Then   theorem \ref{perfectionimp} implies that this is equivalent (via base change) to an étale $\varphi$-module $\mathcal{M}$ over 
\[
(\bold{A}_K^+)_{\text{perf}}[\frac{1}{J}]^{\wedge}_p \cong
A_{\text{inf}}(\mathcal{O}_{K(\zeta_{p^{\infty}})^{\wedge}_p})[\frac{1}{J}]^{\wedge}_p \cong 
W((K(\zeta_{p^{\infty}})^{\wedge}_p)^{\flat})
\]
together with an isomorphism  
\[
\mathcal{M}\otimes_{W((K(\zeta_{p^{\infty}})^{\wedge}_p)^{\flat})} B[\frac{1}{I}]^{\wedge}_p 
\cong
B[\frac{1}{I}]^{\wedge}_p 
\otimes_{W((K(\zeta_{p^{\infty}})^{\wedge}_p)^{\flat})} \mathcal{M}
\]
as étale $\varphi$-modules over 
$B[\frac{1}{I}]^{\wedge}_p$, where 
$(B,I)=(C,J)_{\text{perf}}$. 
Then the lemma below tells us that the descent data is equivalent to an action of $\Gamma$ on $\mathcal{M}$ that is semilinear with respect to the action of $\Gamma$ on 
$W((K(\zeta_{p^{\infty}})^{\wedge}_p)^{\flat})$. As the action of $\Gamma$ is already defined on 
$\bold{A}_K$, 
theorem \ref{perfectionimp} tells us that this is equivalent to an action of
$\Gamma$ on $M$ that is semilinear with respect to the action of $\Gamma$ on
$\bold{A}_K$, which is exactly a $(\varphi, \Gamma)$-module over 
$\bold{A}_K$.
\end{proof}

\begin{lemma}
Let $(B, I)$ be
the perfection of the prism
$ (
C,J)$
in lemma \ref{89999}, then 
\[
B[\frac{1}{I}]^{\wedge}_p \cong C^0(\Gamma, W((K(\zeta_{p^{\infty}})^{\wedge}_p)^{\flat}))
\]
where 
$C^0(\Gamma, W((K(\zeta_{p^{\infty}})^{\wedge}_p)^{\flat}))$
is the ring of continuous functions on $\Gamma$ with values in 
$ W((K(\zeta_{p^{\infty}})^{\wedge}_p)^{\flat})$. 
Moreover, the two structure maps from $W((K(\zeta_{p^{\infty}})^{\wedge}_p)^{\flat})$
to $C^0(\Gamma, W((K(\zeta_{p^{\infty}})^{\wedge}_p)^{\flat}))$
are the obvious constant function map, and the one sending $x \in W((K(\zeta_{p^{\infty}})^{\wedge}_p)^{\flat})$ 
to $\{\gamma \rightarrow \gamma(x) \}$. 
\end{lemma}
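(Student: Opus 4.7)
The plan is to identify $(B,I) = (C,J)_{\text{perf}}$ as the self-coproduct of $(\bold{A}_K^+,(\phi^n([p]_q)))_{\text{perf}}$ in the category of perfect prisms over $\text{Spf}(\mathcal{O}_K)$, translate this via the Bhatt--Scholze equivalence of perfect prisms with integral perfectoid rings, and finish the computation using classical Galois theory for the cyclotomic tower. First I will invoke that perfection is a left adjoint to the inclusion of perfect prisms into bounded prisms, hence preserves coproducts; thus the presentation of $(C,J)$ in Lemma \ref{89999} as a self-coproduct yields that $(B,I)$ is the corresponding self-coproduct in perfect prisms. Combined with Lemma \ref{9900000000}, this identifies $B/I$ with the self-coproduct of $\mathcal{O}_{K(\zeta_{p^\infty})^{\wedge}}$ in integral perfectoid $\mathcal{O}_K$-algebras.

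Next I will invert a pseudouniformizer, using Proposition \ref{integralperfectoid}, to pass to the perfectoid Tate world, where the self-coproduct becomes the completed tensor product $K(\zeta_{p^\infty})^{\wedge} \hat{\otimes}_K K(\zeta_{p^\infty})^{\wedge}$ in perfectoid Tate $K$-algebras. Writing $K(\zeta_{p^\infty}) = \text{colim}_n K(\zeta_{p^n})$ and applying the standard Galois-theoretic identification $L \otimes_K L \cong \prod_{\sigma \in \text{Gal}(L/K)} L$ (sending $a \otimes b$ to $(\sigma \mapsto a\sigma(b))$) at each finite layer, then taking the colimit over $n$ and $p$-adically completing, I expect to obtain
\[
K(\zeta_{p^\infty})^{\wedge} \hat{\otimes}_K K(\zeta_{p^\infty})^{\wedge} \cong C^0(\Gamma, K(\zeta_{p^\infty})^{\wedge}).
\]
Under this isomorphism the two structure maps $x \mapsto x\otimes 1$ and $x \mapsto 1\otimes x$ become the constant function map and $x \mapsto (\gamma \mapsto \gamma(x))$ respectively, matching the description claimed in the lemma.

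Finally, to return to the prismatic side, I will apply the standard formula $B[\frac{1}{I}]^{\wedge}_p \cong W(((B/I)[\frac{1}{\varpi}])^{\flat})$ valid for any perfect prism (with $\varpi$ a pseudouniformizer of $B/I$), yielding $B[\frac{1}{I}]^{\wedge}_p \cong W(C^0(\Gamma, K(\zeta_{p^\infty})^{\wedge})^{\flat})$. Since $\Gamma$ is profinite, $C^0(\Gamma, -)$ is a cofiltered limit of finite products, and both tilting and the Witt vector functor commute with finite products and with $p$-adic inverse limits; therefore they commute with $C^0(\Gamma, -)$, giving the desired identification $B[\frac{1}{I}]^{\wedge}_p \cong C^0(\Gamma, W((K(\zeta_{p^\infty})^{\wedge})^{\flat}))$. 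The main obstacle will be the Galois identification in the middle paragraph: verifying that the $p$-adic completion of $\text{colim}_n (K(\zeta_{p^n}) \otimes_K K(\zeta_{p^n}))$ is exactly $C^0(\Gamma, K(\zeta_{p^\infty})^{\wedge})$ depends on density of locally constant functions in all continuous functions on the compact profinite group $\Gamma$, a uniform continuity argument. A secondary subtlety is ensuring that the coproduct in integral perfectoid rings matches the coproduct in perfectoid Tate rings after inversion, for which Proposition \ref{integralperfectoid} is the essential input.
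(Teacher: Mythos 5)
Your skeleton agrees with the paper's up to the decisive middle step: you correctly reduce, via the universal property of perfection and the equivalence of perfect prisms with integral perfectoid rings, to identifying the self-coproduct of $F := K(\zeta_{p^{\infty}})^{\wedge}$ in perfectoid Tate $K$-algebras, and your final Witt-vector step is essentially the paper's (though $C^0(\Gamma,-)$ is not a cofiltered limit of finite products; the paper instead argues directly from $W(R)=R^{\mathbb{N}}$ with polynomial structure maps and $R^{\flat}=\lim_{x\mapsto x^p}R$). The gap is in the middle: you compute the coproduct as the naive completed tensor product $F\hat{\otimes}_K F$ and identify it with $C^0(\Gamma,F)$ by finite-level Galois theory plus density of locally constant functions. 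This fails, for two intertwined reasons. First, since $K$ is not perfectoid, the completed tensor product of perfectoid Tate $K$-algebras need not be perfectoid (or even uniform), so it need not represent the coproduct in perfectoid Tate $K$-algebras. Second, and concretely, the isomorphism $K(\zeta_{p^n})\otimes_K K(\zeta_{p^n})\cong\prod_{\sigma}K(\zeta_{p^n})$ does not pass to the completion: the idempotents $e_{\sigma}$ (the characteristic functions of cosets $\sigma\Gamma_n$) have tensor-product norm bounded below by $|\mathfrak{d}_{K(\zeta_{p^n})/K}|^{-1}$, which tends to $\infty$ along the cyclotomic tower because the different is unbounded (equivalently, because $\|\mathrm{tr}_{K(\zeta_{p^n})/K}\|\to 0$, the phenomenon behind Tate's normalized traces). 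So the sup-norm and tensor-product topologies on $F\otimes_K F$ are inequivalent, the map $F\hat{\otimes}_K F\to C^0(\Gamma,F)$ has dense image but is not a topological isomorphism, and your density/uniform-continuity argument cannot close the gap.

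What is actually needed here is the statement that $\mathrm{Spa}(F,F^+)^{\diamond}\to\mathrm{Spd}(K,\mathcal{O}_K)$ is a pro-\'etale $\underline{\Gamma}$-torsor, i.e.\ $\mathrm{Spd}(K,\mathcal{O}_K)=\mathrm{Spa}(F,F^+)^{\diamond}/\Gamma$ (\cite{scholze2020berkeley}, lemma 10.1.7). This is the route the paper takes: the universal property of $B/I[\frac{1}{p}]$ is read off on perfectoid test objects, so it computes the fiber product of diamonds $\mathrm{Spa}(F,F^+)^{\diamond}\times_{\mathrm{Spd}(K,\mathcal{O}_K)}\mathrm{Spa}(F,F^+)^{\diamond}\cong\mathrm{Spa}(F,F^+)^{\diamond}\times\underline{\Gamma}$, which is affinoid perfectoid represented by $C^0(\Gamma,F)$, and full faithfulness of $X\mapsto X^{\diamond}$ on perfectoid spaces then pins down $B/I[\frac{1}{p}]$. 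The torsor property is proved by tracking where the roots of unity $\zeta_{p^n}$ go under the two structure maps (a locally constant datum), not by a tensor-product computation; some input of this kind is unavoidable, and your proposal is missing it.
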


\begin{proof}
Let
\[
(F,F^+) :=( K(\zeta_{p^{\infty}})^{\wedge}_p, \mathcal{O}_{K(\zeta_{p^{\infty}})^{\wedge}_p}).
\]
By lemma \ref{89999}, $(B,I)$ is the initial perfect prism in $\text{Spf}(\mathcal{O}_K)_{\prism}$ equipped with two arrows from $(\bold{A}_K^+, (\phi^n([p]_q)))$ into it.  This is the same as the initial perfect prism in $\text{Spf}(\mathcal{O}_K)_{\prism}$ with two arrows from 
\[
(\bold{A}_K^+, (\phi^n([p]_q)))_{\text{perf}} = (A_{\text{inf}}(F^+), \text{Ker}(\theta)),
\]
where we use lemma \ref{9900000000}. Since perfect prisms are equivalent to integral perfectoid rings, we see that $B/I$ is the initial integral perfectoid $\mathcal{O}_K$-algebra with two maps (as $\mathcal{O}_K$-algebras) from 
$F^+$. 
We claim that $B/I[\frac{1}{p}]$ is the initial perfectoid Tate $K$-algebra with two maps from 
$F$. This follows immediately form proposition \ref{integralperfectoid} and proposition \ref{perfectoidtate}. 

Now for any perfectoid Tate $K$-algebra
$A$ with two maps from 
$F$, 
we obtain two maps 
from
$\text{Spa}(A,A^{\circ})$
to 
$\text{Spa}(F,F^+)$, 
as perfectoid spaces over $\text{Spa}(K, \mathcal{O}_K)$ .
We view them as diamonds over $\text{Spd}(K,\mathcal{O}_K)$. Since diamonds is determined by their values on (affinoid) perfectoid test objects,
the previous paragraph shows that 
\[
\text{Spa}(B/I[\frac{1}{p}], B/I[\frac{1}{p}]^+)^{\diamond}
=
\text{Spa}(F,F^+)^{\diamond} \times_{\text{Spd}(K,\mathcal{O}_K)}
\text{Spa}(F,F^+)^{\diamond}
\]
for some ring of definition $B/I[\frac{1}{p}]^+$, which can be identified as the image of $B/I$ in $B/I[\frac{1}{p}]$,
providing we know the right hand side is affinoid perfectoid. But it is well-known that 
\[
\text{Spd}(K,\mathcal{O}_K)=
\text{Spa}(F,F^+)^{\diamond}/\Gamma,
\]
see \cite{scholze2020berkeley} lemma 10.1.7 for example,  
so 
\[
\text{Spa}(F,F^+)^{\diamond} \times_{\text{Spd}(K,\mathcal{O}_K)}
\text{Spa}(F,F^+)^{\diamond} \cong \text{Spa}(F,F^+)^{\diamond} \times \underline{\Gamma}
\cong
\text{Spa}(C^0(\Gamma,F), C^0(\Gamma,F^+))^{\diamond}
\]
where 
$C^0(\Gamma,F)$, resp. $C^0(\Gamma,F^+)$,
is the ring of continuous functions on $\Gamma$ with values in $F$, resp. $F^+$, see \cite{2017arXiv170907343S} example 11.12 for the last isomorphism. We then have
\[
B/I[\frac{1}{p}] \cong  C^0(\Gamma,F).
\]
as the functor from perfectoid spaces over $\text{Spa}(K,\mathcal{O}_K)$ to diamonds over $\text{Spd}(K,\mathcal{O}_K)$ is fully faithful. The two structure maps from $F$ to $C^0(\Gamma,F)$ is then the constant function map, and the one sending $x \in F$ to $\{\gamma \rightarrow \gamma(x)\}$, which is easy to see by chasing through the above canonical isomorphisms. 
Now
\[
B[\frac{1}{I}]^{\wedge}_p=
W((B/I)^{\flat}[\frac{1}{\omega}])=
W((B/I[\frac{1}{p}])^{\flat})
\]
for a uniformizer $\varpi = (a_0, a_1, \cdots) \in (B/I)^{\flat}$
which we can choose so that $a_0 $ divides $p$ in $B/I$. Thus
\[
B[\frac{1}{I}]^{\wedge}_p 
\cong 
W(C^0(\Gamma,F)^{\flat}) \cong
W(C^0(\Gamma,F^{\flat}))
\cong
C^0(\Gamma, W(F^{\flat}))
\]
where the second isomorphism follows directly from the description $F^{\flat} = \lim_{x \rightarrow x^p} F$, and the same descrition for $C^0(\Gamma,F)^{\flat}$. The last isomorphism follows from the concrete description of the Witt vector, namely $W(F^{\flat}) = (F^{\flat})^{\mathbb{N}}$. This clearly commutes with taking continuous functions. Moreover, the ring structure is defined by polynomial equations that is independent of the input ring, whence the last isomorphism. The description of the two structure maps is straightforward by chasing through the various canonical isomorphisms. 
\end{proof}

\begin{remark} \label{ppoqwwqwee}
The action of $\Gamma$ can be directly  detected as follows. For a prismatic F-crystal $M$ over $\mathcal{O}_K$, and $\gamma \in \Gamma$. The action of $\gamma$ on $M((\bold{A}_K^+,(\phi^n([p]_q))))$ 
is induced by the base change isomorphism (the crystal structure)
\[
M((\bold{A}_K^+,(\phi^n([p]_q)))) \otimes_{\bold{A}_K, \gamma}
\bold{A}_K
\cong
M((\bold{A}_K^+,(\phi^n([p]_q))))
\]
corresponding to the arrow
\[
\gamma: (\bold{A}_K^+,(\phi^n([p]_q)))
\overset{\sim}{\longrightarrow }
(\bold{A}_K^+,(\phi^n([p]_q)))
\]
in $(\mathcal{O}_K)_{\prism}$ as described in the proof of corollary \ref{corollaryii}.
\end{remark}

\begin{remark} \label{wqpirei}
The proof also shows that prismatic $F$-crystals in $\mathcal{O}_{\prism}[\frac{1}{I_{\prism}}]^{\wedge}_p$-modules over $\text{Spf}(\mathcal{O}_K)$ are equivalent to $F$-crystals in $\mathcal{O}_{\prism}[\frac{1}{I_{\prism}}]^{\wedge}_p$-modules over $\text{Spf}(\mathcal{O}_K)_{\prism}^{\text{perf}}$, i.e. the site of perfect prisms over $Spf(\mathcal{O}_K)$, and the equivalence is induced by the obvious restriction functor. Indeed,   prismatic $F$-crystals in $\mathcal{O}_{\prism}[\frac{1}{I_{\prism}}]^{\wedge}_p$-modules over $\text{Spf}(\mathcal{O}_K)$ (resp. $\text{Spf}(\mathcal{O}_K)_{\prism}^{\text{perf}}$) are equivalent to $\varphi$-modules over 
$\bold{A}_K^+[\frac{1}{I}]^{\wedge}_p$
together with descent data over $C[\frac{1}{I}]^{\wedge}_p$ (resp. $\varphi$-modules over 
$(\bold{A}_K^+)_{perf}[\frac{1}{I}]^{\wedge}_p$
together with descent data over $C_{perf}[\frac{1}{I}]^{\wedge}_p$), and the proof shows that the latter objects are equivalent. 
\end{remark}

With exactly the same idea, we can recover Galois representations from prismatic F-crystals in $\mathcal{O}_{\prism}[\frac{1}{I_{\prism}}]^{\wedge}_p$-modules. As it is along the same reasoning as above, we only sketch the argument. 

\begin{theorem}
The category of prismatic F-crystals in $\mathcal{O}_{\prism}[\frac{1}{I_{\prism}}]^{\wedge}_p$-modules over $Spf(\mathcal{O}_K)$
is equivalent to the category of  finite free continuous $\mathbb{Z}_p$-representations of $G:= \text{Gal}(\overline{K}/K)$.
\end{theorem}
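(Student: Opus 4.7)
The plan is to mimic the proof of Theorem \ref{phigamma}, replacing the intermediate prism $(\bold{A}_K^+, (\phi^n([p]_q)))$ with the perfect prism $(W(\mathcal{O}_{\mathbb{C}}^{\flat}), \text{Ker}(\theta))$, where $\mathbb{C} := \widehat{\overline{K}}$ is the completed algebraic closure. First I would verify that this perfect prism lies in $(\mathcal{O}_K)_{\prism}$ (via the inclusion $\mathcal{O}_K \hookrightarrow \mathcal{O}_{\mathbb{C}}$) and covers the final object, by the same quasisyntomic argument used for Lemma \ref{coverfinal} with $\mathcal{O}_{\mathbb{C}}$ in place of $\mathcal{O}_K[\zeta_{p^{\infty}}]^{\wedge}_p$. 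By Remark \ref{wqpirei}, a prismatic $F$-crystal in $\mathcal{O}_{\prism}[\frac{1}{I}]^{\wedge}_p$-modules is determined by its restriction to the perfect prismatic site, so it corresponds to a finite projective étale $\varphi$-module over $W(\mathcal{O}_{\mathbb{C}}^{\flat})[\frac{1}{\text{Ker}(\theta)}]^{\wedge}_p = W(\mathbb{C}^{\flat})$ together with descent data over the self-product of this cover.

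Next I would compute the self-product using the same diamond-theoretic argument as in the lemma following Theorem \ref{phigamma}. Using the identification $\text{Spd}(K, \mathcal{O}_K) \cong \text{Spa}(\mathbb{C}, \mathcal{O}_{\mathbb{C}})^{\diamond}/\underline{G}$ (cf. \cite{scholze2020berkeley} lemma 10.1.7), one obtains
\[
\text{Spa}(\mathbb{C}, \mathcal{O}_{\mathbb{C}})^{\diamond} \times_{\text{Spd}(K, \mathcal{O}_K)} \text{Spa}(\mathbb{C}, \mathcal{O}_{\mathbb{C}})^{\diamond} \cong \text{Spa}(\mathbb{C}, \mathcal{O}_{\mathbb{C}})^{\diamond} \times \underline{G},
\]
which via the perfectoid/Witt vector dictionary translates to a ring-theoretic identification of the $p$-completed localization of the self-product with $C^0(G, W(\mathbb{C}^{\flat}))$; the two structure maps are the constant-function embedding and $x \mapsto \{\sigma \mapsto \sigma(x)\}$. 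Descent data over this ring is therefore equivalent to a continuous semilinear action of $G$ on the étale $\varphi$-module commuting with $\varphi$.

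Finally I would appeal to the classical Artin-Schreier-Witt equivalence between finite projective étale $\varphi$-modules over $W(\mathbb{C}^{\flat})$ and finite free $\mathbb{Z}_p$-modules, implemented by the functors $M \mapsto M^{\varphi=1}$ and $V \mapsto V \otimes_{\mathbb{Z}_p} W(\mathbb{C}^{\flat})$; this relies on $\mathbb{C}^{\flat}$ being algebraically closed, hence having trivial étale fundamental group. The equivalence is manifestly $G$-equivariant, so it transports a continuous semilinear $G$-action on $M$ to an honest continuous $\mathbb{Z}_p$-linear $G$-action on the finite free $\mathbb{Z}_p$-module $M^{\varphi=1}$, producing the desired equivalence with finite free continuous $\mathbb{Z}_p$-representations of $G$.

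The main obstacle I anticipate is the ring-level realization of the self-product and the matching of continuity conditions: promoting the clean diamond-level identity to a genuine identification $C^0(G, W(\mathbb{C}^{\flat}))$ at the level of perfect prisms, and checking that the descent data then record precisely the \emph{continuous} semilinear $G$-actions (so that the profinite topology on $G$ is faithfully captured by the continuous-function ring). The final Artin-Schreier-Witt step is standard but also requires a careful continuity check to ensure the topology on $M^{\varphi=1}$ inherited from $W(\mathbb{C}^{\flat})$ yields a continuous $\mathbb{Z}_p$-representation.
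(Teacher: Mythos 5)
Your proposal matches the paper's own proof essentially step for step: restrict to the perfect prismatic site via Remark \ref{wqpirei}, take $(A_{\text{inf}}(\mathcal{O}_{\mathbb{C}}), \text{Ker}(\theta))$ as a cover of the final object, identify the self-product with $C^0(G, W(\mathbb{C}^{\flat}))$ by the same diamond-theoretic computation based on $\text{Spd}(K,\mathcal{O}_K) \cong \text{Spa}(\mathbb{C},\mathcal{O}_{\mathbb{C}})^{\diamond}/\underline{G}$, read off the descent data as a continuous semilinear $G$-action, and finish with the Artin--Schreier--Witt equivalence over the algebraically closed $\mathbb{C}^{\flat}$ (\cite{kedlaya2015relative} proposition 3.2.7). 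The anticipated obstacles you flag (ring-level realization of the self-product and the continuity bookkeeping) are real but are handled by the same lemma technique used for the $(\varphi,\Gamma)$ case, so there is no gap.
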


\begin{proof}
Let
$\mathbb{C}:=\hat{\overline{K}}$
and $\mathcal{O}_{\mathbb{C}}$
be the ring of integers of 
it. By remark \ref{wqpirei}, it enough to work in $\text{Spf}(\mathcal{O}_K)_{\prism}^{\text{perf}}$. 
We can evaluate a prismatic F-crystal at $(A_{\text{inf}}(\mathcal{O}_{\mathbb{C}}), \text{Ker}(\theta))$. 
Let $(B,J)$ be  the product
\[
(A_{\text{inf}}(\mathcal{O}_{\mathbb{C}}), \text{Ker}(\theta)) \times
(A_{\text{inf}}(\mathcal{O}_{\mathbb{C}}), \text{Ker}(\theta))
\]
in $\text{Spf}(\mathcal{O}_K)^{\text{perf}}_{\prism}$, and we need to compute 
\[
B[\frac{1}{J}]^{\wedge}_p = W((B/J[\frac{1}{p}])^{\flat}).
\]
We know that
$B/J[\frac{1}{p}]$ is a perfectoid Tate $K$-algebra and can be interpreted as 
\[
\text{Spa}(B/J[\frac{1}{p}], B/J[\frac{1}{p}]^+)^{\diamond} =  \text{Spd}(\mathbb{C},\mathcal{O}_{\mathbb{C}}) \times_{\text{Spd}(K,\mathcal{O}_K)} 
\text{Spd}(\mathbb{C},\mathcal{O}_{\mathbb{C}}).
\]
We know that 
\[
\text{Spd}(\mathbb{C},\mathcal{O}_{\mathbb{C}}) \times_{\text{Spd}(K,\mathcal{O}_K)} 
\text{Spd}(\mathbb{C},\mathcal{O}_{\mathbb{C}}) \cong 
\text{Spd}(\mathbb{C},\mathcal{O}_{\mathbb{C}}) \times \underline{G} =
\text{Spd}(C^0(G, \mathbb{C}),C^0(G, \mathcal{O}_{\mathbb{C}})),
\]
so 
\[
B[\frac{1}{J}]^{\wedge}_p \cong 
C^0(G, W(\mathbb{C}^{\flat})). 
\]
 With the help of theorem \ref{perfectionimp}, this proves that a prismatic F-crystal is the same as an 
 étale $\varphi$-module over $W(\mathbb{C}^{\flat})$ together with a  $G$-action which is semilinear with respect to the action of $G$ on $W(\mathbb{C}^{\flat})$. Now as $\mathbb{C}^{\flat}$ is algebraically closed, it is well-known that the category of étale $\varphi$-modules over $W(\mathbb{C}^{\flat})$
 is equivalent to the category of finite free $\mathbb{Z}_p$-modules via taking $F$-invariants, see \cite{kedlaya2015relative} proposition 3.2.7 for example. This shows that the prismatic F-crystals are equivalent to finite free $\mathbb{Z}_p$-representations of $G$.
\end{proof}

\begin{remark} \label{wwepiroeio}
Similarly as before, for a prismatic F-crystal $M$, the $G$-action on 
$M((A_{\text{inf}}(\mathcal{O}_{\mathbb{C}}), \text{Ker}(\theta)))$
is induced by the base change isomorphism (the crystal structure)
\[
M((A_{\text{inf}}(\mathcal{O}_{\mathbb{C}}), \text{Ker}(\theta))) \otimes_{W(\mathbb{C}^{\flat}),g} W(\mathbb{C}^{\flat})
\cong
M((A_{\text{inf}}(\mathcal{O}_{\mathbb{C}}), \text{Ker}(\theta)))
\]
with respect to the arrow
\[
g: (A_{\text{inf}}(\mathcal{O}_{\mathbb{C}}), \text{Ker}(\theta))
\overset{\sim}{\longrightarrow}
(A_{\text{inf}}(\mathcal{O}_{\mathbb{C}}), \text{Ker}(\theta))
\]
in $(\mathcal{O}_K)_{\prism}$. 
\end{remark}

\begin{corollary} \label{mxxxxxxx}
The category of $(\varphi, \Gamma)$-modules over $\bold{A}_K$
is equivalent to the category of finite free $\mathbb{Z}_p$-representations of $G = \text{Gal}(\overline{K}/K)$. The equivalence functors are
\[
\mathcal{M} \rightarrow (\mathcal{M} \otimes_{\bold{A}_K} 
W(\mathbb{C}^{\flat}) )^{F\otimes \phi=1}
\]
\[
T \rightarrow 
(T \otimes_{\mathbb{Z}_p} W(\mathbb{C}^{\flat}) )^{H=1} \overset{\text{deperfection}}{\longrightarrow} (\cdot) 
\]
where $\mathcal{M}$ is a $(\varphi, \Gamma)$-module over $\bold{A}_K$, $T$ is a finite free $\mathbb{Z}_p$-representation of $G$, and \[
H:=Gal(\overline{K}/K(\zeta_{p^{\infty}})) \subset G.
\]
The action of  $G$ is diagonal on both 
$T \otimes_{\mathbb{Z}_p} W(\mathbb{C}^{\flat})$
and
$\mathcal{M} \otimes_{\bold{A}_K} 
W(\mathbb{C}^{\flat})$,
where $G$ acts on $\mathcal{M}$ through the canonical quotient $G \rightarrow \Gamma$. 
The $\varphi$-structure on 
$T \otimes_{\mathbb{Z}_p} W(\mathbb{C}^{\flat})$
is defined by $\phi$ on the second factor. Moreover, the deperfection functor is the equivalence from the category of $(\varphi, \Gamma)$-modules over $W(K(\zeta_{p^{\infty}})^{\flat})$
to the category of $(\varphi, \Gamma)$-modules over $\bold{A}_K$, as induced from theorem \ref{perfectionimp}.
\end{corollary}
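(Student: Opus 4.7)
The plan is to obtain the equivalence by composing the two previously established equivalences: the category of prismatic $F$-crystals in $\mathcal{O}_{\prism}[\frac{1}{I}]^{\wedge}_p$-modules over $\mathrm{Spf}(\mathcal{O}_K)$ is equivalent on the one hand to $(\varphi,\Gamma)$-modules over $\bold{A}_K$ (Theorem \ref{phigamma}) and on the other to finite free continuous $\mathbb{Z}_p$-representations of $G$ (the preceding theorem). The resulting functor between $(\varphi,\Gamma)$-modules and Galois representations will a priori be an equivalence; the substance of the corollary is to identify it with the explicit formulas given in the statement. Throughout, I would use Remark \ref{wqpirei} to freely pass between $F$-crystals on $(\mathcal{O}_K)_{\prism}$ and on its perfect subsite, since this is what allows us to write both sides as values of a single crystal at the two distinguished prisms $(\bold{A}_K^+,(\phi^n([p]_q)))_{\mathrm{perf}}$ and $(A_{\mathrm{inf}}(\mathcal{O}_{\mathbb{C}}),\mathrm{Ker}(\theta))$.

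For the functor $\mathcal{M}\mapsto T$, I would trace a $(\varphi,\Gamma)$-module $\mathcal{M}$ through the two equivalences. By Theorem \ref{phigamma} the associated $F$-crystal has value $\mathcal{M}$ at $(\bold{A}_K^+,(\phi^n([p]_q)))$, and its value at $(A_{\mathrm{inf}}(\mathcal{O}_{\mathbb{C}}),\mathrm{Ker}(\theta))$ is computed via the crystal structure along the canonical map $\bold{A}_K^+\to A_{\mathrm{inf}}(\mathcal{O}_{\mathbb{C}})$ (inside $W(\mathbb{C}^{\flat})$). After inverting $I$ and $p$-completing, this value is exactly $\mathcal{M}\otimes_{\bold{A}_K}W(\mathbb{C}^{\flat})$. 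By Remarks \ref{ppoqwwqwee} and \ref{wwepiroeio}, the Galois action on either evaluation is induced by the base change isomorphism along automorphisms of the prism; chasing through the maps $G\twoheadrightarrow\Gamma=\mathrm{Aut}(\bold{A}_K^+,(\phi^n([p]_q)))$ and $G=\mathrm{Aut}(A_{\mathrm{inf}}(\mathcal{O}_{\mathbb{C}}),\mathrm{Ker}(\theta))$, the $G$-action on $\mathcal{M}\otimes_{\bold{A}_K}W(\mathbb{C}^{\flat})$ is diagonal, with $G$ acting on $\mathcal{M}$ through $G\to\Gamma$ and on $W(\mathbb{C}^{\flat})$ in the obvious way. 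Applying Kedlaya--Liu \cite{kedlaya2015relative} Proposition 3.2.7 to recover the representation as $F$-invariants then yields the formula $T=(\mathcal{M}\otimes_{\bold{A}_K}W(\mathbb{C}^{\flat}))^{F\otimes\phi=1}$.

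For the inverse functor $T\mapsto\mathcal{M}$, I would first use the proof of the preceding theorem to identify the $F$-crystal attached to $T$: its value at $(A_{\mathrm{inf}}(\mathcal{O}_{\mathbb{C}}),\mathrm{Ker}(\theta))$ after inverting $I$ and $p$-completing is $T\otimes_{\mathbb{Z}_p}W(\mathbb{C}^{\flat})$, with $G$ acting diagonally. To descend to $\bold{A}_K$, I would proceed in two steps. First, take $H$-invariants: since $H=\mathrm{Gal}(\overline{K}/K(\zeta_{p^{\infty}}))$ acts trivially on the quotient $K(\zeta_{p^\infty})^{\wedge}_p$, the continuous Galois descent (implicit in Theorem \ref{phigamma}'s proof through the identification $B[\tfrac{1}{I}]^{\wedge}_p\cong C^0(\Gamma,W((K(\zeta_{p^{\infty}})^{\wedge}_p)^{\flat}))$) produces a $(\varphi,\Gamma)$-module $(T\otimes_{\mathbb{Z}_p}W(\mathbb{C}^{\flat}))^{H=1}$ over $W(K(\zeta_{p^{\infty}})^{\wedge,\flat}_p)$. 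Second, apply the deperfection equivalence from Theorem \ref{perfectionimp}, which is exactly the passage from $F$-crystals over the perfect site to $F$-crystals over the full site on the $(\bold{A}_K^+,(\phi^n([p]_q)))$-slice, to land inside $\bold{A}_K$.

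The main obstacle, and the step that warrants the most care, is the identification of the Galois action with the one coming from the crystal structure, since the comparison between the intrinsic $G$-action on $W(\mathbb{C}^{\flat})$ and the descent-theoretic action coming from the isomorphism $B[\tfrac{1}{I}]^{\wedge}_p\cong C^0(G,W(\mathbb{C}^{\flat}))$ is precisely where the two different descriptions of the automorphism group of the relevant prism must be matched. Once this is done, the verification that the two compositions are naturally isomorphic to the identity follows by restricting $G$-equivariance along $H$ and composing with the deperfection functor, which is formal from Theorems \ref{phigamma} and \ref{perfectionimp} together with Remark \ref{wqpirei}.
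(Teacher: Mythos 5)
Your overall strategy is exactly the paper's: realize both categories as values of a single prismatic $F$-crystal at two distinguished prisms and trace an object through, using Remark \ref{wqpirei}, Remarks \ref{ppoqwwqwee}/\ref{wwepiroeio} for the Galois actions, and Theorem \ref{perfectionimp} for the deperfection step. Where you gloss over a real point is the claim that the value at $(A_{\mathrm{inf}}(\mathcal{O}_{\mathbb{C}}),\mathrm{Ker}(\theta))$ is ``exactly $\mathcal{M}\otimes_{\bold{A}_K}W(\mathbb{C}^{\flat})$'' after base-changing along the inclusion $\bold{A}_K^+\hookrightarrow A_{\mathrm{inf}}(\mathcal{O}_{\mathbb{C}})$. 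That inclusion sends the ideal $(\phi^n([p]_q))$ to $(\phi^n([p]_q))\subset A_{\mathrm{inf}}(\mathcal{O}_{\mathbb{C}})$, which is $\phi^{n+1}(\mathrm{Ker}(\theta))$, \emph{not} $\mathrm{Ker}(\theta)$; so to get an arrow in $(\mathcal{O}_K)_{\prism}$ with target $(A_{\mathrm{inf}}(\mathcal{O}_{\mathbb{C}}),\mathrm{Ker}(\theta))$ you must post-compose with $\phi^{-n-1}$. The crystal base change therefore first produces $\mathcal{M}\otimes_{\bold{A}_K}W(\mathbb{C}^{\flat})\otimes_{W(\mathbb{C}^{\flat}),\phi^{-n-1}}W(\mathbb{C}^{\flat})$, and you then need the extra input (from the proof of Proposition \ref{perfectionetale}) that for any \'etale $\varphi$-module $N$ over $A$ there is a canonical identification $N\otimes_{A,\phi^{n+1}}A\cong N$, compatible with morphisms, to remove the twist and also to track the $\Gamma$- and $G$-actions through it. The paper flags exactly this as the hidden ingredient used repeatedly. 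Your paragraph on matching the descent-theoretic $G$-action with the intrinsic one correctly names the second subtle point; but without acknowledging the $\phi$-twist, the asserted identification of the value at $(A_{\mathrm{inf}}(\mathcal{O}_{\mathbb{C}}),\mathrm{Ker}(\theta))$ is not yet justified, and the naturality of the untwisting is needed again in the $T\mapsto\mathcal{M}$ direction and in checking the two functors are quasi-inverse. Apart from this omission, your route coincides with the paper's.
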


\begin{proof}
Both categories are equivalent to prismatic F-crystals in  $\mathcal{O}_{\prism}[\frac{1}{I_{\prism}}]^{\wedge}_p$-modules. We check the equivalence functors are given by the stated ones. Given a prismatic F-crystal $M$, the associated $(\varphi, \Gamma)$-module is 
$M((\bold{A}_K^+, (\phi^n([p]_q))))$, while the associated Galois representation is
$M((A_{\text{inf}}(\mathcal{O}_{\mathbb{C}}), \text{Ker}(\theta)))^{\varphi =1}$. 
$M$ being a crystal, we have a canonical identification
\begin{equation} \label{poowooow}
M((A_{\text{inf}}(\mathcal{O}_{\mathbb{C}}), \text{Ker}(\theta))) \cong 
M((\bold{A}_K^+, (\phi^n([p]_q)))) \otimes_{\bold{A}_K}
W(\mathbb{C}^{\flat}) \otimes_{W(\mathbb{C}^{\flat}), \phi^{-n-1}} W(\mathbb{C}^{\flat})
\end{equation}
using the arrow
\[
(\bold{A}_K^+, (\phi^n([p]_q)))
\rightarrow 
(A_{\text{inf}}(\mathcal{O}_{\mathbb{C}}),(\phi^n([p]_q)))
\overset{\phi^{-n-1}}{\longrightarrow}
(A_{\text{inf}}(\mathcal{O}_{\mathbb{C}}), \text{Ker}(\theta)) 
\]
in $\text{Spf}(\mathcal{O}_K)_{\prism}$. We have seen in the proof of proposition \ref{perfectionetale} that the base change along $\phi^{n+1}$ does not affect étale $\varphi$-modules, namely for any étale $\varphi$-module $N$ over $A$ with respect to $\phi:A \rightarrow A$, there is a canonical identification $N\otimes_{A,\phi^{n+1}} A \cong N$
of étale $\varphi$-modules over $A$. Hence we have a canonical idenfication 
\begin{equation} \label{eewwwww}
M((A_{\text{inf}}(\mathcal{O}_{\mathbb{C}}), \text{Ker}(\theta))) \cong 
M((\bold{A}_K^+, (\phi^n([p]_q)))) \otimes_{\bold{A}_K}
W(\mathbb{C}^{\flat})
\end{equation}
of étale $\varphi$-modules over $W(\mathbb{C}^{\flat})$. 
Thus the functor from  
$(\varphi, \Gamma)$-modules to Galois representations is of the expected form, we still need to identify the Galois action. This follows easily from remarks \ref{wwepiroeio} and \ref{ppoqwwqwee}, as the Galois action on both sides of (\ref{poowooow}) are induced by base change, while the identification (\ref{poowooow}) itself is also induced by base change. An easy base change computation together with the observation that the action of $G$ on 
$\bold{A}_K^+$ through the inclusion 
$\bold{A}_K^+ 
\rightarrow A_{\text{inf}}(\mathcal{O}_{\mathbb{C}})$
is via the quotient $G \rightarrow \Gamma$ proves the compatibility of Galois action in (\ref{poowooow}). Then the naturality of the identification $N\otimes_{A,\phi^{n+1}} A \cong N$
gives us the desired description of Galois action on both sides of (\ref{eewwwww}). 

On the other hand, we have a canonical map
\[
(A_{\text{inf}}(\mathcal{O}_{K(\zeta_{p^{\infty}})_p^{\wedge}}), \text{Ker}(\theta)) \longrightarrow
(A_{\text{inf}}(\mathcal{O}_{\mathbb{C}}), \text{Ker}(\theta))/H 
\]
in the site $(\mathcal{O}_K)_{\prism}^{\text{perf}}$, which is not necessarily an isomorphism, but becomes so on the structure sheaf $\mathcal{O}_{\prism}[\frac{1}{I_{\prism}}]^{\wedge}_p$. Indeed, this follows from that $\mathbb{C}$ has (continuous) Galois group $H$ over $K(\zeta_p^{\infty})^{\wedge}_p$ and the tilting equivalence of perfectoid fields. 
This tells us that 
\[
M((A_{\text{inf}}(\mathcal{O}_{K(\zeta_{p^{\infty}})_p^{\wedge}}), \text{Ker}(\theta))) \cong 
M((A_{\text{inf}}(\mathcal{O}_{\mathbb{C}}), \text{Ker}(\theta)))^{H=1}.
\]
Moreover, by looking at the descent data of the $F$-crystal, the $\Gamma$-action on the right hand side is carried to the descent data of the left hand side. 
Then theorem \ref{perfectionimp} applied to $\bold{A}_K$ gives the other direction. Note that the categorical equivalence  in theorem \ref{perfectionimp} implies that base change preserves not only the $\varphi$-structure, but also  $\Gamma$-action, thereby inducing an equivalence of $(\varphi, \Gamma)$-modules. 
\end{proof}

\begin{remark}
The fact that twist by $\phi^{n+1}$ does not change  étale $\varphi$-modules and has all the expected functoriality is used secretly throughout the above proof. For example, it is needed in checking the two functors are quasi-inverse to each other. All that says is that we can ignore the issue caused by twisting by $\phi$. 
\end{remark}

The equivalence functors in the above corollary may look different from the treatment one usually finds in the literature. We now check that they are equivalent. 

Let $\bold{A}$ be the $p$-adic completion of the maximal unramified extension of $\bold{A}_K$ inside $W(\mathbb{C}^{\flat})$, i.e. $\bold{A}$ is the Cohen ring of $\bold{E}_K^{\text{sep}}$, the separable closure of $\bold{E}_K$, which lies inside $W(\mathbb{C}^{\flat})$ and extends $\bold{A}_K$. It is stable by $\phi$ and the action of the Galois group $G$. If we can write $\bold{A}$ as $A[\frac{1}{I}]^{\wedge}_p$ for some prism $(A,I)$, then we can repeat the above argument with $W(\mathbb{C}^{\flat})$ replaced by $\bold{A}$, and deduce the usual description of equivalence functors between Galois representations and $(\varphi,\Gamma)$-modules. However, this is not possible since $\bold{E}_K^{\text{sep}}$
is not complete. Instead, we proceed with the following lemma.

\begin{lemma} \label{llllllllll}
There is an equivalence of categories
\[
\textbf{ÉM}_{/\bold{A}} \overset{\sim}{\longrightarrow}
\textbf{ÉM}_{/W(\mathbb{C}^{\flat})}
\]
induced by base change.
\end{lemma}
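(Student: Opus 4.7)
The plan is to factor the base-change map as
\[
\bold{A} \longrightarrow (\underset{\phi}{\text{colim}} \ \bold{A})^{\wedge}_p \longrightarrow W(\mathbb{C}^{\flat}),
\]
and show that each arrow induces an equivalence of categories of étale $\varphi$-modules. The first arrow is immediately handled by Lemma \ref{uiiiiiiiweuwi}, so the real content lies in analyzing the second.

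First I would identify the intermediate ring. Mimicking the opening of the proof of Theorem \ref{934030598} and using \cite{2019arXiv190508229B} corollary 2.31 together with commutativity of colimits with reduction modulo $p$, one obtains
\[
(\underset{\phi}{\text{colim}} \ \bold{A})^{\wedge}_p \cong W(\underset{\phi}{\text{colim}} \ \bold{E}_K^{\text{sep}}).
\]
Since any algebraic extension of a perfect field of characteristic $p$ is automatically separable, the perfection of $\bold{E}_K^{\text{sep}}$ coincides with the full algebraic closure $\bold{E}_K^{\text{alg}} \subset \mathbb{C}^{\flat}$. Thus the intermediate ring is $W(\bold{E}_K^{\text{alg}})$, and the remaining map is the natural inclusion $W(\bold{E}_K^{\text{alg}}) \hookrightarrow W(\mathbb{C}^{\flat})$ induced by completion.

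To finish, I would observe that both $\bold{E}_K^{\text{alg}}$ and $\mathbb{C}^{\flat}$ are algebraically closed perfect fields of characteristic $p$ (for $\mathbb{C}^{\flat}$ this follows from the tilting equivalence applied to the algebraically closed $\mathbb{C}$, exactly as used at the end of the preceding theorem). By \cite{kedlaya2015relative} proposition 3.2.7, for such an $R$ the category of étale $\varphi$-modules over $W(R)$ is equivalent via $M \mapsto M^{F=1}$ to the category of finite free $\mathbb{Z}_p$-modules, with inverse $T \mapsto T \otimes_{\mathbb{Z}_p} W(R)$. The base change intertwines the two instances of this equivalence: if $M$ is an étale $\varphi$-module over $W(\bold{E}_K^{\text{alg}})$ then $M \cong M^{F=1} \otimes_{\mathbb{Z}_p} W(\bold{E}_K^{\text{alg}})$ base changes to $M^{F=1} \otimes_{\mathbb{Z}_p} W(\mathbb{C}^{\flat})$, whose $F$-invariants recover $M^{F=1}$ since $W(\mathbb{C}^{\flat})^{F=1} = \mathbb{Z}_p$. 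The only steps requiring attention are the identification of the perfection of $\bold{E}_K^{\text{sep}}$ as $\bold{E}_K^{\text{alg}}$ and the recognition that $\mathbb{C}^{\flat}$ is algebraically closed; once these are in place, the equivalence is formal and no deep étale descent is needed.
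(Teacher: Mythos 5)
Your proposal is correct and follows essentially the same route as the paper: identify $(\underset{\phi}{\mathrm{colim}}\ \bold{A})^{\wedge}_p$ with $W(\bold{E}_K^{\text{alg}})$ via \cite{2019arXiv190508229B} corollary 2.31, apply Lemma \ref{uiiiiiiiweuwi} for the first leg, and then use that étale $\varphi$-modules over the Witt vectors of an algebraically closed field are equivalent to finite free $\mathbb{Z}_p$-modules via $F$-invariants (\cite{kedlaya2015relative} proposition 3.2.7) for the second. Your extra remark that base change intertwines the two $F$-invariant equivalences is a useful explicit check that the paper leaves implicit, and the identification $\underset{\phi}{\mathrm{colim}}\ \bold{E}_K^{\text{sep}}=\bold{E}_K^{\text{alg}}$ is the standard fact that the perfect closure of a separably closed field is algebraically closed.
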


\begin{proof}
By \cite{2019arXiv190508229B} corollary 2.31,
\[
(\underset{\phi}{\text{colim}} \ \bold{A})^{\wedge}_p \cong
W(\underset{\phi}{\text{colim}} \ \bold{A}/p)
\cong
W(\underset{\phi}{\text{colim}} \ \bold{E}_K^{\text{sep}})
\cong
W(\bold{E}_K^{\text{alg}}),
\]
then lemma \ref{uiiiiiiiweuwi} implies that 
\[
\textbf{ÉM}_{/\bold{A}} \overset{\sim}{\longrightarrow}
\textbf{ÉM}_{/W(\bold{E}_K^{\text{alg}})}.
\]
As both $\bold{E}_K^{\text{alg}}$ and $\mathbb{C}^{\flat}$ are algebraically closed, étale $\varphi$-modules over $W(\bold{E}_K^{\text{alg}})$
and $W(\mathbb{C}^{\flat})$
are both equivalent to finite free $\mathbb{Z}_p$-modules by taking  $F$-invariants, we have the  equivalence  
\[
\textbf{ÉM}_{/W(\bold{E}_K^{\text{alg}})} \overset{\sim}{\longrightarrow}
\textbf{ÉM}_{/W(\mathbb{C}^{\flat})}.
\]
Combining the two we have the desired equivalence. 
\end{proof}

\begin{theorem}
The category of $(\varphi, \Gamma)$-modules over $\bold{A}_K$
is equivalent to the category of finite free $\mathbb{Z}_p$-representations of $G = \text{Gal}(\overline{K}/K)$. The equivalence functors are
\[
\mathcal{M} \rightarrow (\mathcal{M} \otimes_{\bold{A}_K} 
\bold{A})^{F \otimes \phi=1}
\]
\[
T \rightarrow 
(T \otimes_{\mathbb{Z}_p} \bold{A} )^{H=1} 
\]
where $\mathcal{M}$ is a $(\varphi, \Gamma)$-module over $\bold{A}_K$, $T$ is a finite free $\mathbb{Z}_p$-representation of $G$, and \[
H:=Gal(\overline{K}/K(\zeta_{p^{\infty}})) \subset G.
\]
The action of  $G$ is diagonal on both 
$T \otimes_{\mathbb{Z}_p} \bold{A}$
and
$\mathcal{M} \otimes_{\bold{A}_K} 
\bold{A}$,
where $G$ acts on $\mathcal{M}$ through the canonical quotient $G \rightarrow \Gamma$. 
The $\varphi$-structure on 
$T \otimes_{\mathbb{Z}_p} \bold{A}$ 
is defined by $\phi$
on the second factor. 
\end{theorem}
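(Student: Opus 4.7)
The plan is to reduce this to Corollary \ref{mxxxxxxx}. The corollary already establishes the equivalence, but with functors expressed through $W(\mathbb{C}^{\flat})$ (and a deperfection step); we need to show the functors in the present statement, expressed via $\bold{A}$, coincide with those of the corollary, using the $G$-equivariant and $\varphi$-equivariant inclusion $\bold{A} \hookrightarrow W(\mathbb{C}^{\flat})$.

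For the forward functor, Lemma \ref{llllllllll} establishes that base change gives an equivalence $\textbf{ÉM}_{/\bold{A}} \overset{\sim}{\to} \textbf{ÉM}_{/W(\mathbb{C}^{\flat})}$. Composing with the equivalence of $\textbf{ÉM}_{/W(\mathbb{C}^{\flat})}$ with finite free $\mathbb{Z}_p$-modules via $F$-invariants, we obtain an equivalence $\textbf{ÉM}_{/\bold{A}} \simeq \{\text{finite free } \mathbb{Z}_p\text{-modules}\}$ whose direct description is $N \mapsto N^{F=1}$, with inverse $T \mapsto T \otimes_{\mathbb{Z}_p} \bold{A}$. In particular, the natural map $N^{F=1} \to (N \otimes_{\bold{A}} W(\mathbb{C}^{\flat}))^{F \otimes \phi = 1}$ is an isomorphism for any such $N$. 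Applied to $N = \mathcal{M} \otimes_{\bold{A}_K} \bold{A}$, this identifies the forward functor here with the one in Corollary \ref{mxxxxxxx}, with the $G$-action on both sides being diagonal and matched by the naturality and $G$-equivariance of $\bold{A} \subset W(\mathbb{C}^{\flat})$. Hence the forward functor is an equivalence.

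For the reverse functor, $(T \otimes_{\mathbb{Z}_p} \bold{A})^{H=1}$ is naturally a $(\varphi, \Gamma)$-module over $\bold{A}^H = \bold{A}_K$, where the equality follows from $(\bold{E}_K^{\text{sep}})^H = \bold{E}_K$ (field of norms) together with the fact that $\bold{A}^H$ and $\bold{A}_K$ are both $p$-adically complete $p$-torsion-free subrings of $W(\mathbb{C}^{\flat})$ with the same mod-$p$ reduction. Since the forward functor is already known to be an equivalence, it suffices to check that the composition on the $(\varphi,\Gamma)$-module side is naturally isomorphic to the identity. Setting $T := (\mathcal{M} \otimes_{\bold{A}_K} \bold{A})^{F \otimes \phi = 1}$, the triviality of étale $\varphi$-modules over $\bold{A}$ (i.e.\ the inverse $T \mapsto T \otimes_{\mathbb{Z}_p} \bold{A}$) provides a $G$-equivariant isomorphism $T \otimes_{\mathbb{Z}_p} \bold{A} \cong \mathcal{M} \otimes_{\bold{A}_K} \bold{A}$; taking $H$-invariants recovers $\mathcal{M}$, since $\mathcal{M}$ has trivial $H$-action (its $\Gamma$-action factors through $G/H$), $\mathcal{M}$ is finite projective over $\bold{A}_K$, and $\bold{A}^H = \bold{A}_K$.

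The principal verification throughout is the $G$-equivariance of the natural isomorphisms identifying invariants over $\bold{A}$ with those over $W(\mathbb{C}^{\flat})$; this is automatic from naturality and the $G$-equivariance of the inclusions $\bold{A}_K \subset \bold{A} \subset W(\mathbb{C}^{\flat})$, so no genuine new obstacle arises beyond the machinery already developed in Lemma \ref{llllllllll} and Corollary \ref{mxxxxxxx}.
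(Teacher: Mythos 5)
Your proposal is correct and follows essentially the same route as the paper: reduce to Corollary \ref{mxxxxxxx} by means of Lemma \ref{llllllllll} and the triviality of étale $\varphi$-modules over $\bold{A}$, so that $N^{F=1}\cong (N\otimes_{\bold{A}}W(\mathbb{C}^{\flat}))^{F\otimes\phi=1}$. The only divergence is in the reverse direction, where you check directly that the composite on the $(\varphi,\Gamma)$-module side is the identity (using $\bold{A}^{H}=\bold{A}_K$ and finite projectivity of $\mathcal{M}$), whereas the paper instead matches the stated formula against the quasi-inverse of Corollary \ref{mxxxxxxx} through $W(\mathbb{C}^{\flat})$ and the deperfection functor; both rest on the same inputs and your bookkeeping is, if anything, slightly cleaner.
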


\begin{proof}
By lemma \ref{llllllllll}, the functor 
\[
\mathcal{M} \rightarrow (\mathcal{M} \otimes_{\bold{A}_K} 
W(\mathbb{C}^{\flat}) )^{F\otimes \phi=1}
\]
in corollary \ref{mxxxxxxx}
is the same as 
\[
\mathcal{M} \rightarrow (\mathcal{M} \otimes_{\bold{A}_K} 
\bold{A})^{F \otimes \phi=1}.
\]
Indeed, as étale $\varphi$-modules over $W(\mathbb{C}^{\flat})$ are all isomorphic to the trivial ones $(W(\mathbb{C}^{\flat})^n, \phi)$, lemma \ref{llllllllll} tells us that étale $\varphi$-modules  over $\bold{A}$ are also of the form $(\bold{A}^n, \phi)$, so taking $F$-invariants produces the same finite free $\mathbb{Z}_p$-modules, namely 
\[
(N \otimes_{\bold{A}} W(\mathbb{C}^{\flat}))^{F=1} \cong N^{F=1}
\]
for any étale $\varphi$-module $N$  over $\bold{A}$.  
This gives the identification of the $\mathbb{Z}_p$-modules, the identification of Galois actions also follows since the equivalence  in lemma \ref{llllllllll} is a categorical one, so arrows corresponding to Galois action are also preserved. 

Conversely, let $\mathcal{N}$ be the image of the functor
\[
T \rightarrow 
(T \otimes_{\mathbb{Z}_p} W(\mathbb{C}^{\flat}) )^{H=1} \overset{\text{deperfection}}{\rightarrow} (\cdot) 
\]
in corollary \ref{mxxxxxxx}. Then by definition, $\mathcal{N} $ is a $(\varphi,\Gamma)$-module over $\bold{A}_K$ such that 
\[
\mathcal{N} \otimes_{\bold{A}_K} W((K(\zeta_{p^{\infty}})^{\wedge})^{\flat})
\cong (T \otimes_{\mathbb{Z}_p} W(\mathbb{C}^{\flat}) )^{H=1}.
\]
Since the functor is the quasi-inverse of
$
\mathcal{M} \rightarrow (\mathcal{M} \otimes_{\bold{A}_K} 
W(\mathbb{C}^{\flat}) )^{F \otimes \phi=1},
$
we have 
\[
\mathcal{N} \otimes_{\bold{A}_K} 
W(\mathbb{C}^{\flat}) \cong
T \otimes_{\mathbb{Z}_p} W(\mathbb{C}^{\flat}),
\]
whence 
\[
(\mathcal{N} \otimes_{\bold{A}_K} \bold{A}) \otimes_{\bold{A}}
W(\mathbb{C}^{\flat}) \cong
(T \otimes_{\mathbb{Z}_p} \bold{A}) \otimes_{\bold{A}} W(\mathbb{C}^{\flat}).
\]
Then lemma \ref{llllllllll} gives us 
\[
\mathcal{N} \otimes_{\bold{A}_K} \bold{A} \cong
T \otimes_{\mathbb{Z}_p} \bold{A},
\]
so
\[
\mathcal{N} \cong (T \otimes_{\mathbb{Z}_p} \bold{A})^{H=1}. 
\]

We have now proved that the functors in corollary \ref{mxxxxxxx} is the same as the ones described in the statement of the theorem, so we can conclude using corollary \ref{mxxxxxxx}. 
\end{proof}

\bibliographystyle{unsrt}
\bibliography{afd}

\end{document}